\crefname{hypothesis}{Hypothesis}{Hypotheses}
\title{Fast-convergent two-level restricted additive Schwarz methods based on optimal local approximation spaces
}
\author{Arne Strehlow\thanks{Institute for Applied Mathematics and Interdisciplinary Center for Scientific Computing, Heidelberg University, Im Neuenheimer Feld 205, Heidelberg 69120, Germany (\email{arne.strehlow@uni-heidelberg.de, r.scheichl@uni-heidelberg.de})}
\and Chupeng Ma\thanks 
    {School of Sciences, Great Bay University, Dongguan 523000, China; Great Bay Institute for Advanced Study, Dongguan 523000, China (\email{chupeng.ma@gbu.edu.cn}).}    
 \and Robert Scheichl\footnotemark[1] 
}
\DeclareMathOperator{\supp}{supp}
\theoremstyle{remark}
\theoremstyle{definition}\newtheorem{example}{Example}
\theoremstyle{condition}
\newcommand{\rate}{\Lambda}
\newcommand{\tforall}{\text{ for all }}
\newcommand{\richIterate}{v}
 \newcommand{\richIterateVector}{{\bf v}}
\newcommand{\normEquivLow}{b_1}
\newcommand{\normEquivHigh}{b_2}
\begin{document}

\maketitle
%
\begin{abstract}
This paper proposes a two-level restricted additive Schwarz (RAS) method for multiscale PDEs, built on top of a multiscale spectral generalized finite element method (MS-GFEM). The method uses coarse spaces constructed from optimal local approximation spaces, which are based on local eigenproblems posed on (discrete) harmonic spaces. We rigorously prove that the method, used as an iterative solver or as a preconditioner for GMRES, converges at a rate of $\Lambda$, where $\Lambda$ represents the error of the underlying MS-GFEM. The exponential convergence property of MS-GFEM, which is indepdendent of the fine mesh size $h$ even for highly oscillatory and high contrast coefficients, thus guarantees convergence in a few iterations with a small coarse space. We develop the theory in an abstract framework, and demonstrate its generality by applying it to various elliptic problems with highly heterogeneous coefficients, including $H({\rm curl})$ elliptic problems. The performance of the proposed method is systematically evaluated and illustrated via applications to two and three dimensional heterogeneous PDEs, including challenging elasticity problems in realistic composite aero-structures.
\end{abstract}
\begin{keywords}
domain decomposition methods, multiscale methods, Schwarz methods, two-level methods, coarse spaces
\end{keywords}

\begin{AMS}
65F10, 65N22, 65N30, 65N55
\end{AMS}

%
%
%
\section{Introduction} 
Partial differential equations (PDEs) with highly heterogeneous coefficients arise in various practical applications, including subsurface flow in porous media, geodynamic processes, and engineered composite materials. Solving these PDEs directly using standard numerical methods typically necessitates a very fine mesh to resolve the small material heterogeneities represented by the coefficients. This approach results in large-scale, ill-conditioned algebraic equations. Furthermore, important engineering applications -- such as optimal design, sensitivity analysis, and reverse modeling -- require solving problems multiple times for different loadings and/or local structure modifications, rendering direct numerical simulations computationally infeasible. Over the past two decades, considerable efforts have been made to develop efficient numerical methods for multiscale PDE problems. Two notable and closely related classes of methods in this regard are multiscale/localized model reduction methods, which aim to construct effective reduced models, and domain decomposition methods for the iterative solution of discretized PDE problems.


There exists a vast literature on multiscale/localized model reduction methods. While a variety of methods have been developed, recent advancements have focused on coarse space based methods that do not require scale separation of multiscale problems. These methods, often formulated within the variational framework, aim to design a low-dimensional approximation space using problem-adapted, localized basis functions. To obtain provable convergence guarantees, the local bases are typically identified by solving carefully-designed local problems tailored to the underlying differential operators. Once the coarse approximation space is established, the resulting reduced model can be used for multiple simulations, yielding substantial computational savings. These methods include (generalized) multiscale finite element methods ((G)MsFEM) \cite{chung2018constraint,calo2016randomized,efendiev2013generalized,hou1997multiscale,hou1999convergence}, localized orthogonal decomposition (LOD) and its variants \cite{hauck2023super,maalqvist2014localization,freese2024super}, component mode synthesis based methods \cite{hetmaniuk2010special,hetmaniuk2014error,heinlein2015approximate}, rough polyharmonic splines \cite{owhadi2014polyharmonic}, gamblet based numerical homogenization \cite{owhadi2017gamblets,owhadi2019operator}, and ArbiLoMod \cite{buhr2017arbilomod,buhr2018randomized}, to name a few. We refer to \cite{altmann2021numerical,chungmultiscale,efendiev2009multiscale,maalqvist2020numerical} for comprehensive reviews. On the other hand, domain decomposition methods aim at building efficient preconditioners based on solving independent local problems. To achieve scalability and robustness for these preconditioners, it is essential to incorporate a suitable coarse space (coarse problem). In recent years, multiscale coarse spaces, in particular, spectral coarse spaces based on local spectral problems \cite{al2023efficient,agullo2019robust,bastian2022multilevel,ciaramella2022spectral,efendiev2012robust,eikeland2019overlapping,galvis2010domain,galvis2010domain-reduced,gander2015analysis,heinlein2019adaptive,heinlein2018multiscale,heinlein2018adaptive,heinlein2022fully,spillane2014abstract,spillane2021abstract}, have proved effective and become popular in designing coefficient-robust preconditioners for multiscale PDEs. While a key component of both multiscale/localized model reduction and domain decomposition methods is a similarly constructed coarse problem, it plays essentially different roles -- it is designed to approximate the solution space in the former and to provide a stable decomposition in the latter. Due to this difference, coarse problems in domain decomposition methods typically have a much smaller size. 


While multiscale/localized model reduction and domain decomposition methods have proved effective for addressing multiscale PDEs, the repeated solution of large-scale problems remains a challenging task. For multiscale/localized model reduction methods, the reduced models are often low-dimensional, allowing for fast solutions when low approximation accuracy is acceptable. However, for truly large-scale problems and higher accuracy requirements, these models can become considerably large, potentially affecting their efficiency. Conversely, domain decomposition methods typically involve smaller coarse problems, but (as iterative methods) they become computationally expensive when the number of iterations is high. Therefore, it is advantageous to combine these two methods to design a model reduction-based iterative method that features a low-dimensional coarse problem (reduced model) and achieves rapid convergence through superior approximation properties. With a readily solvable coarse problem and a reduced number of iterations per solve, such an iterative method is expected to outperform both the original model reduction method and traditional domain decomposition methods in terms of efficiency for multiple simulations of large-scale problems. This constitutes the motivation for our work.

This work is based on the Multiscale Spectral Generalized Finite Element Method (MS-GFEM) of Babuska and Lipton \cite{babuska2011optimal}, and in particular on the version developed in \cite{ma2021novel,ma2022error}, which is rooted in the GenEO coarse space \cite{spillane2014abstract}. As a special generalized finite element method, it builds local approximation spaces from selected eigenfunctions of GenEO-type local eigenproblems posed on locally harmonic spaces. These local spaces are then glued together using a partition of unity to form the coarse approximation space. Specifically, these local eigenproblems correspond to the singular value decomposition (SVD) of certain compact operators involving the partition of unity, and the resulting local spaces are optimal for approximating locally harmonic functions in the sense of the Kolomogrov $n$-width \cite{pinkus1985n}. For accurate local approximations of a solution, particular functions that locally solve the underlying PDE are integrated into the MS-GFEM approximation. Another key ingredient of MS-GFEM is the oversampling technique -- the local problems are solved in domains slightly larger than the subdomains where the solution is approximated. It was proved that the method, at both continuous and discrete levels, converges exponentially with the number of local degrees of freedom for multiscale problems with general $L^{\infty}$ coefficients. Besides scalar elliptic problems, MS-GFEM has been applied to Helmholtz problems \cite{ma2022exponential}, singularly perturbed problems \cite{ma2024exponential}, and flow problems in mixed formulation \cite{alber2024mixed}. Notably, it has been used to simulate composite aero-structures with a parallel implementation in \cite{benezech2022scalable}, where it was observed that the size of the coarse problem becomes a bottleneck for method scalability in achieving accurate approximations. This observation partly motivates our work. Moreover, a unified, abstract framework of MS-GFEM has been recently established in \cite{ma2023unified}, where a sharper exponential convergence rate was proved, and applications to a number of multiscale PDEs were presented.

In this paper, following the idea of using a smaller coarse problem with a small number of iterations, we first formulate MS-GFEM as a Richardson-type iterative algorithm for scalar elliptic problems. With the coarse problem fixed, the algorithm computes a new approximation by updating the local particular functions in MS-GFEM using local boundary conditions of an obtained approximate solution. We prove that this algorithm converges at a rate of $\Lambda$, where $\Lambda$ is the error of the underlying MS-GFEM. The exponential convergence property of MS-GFEM thus ensures that only a few iterations are needed for the convergence of the algorithm with a small sized coarse problem. The iterative algorithm naturally leads to a hybrid, restricted additive Schwarz (RAS) preconditioner with the MS-GFEM coarse space. We prove that a GMRES iteration with this RAS preconditioner also converges at a rate of $\Lambda$. Indeed, numerical results in \cref{experimentSection} show that our preconditioner incorporates the coarse space optimally -- the preconditioned algorithms converge much faster than those adding the coarse space differently.

By virtue of the abstract theory of MS-GFEM established in \cite{ma2023unified}, we further develop the iterative MS-GFEM method and the resulting hybrid RAS preconditioner for abstract, symmetric positive definite problems, achieving the same theoretical results. Our abstract theory applies to a variety of elliptic multiscale problems, including scalar elliptic problems, elasticity problems, and ${\bm H}({\rm curl})$ elliptic problems. 

Compared to the original MS-GFEM, the iterative algorithms developed herein enable a flexible selection of the coarse problem size, which can be optimized to minimize the solve time; see \cref{experimentSection}. Also, the iterative nature of the algorithms allows for the use of inexact local eigensolves (e.g., using random sampling techniques developed in \cite{calo2016randomized,chen2020random,buhr2018randomized}), thereby reducing the cost of the preconditioner setup. But at present, even with the exact, expensive local eigensolves, the overhead of the setup of the preconditioner can be easily compensated for by direct savings in the solve stage for problems with a large number of right-hand sides. More importantly, the strong theoretical guarantees and broad applicability make them reliable and appealing for challenging multiscale problems, for which effective solvers are currently unavailable.  

We note that there are several recent, closely related works. In \cite{nataf2024coarse}, a GenEO-type coarse space with local spectral problems posed on certain discrete harmonic subspaces was devised and analysed in a fully algebraic setting. This work primarily focuses on designing coarse spaces for general domain decomposition methods, without investigating the convergence rate of the method with respect to the dimension of local spaces. In \cite{hu2024novel}, a two-level RAS preconditioner with a very similarly constructed spectral coarse space was proposed separately for Helmholtz problems, but without using oversampling for defining the method. In \cite{heinlein2022fully}, local spectral problems corresponding to the SVD of the so-called 'transfer' operators defined on local harmonic spaces were used to build fully algebraic spectral coarse spaces. These studies show increasing interest in using MS-GFEM type spectral coarse spaces for domain decomposition methods.

The rest of this paper is structured as follows. In \cref{modelproblem}, we present the MS-GFEM with error estimates for solving a discretized scalar elliptic equation used as the model problem. In \cref{iterativesection}, we present the iterative MS-GFEM algorithm, derive the hybrid RAS preconditioner, and prove upper bounds on the convergence rate of the preconditioned GMRES algorithm. A comparison of this preconditioner with the GenEO preconditioner is also given in this section. In \cref{abstractFramework}, based on the developed abstract theory of MS-GFEM, we establish the iterative method with the same theoretical results for abstract, symmetric positive definite problems, and present several applications. In \cref{experimentSection}, we apply the proposed method to solve a scalar elliptic equation with a high-contrast coefficient and an elasticity problem in composite aero-structures, and systematically evaluate its performance.

\section{Model problem and MS-GFEM} \label{modelproblem}
To simplify the presentation, we will first introduce a model problem, the heterogeneous diffusion equation with homogeneous Dirichlet boundary conditions, and start by presenting the MS-GFEM for this problem. However, the MS-GFEM applies to general elliptic problems, as will be shown in \cref{abstractFramework}.


\subsection{Model problem} \label{continuousmodelproblem}

Let $\Omega\subset \mathbb{R}^{d}$, $d=2,\,3$, be a bounded Lipschitz domain with a polygonal boundary $\Gamma$, and let $A\in \big(L^{\infty}(\Omega)\big)^{d\times d}$ be a matrix-valued function that is pointwise symmetric and satisfies the uniform elliptic condition: there exist constants $0< \alpha < \beta<\infty$ such that
$$
\alpha |\zeta|^{2} \leq A(x)\zeta\cdot\zeta \leq \beta  |\zeta|^{2},\quad \forall {\zeta}\in \mathbb{R}^{d},\quad x\in\Omega.
$$
Given a linear functional $\ell$ on $H_{0}^{1}(\Omega)$, we seek $u \in H_{0}^{1}(\Omega)$ such that 
\begin{equation}\label{model_problem_continuous}
a(u,v):=\int_{\Omega}(A\nabla u)\cdot\nabla v \,dx = \ell(v) \quad \tforall v \in H_{0}^{1}(\Omega).   
\end{equation}
Given the assumptions on the coefficient $A$, the bilinear form $a(\cdot,\cdot)$ is bounded and coercive, and hence, problem \cref{model_problem_continuous} has a unique solution by the Lax-Milgram theorem.

Now we consider the standard finite element approximation of problem \cref{model_problem_continuous}. Let $\{\tau_h\}$ be a family of shape-regular triangulations of $\Omega$ into triangles (tetrahedra), where $h = \max_{K \in \tau_h}\text{diam}(K)$. We assume that the mesh-size $h$ is small enough such that the fine-scale details of the coefficient $A$ are resolved by $\tau_{h}$. Let $V_h \subset H_{0}^1(\Omega)$ be a finite element subspace consisting of continuous piecewise polynomials. The discretization of problem \cref{model_problem_continuous} is given by: find $u_{h} \in V_{h}$ such that
\begin{equation}\label{model_problem_discrete}
a(u_{h},v) = \ell(v) \quad \tforall v \in V_{h}.   
\end{equation}
Let $\{\psi_{k}\}_{k=1}^{n}$ be a basis for $V_{h}$ with $n:={\rm dim}(V_{h})$. Then, problem \cref{model_problem_discrete} can be written as a linear system:
\begin{equation}\label{linear_system}
 {\bf A}{\bf u} = {\bf f},   
\end{equation}
where ${\bf A} = (a_{ij})\in \mathbb{R}^{n\times n}$ and ${\bf f} = (f_i)\in \mathbb{R}^{n}$ are given by $a_{ij} = a(\psi_i,\psi_j)$ and $f_i = \ell(\psi_i)$ for $i,j = 1,\cdots,n$. In practical applications, the linear system \cref{linear_system} often contains a huge number of degrees of freedom. Moreover, the presence of coefficient heterogeneities makes the matrix ${\bf A}$ notoriously ill-conditioned. Hence, efficient model order reduction or preconditioning techniques are needed for solving problem \cref{model_problem_discrete}. 

For later use, let us introduce some notation. For any subdomain $\omega\subset \Omega$, we define the local FE spaces:
\begin{gather*}
   V_{h}(\omega) = \{v|_{\omega}\,:\,v\in V_{h} \}, \quad 
  V_{h,0}(\omega) = \{v\in V_{h}(\omega)\,:\,\supp(v)\subset \overline{\omega}\}, 
\end{gather*}
and the local bilinear form $a_{\omega}(\cdot,\cdot)$:
\begin{equation}
a_{\omega}(u,v):=\int_{\omega}(A\nabla u)\cdot\nabla v \,dx\quad \text{for all}\;\; u,v\in H^{1}(\omega).
\end{equation}
Moreover, we define 
\begin{equation*}
 \Vert u \Vert_{a,\omega} := \sqrt{a_{\omega}(u,u)} \quad \text{for all}\;\; u\in H^{1}(\omega),
\end{equation*}
and when $\omega=\Omega$, we simply write $\Vert u \Vert_{a}$.

\subsection{GFEM}
In this subsection, we briefly present the Generalized Finite Element Method (GFEM) upon which the MS-GFEM is built. The classical GFEM was formulated at the continuous level (typically in the $H^{1}$ setting) for directly discretizing PDE problems. Here, nevertheless, it will be adapted to the FE setting for solving discretized problems. Similar techniques were widely used in localized model reduction and domain decomposition methods \cite{efendiev2011multiscale,spillane2014abstract}. 

Let $\{ \omega_i\}_{i=1}^{M}$ with $\Omega = \cup_{i=1}^{M}\omega_i$ be an overlapping decomposition of the domain $\Omega$. We assume that all the subdomains $\omega_i$ are resolved by $\tau_{h}$. Let $\xi$ denote the maximum number of the subdomains that overlap at any given point, i.e.,
\begin{equation}\label{coloring-constant}
\xi:= \max_{x \in \Omega}{\left( \text{card}\{i \ | \ x \in \omega_i \} \right)}. 
\end{equation}
A key ingredient of the GFEM is a partition of unity. Let $\{\chi_{i}\}_{i=1}^{M}$ be a partition of unity subordinate to the open cover $\{ \omega_i\}_{i=1}^{M}$ that satisfies:
\begin{equation}\label{POU}
\begin{array}{lll}
{\displaystyle \supp (\chi_i)\subset \overline{\omega_i}, \quad 0\leq \chi_{i}\leq 1,\quad \sum_{i=1}^{M}\chi_{i} =1 \;\;\text{on} \;\,\Omega,}\\[4mm]
{\displaystyle \chi_{i}\in W^{1,\infty}(\omega_{i}),\;\;\Vert\nabla \chi_{i} \Vert_{L^{\infty}(\omega_i)} \leq \frac{C_{\chi}}{\mathrm{diam}\,(\omega_{i})}.}
\end{array}
\end{equation}
To adapt the GFEM to the FE setting, we need a mesh-adapted partition of unity. Let $\{\chi_{h,i}\}_{i=1}^{M}$ be a family of operators defined by
\begin{align}
    \chi_{h,i}: V_{h}(\omega_i) &\rightarrow V_{h,0}(\omega_i), \label{POU-operators} \\
    v &\mapsto I_h(\chi_i v) \nonumber,
\end{align}
where $I_h: H_{0}^1(\Omega) \rightarrow V_h$ is the Lagrange interpolation operator. Clearly, the operators $\chi_{h,i}$ have the partition of unity property:
$$
     \sum\limits_{i = 1}^{M} \chi_{i,h}(v|_{\omega_i}) = v \quad \text{for all}\;\; v\in V_{h}.
 $$
In what follows, $\chi_{h,i}$, $i=1,\cdots,M$, are referred to as the partition of unity operators.

The basic idea of the GFEM is to construct the global ansatz space by gluing carefully designed local approximation spaces together by the partition of unity. On each subdomain $\omega_i$, let a local particular function $u_{i}^{p}\in V_{h}(\omega_i)$ and a local approximation space $S_{m_i}(\omega_i) \subset V_{h}(\omega_i)$ of dimension $m_i$ be given, which will be defined later. Then, the global particular function $u^{p}$ and the global approximation space $S_{m}(\Omega)$ of dimension $m = m_{1}+\cdots+m_{M}$ are defined by 
$$
    u^p = \sum\limits_{i=1}^M \chi_{h,i}(u_{i}^p)\in V_{h}, \quad S_{m}(\Omega) = \left\{ \sum\limits_{i=1}^M \chi_{h,i}( \phi_i) \ | \ \phi_i \in S_{m_i}(\omega_i) \right\} \subset V_{h}.
$$
The GFEM approximation of the fine-scale FE problem \cref{model_problem_discrete} is then defined by: find $u_{h}^G \in u^p + S_{m}(\Omega)$ such that
$$
    a(u_{h}^G, v) = \ell(v) \quad \tforall v \in S_{m}(\Omega).
$$
By Cea's lemma, the GFEM approximation $u_{h}^G$ is the best approximation of $u_h$ in $u^p + S_{m}(\Omega)$, i.e.
\begin{equation}\label{best-approx}
 \lVert u_h - u_{h}^G \rVert_a = \inf\limits_{\phi \in u^p + S_{m}(\Omega)} \lVert u_h - \phi \rVert_a. 
\end{equation}
Hence, the accuracy of the GFEM is determined by the quality of the global approximation, which, in turn, hinges on the accuracy of the local approximations. Indeed, we have the following fundamental approximation theorem, which is a variant of the classical approximation theorem of the GFEM (see \cite[Theorem 2.1]{melenk1996partition}). The difference arises from the local approximation strategy -- we aim at approximating $\chi_{h,i} ({u_h}|_{\omega_i})$ instead of ${u_h}|_{\omega_i}$ alone as in the classical GFEM.

\begin{theorem} \label{gfem}
    Let $v \in V_{h}$. Assuming that for each $i = 1,...,M$, 
    $$
        \inf\limits_{\phi \in u_{i}^p + S_{m_i}(\omega_i)} \big\lVert \chi_{h,i} (v|_{\omega_i}- \phi) \big\rVert_{a, \omega_i} \leq e_i,
    $$
    then,
    $$
        \inf\limits_{\phi \in u^p + S_{m}(\Omega)} \lVert v - \phi \rVert_a \leq \Big( \xi\sum_{i=1}^{M} e^{2}_i \Big)^{1/2},
    $$
    where $\xi$ is defined by \cref{coloring-constant}.
\end{theorem}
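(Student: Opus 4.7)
The plan is to construct an explicit candidate $\phi \in u^p + S_m(\Omega)$ and bound $\|v - \phi\|_a$ directly; this immediately dominates the infimum on the left-hand side. The construction glues together local best approximants using the partition of unity operators $\chi_{h,i}$, and the bound comes from combining the partition of unity identity for $v$ with a standard finite-overlap estimate that turns a sum over the $\omega_i$ into the coloring constant $\xi$.

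First, for each $i$ I would use that $u_i^p + S_{m_i}(\omega_i)$ is a closed finite-dimensional affine set and $\|\chi_{h,i}(v|_{\omega_i} - \,\cdot\,)\|_{a,\omega_i}$ is continuous, so a minimizer $\phi_i$ exists with $\|\chi_{h,i}(v|_{\omega_i} - \phi_i)\|_{a,\omega_i} \leq e_i$. Setting $\phi := \sum_{i=1}^M \chi_{h,i}(\phi_i)$ and writing $\phi_i = u_i^p + s_i$ with $s_i \in S_{m_i}(\omega_i)$, the linearity of $\chi_{h,i}$ gives $\phi = u^p + \sum_i \chi_{h,i}(s_i) \in u^p + S_m(\Omega)$, so $\phi$ is admissible. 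Next, I would invoke the partition of unity property $v = \sum_i \chi_{h,i}(v|_{\omega_i})$, which holds for $v \in V_h$ by the stated property of the $\chi_{h,i}$, to write
\begin{equation*}
v - \phi = \sum_{i=1}^{M} w_i, \qquad w_i := \chi_{h,i}(v|_{\omega_i} - \phi_i) \in V_{h,0}(\omega_i),
\end{equation*}
so that each $w_i$ is supported in $\overline{\omega_i}$ and satisfies $\|w_i\|_{a,\omega_i} \leq e_i$.

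The last and only delicate step is the finite-overlap bound. For almost every $x \in \Omega$ at most $\xi$ of the $w_i$ contribute to $\nabla(\sum_i w_i)(x)$, since $w_i$ vanishes outside $\overline{\omega_i}$. Because $A(x)$ is symmetric positive definite, the quadratic form $\zeta \mapsto A(x)\zeta \cdot \zeta$ is convex and a pointwise Cauchy--Schwarz argument yields
\begin{equation*}
A(x)\,\nabla\bigl({\textstyle\sum_i} w_i\bigr)(x) \cdot \nabla\bigl({\textstyle\sum_i} w_i\bigr)(x) \;\leq\; \xi \sum_{i=1}^{M} A(x)\,\nabla w_i(x) \cdot \nabla w_i(x).
\end{equation*}
Integrating over $\Omega$ gives $\|v - \phi\|_a^2 \leq \xi \sum_i \|w_i\|_{a,\omega_i}^2 \leq \xi \sum_i e_i^2$, which dominates the global infimum. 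I expect this overlap step to be the main point of care: the remaining partition-of-unity bookkeeping is routine, but here is where the factor $\xi$ from \cref{coloring-constant} enters and where the SPD structure of $A$ is genuinely used.
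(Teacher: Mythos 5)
Your proof is correct and follows essentially the same route as the classical argument the paper relies on (it cites Melenk--Babu\v{s}ka, Theorem 2.1, rather than writing the proof out): glue the local quasi-minimizers with the partition-of-unity operators, use $\sum_i \chi_{h,i}(v|_{\omega_i}) = v$ to write $v-\phi=\sum_i w_i$ with $\supp w_i \subset \overline{\omega_i}$, and apply the pointwise finite-overlap Cauchy--Schwarz bound in the $A(x)$-inner product to obtain the factor $\xi$. The one step you flag as delicate is indeed the crux, and your treatment of it is sound.
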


Based on \cref{best-approx} and \cref{gfem}, it is clear that the core of the GFEM lies in a judicious selection of the local particular functions $u_{i}^{p}$ and the local approximation spaces $S_{m_i}(\omega_i)$ such that $\chi_{h,i} ({u_h}|_{\omega_i})$ can be well approximated in $\chi_{h,i} \big(u_{i}^{p} +  S_{m_i}(\omega_i)\big)$. A particular local construction with exponential accuracy will be given below.

\subsection{MS-GFEM}\label{msgfem}
Within the MS-GFEM, the local particular functions $u_{i}^{p}$ are defined as local solutions of the underlying PDE, and the local approximation spaces $S_{m_i}(\omega_i)$ are spanned by selected singular vectors of certain compact operators involving the partition of unity operators. With such local approximations, the method achieves exponential convergence with respect to the number $m_i$ of local degrees of freedom. To define $u_{i}^{p}$ and $S_{m_i}(\omega_i)$ precisely, we introduce a set of oversampling domains $\omega_i^* \supset \omega_i$, $1\leq i\leq M$, which we assume to be resolved by $\tau_h$.


On each oversampling domain $\omega_i^{\ast}$, we consider the following local problem: Find $\varphi_{i} \in V_{h,0}(\omega_i^*)$ such that 
$$
    a_{\omega_i^{\ast}}(\varphi_{i},v) = \ell(v) \quad \tforall v \in V_{h,0}(\omega_i^*).
$$
The local particular function $u_{i}^{p}$ is then defined by 
\begin{equation}\label{local-par-sol}
u_{i}^{p} = \varphi_{i}|_{\omega_i}.     
\end{equation}
To define the local approximation space, we first introduce the $a$-harmonic subspace of $V_{h}(\omega_i^*)$:
$$
    V_{h,A}(\omega_i^*) := \big\{ v \in V_{h}(\omega_i^*) \ | \ a_{\omega_i^*}(v,w) = 0 \quad \forall w \in V_{h,0}(\omega_i^*) \big\},
$$
i.e. the $a$-orthogonal complement of $V_{h,0}(\omega_i^*)$ in $V_{h}(\omega_i^*)$. We note that $u_{h}|_{\omega_i^{\ast}} - \varphi_{i}\in V_{h,A}(\omega_i^*)$. Thus, we consider the following local eigenproblem: Find $\lambda_i \in \mathbb{R}\cup \{+\infty\}$ and $\phi_{i} \in V_{h,A}(\omega_i^*)$ such that
\begin{equation} \label{eigenproblemdiscrete}
   a_{\omega_i}\big(\chi_{h,i} (\phi_{i}|_{\omega_i}), \,\chi_{h,i} (v|_{\omega_i}) \big)=\lambda_{i} \,a_{\omega_i^{\ast}}(\phi_{i}, v) \quad \tforall v \in V_{h,A}(\omega_i^*).
\end{equation}
Then, the desired local approximation space $S_{m_i}(\omega_i) \subset V_{h,A}(\omega_i)$ is defined by 
\begin{equation}\label{local-appro-space}
S_{m_i}(\omega_i) = {\rm span} \big\{\phi_{i,1}|_{\omega_i}, \cdots, \phi_{i,m_i}|_{\omega_i} \big\},    
\end{equation}
where $\phi_{i,j}$ denotes the $j$-th eigenfunction of problem \cref{eigenproblemdiscrete} (with eigenvalues arranged in decreasing order). We note that the bases of $S_{m_i}(\omega_i)$ are the (right) singular vectors of the operator
\begin{equation*}
    P_{i}: V_{h,A}(\omega_i^*) \rightarrow V_{h,0}(\omega_i) \quad  v \mapsto \chi_{h,i}(v|_{\omega_i}).
\end{equation*}
The optimality of the space $S_{m_i}(\omega_i)$ for approximating locally a-harmonic functions was demonstrated in \cite{ma2022error} by means of the concept of the Kolmogorov $n$-width.

Let $\lambda_{i,m_i+1}$ be the $(m_i+1)$-th eigenvalue of problem \cref{eigenproblemdiscrete}, i.e., the eigenvalue corresponding to the first eigenfunction not included in $S_{m_i}(\omega_i)$. The choice for $u_{i}^{p}$ and $S_{m_i}(\omega_i)$ leads to the following local approximation property \cite[Theorem 3.3]{ma2022error}.
\begin{lemma}\label{local-appro-property}
Let $u_{h}$ be the solution of problem \cref{model_problem_discrete}, and let $u_{i}^{p}$ and $S_{m_i}(\omega_i)$ be defined by \cref{local-par-sol} and \cref{local-appro-space}, respectively. Then,
    $$
        \inf\limits_{\phi \in u_{i}^{p} + S_{m_i}(\omega_i)} \lVert \chi_{h,i} (u_h|_{\omega_i} - \phi) \rVert_{a,{\omega_i}} \leq \lambda_{i,m_i+1}^{1/2} \lVert u_h \rVert_{a,\omega_i^*}.
    $$

\end{lemma}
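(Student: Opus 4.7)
The plan is to construct an explicit near-minimizer $\phi$ by truncating the eigenfunction expansion of $u_h|_{\omega_i^\ast} - \varphi_i$, then show that the truncation error is controlled by the smallest discarded eigenvalue $\lambda_{i,m_i+1}$.

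First I would establish that $e_i := u_h|_{\omega_i^\ast} - \varphi_i \in V_{h,A}(\omega_i^\ast)$. Indeed, for any $v \in V_{h,0}(\omega_i^\ast)$, its zero extension lies in $V_h$, so the global discrete equation \cref{model_problem_discrete} gives $a_{\omega_i^\ast}(u_h|_{\omega_i^\ast}, v) = \ell(v)$. Subtracting the defining equation of $\varphi_i$ shows $a_{\omega_i^\ast}(e_i, v) = 0$ for every $v \in V_{h,0}(\omega_i^\ast)$, which is exactly the definition of $V_{h,A}(\omega_i^\ast)$. As a byproduct, the orthogonal decomposition $u_h|_{\omega_i^\ast} = \varphi_i + e_i$ with respect to $a_{\omega_i^\ast}(\cdot,\cdot)$ gives the Pythagorean bound $\|e_i\|_{a,\omega_i^\ast} \le \|u_h\|_{a,\omega_i^\ast}$.

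Next I would exploit the SVD structure of the eigenproblem \cref{eigenproblemdiscrete}. The variational problem is precisely the right singular-vector problem for $P_i: V_{h,A}(\omega_i^\ast) \to V_{h,0}(\omega_i)$ equipped with the inner products $a_{\omega_i^\ast}$ and $a_{\omega_i}$. Hence the eigenfunctions $\{\phi_{i,j}\}$ may be normalized to form an $a_{\omega_i^\ast}$-orthonormal basis of $V_{h,A}(\omega_i^\ast)$, with the dual orthogonality relation
\[
a_{\omega_i}\!\bigl(\chi_{h,i}(\phi_{i,j}|_{\omega_i}),\,\chi_{h,i}(\phi_{i,k}|_{\omega_i})\bigr) = \lambda_{i,j}\,\delta_{jk}.
\]
Expanding $e_i = \sum_{j\ge 1} c_{i,j}\phi_{i,j}$ in this basis gives $\|e_i\|_{a,\omega_i^\ast}^2 = \sum_j c_{i,j}^2$.

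Now I would choose the candidate $\phi = \varphi_i|_{\omega_i} + \sum_{j=1}^{m_i} c_{i,j}\,\phi_{i,j}|_{\omega_i} \in u_i^p + S_{m_i}(\omega_i)$. Then $u_h|_{\omega_i} - \phi = \sum_{j>m_i} c_{i,j}\,\phi_{i,j}|_{\omega_i}$, and applying $\chi_{h,i}$ followed by the $a_{\omega_i}$-orthogonality above yields
\[
\|\chi_{h,i}(u_h|_{\omega_i} - \phi)\|_{a,\omega_i}^2 = \sum_{j>m_i} c_{i,j}^2\,\lambda_{i,j} \le \lambda_{i,m_i+1} \sum_{j>m_i} c_{i,j}^2 \le \lambda_{i,m_i+1}\,\|e_i\|_{a,\omega_i^\ast}^2,
\]
where the first inequality uses that the eigenvalues are in decreasing order. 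Combining with the Pythagorean bound $\|e_i\|_{a,\omega_i^\ast} \le \|u_h\|_{a,\omega_i^\ast}$ from the first paragraph gives the stated inequality.

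The main technical point is really the orthogonality claim in the second paragraph: one must recognize the generalized eigenproblem \cref{eigenproblemdiscrete} as an SVD, which in particular requires that $P_i$ be viewed between Hilbert spaces where both $a_{\omega_i^\ast}$ (on $V_{h,A}(\omega_i^\ast)$) and $a_{\omega_i}$ (on $V_{h,0}(\omega_i)$) are genuine inner products. Coercivity of $a$ on $H_0^1$-type spaces handles this, so the remaining work is routine.
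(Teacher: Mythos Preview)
Your argument is correct and is the standard route to this estimate; the paper does not give its own proof but cites \cite[Theorem~3.3]{ma2022error}, where the same spectral-truncation reasoning is carried out. One small caveat: your closing remark that ``coercivity of $a$ on $H_0^1$-type spaces handles'' the inner-product structure is slightly too quick for the source space, since $a_{\omega_i^\ast}(\cdot,\cdot)$ is only positive \emph{semi}-definite on $V_{h,A}(\omega_i^\ast)$ when $\omega_i^\ast$ is an interior subdomain (constants lie in the kernel). This does not damage your argument---the kernel directions correspond to the eigenvalue $\lambda=+\infty$ and are therefore among the first $m_i$ modes included in $S_{m_i}(\omega_i)$, while they contribute nothing to $\|e_i\|_{a,\omega_i^\ast}$---but it is worth stating explicitly rather than appealing to $H_0^1$-coercivity.
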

The following global approximation result is a direct consequence of \cref{local-appro-property}, \cref{gfem}, and the best approximation property \cref{best-approx}.
\begin{corollary} \label{msgfemapprox}
Let the local particular solutions $u_{i}^{p}$ and the local approximation spaces $S_{m_i}(\omega_i)$ be defined by \cref{local-par-sol} and \cref{local-appro-space}, respectively. Then,
    $$
        \big\lVert u_h - u_h^{G} \big\rVert_a \leq \Lambda \lVert u_h \rVert_{a}
    $$
    with the constant $\Lambda$ given by
    \begin{equation}\label{Lamdba}
     \Lambda := \Big( \xi \xi^{\ast}\max\limits_{i=1,..,M} \lambda_{i,m_i+1} \Big)^{1/2},   
    \end{equation}
where $\xi$ is defined by \cref{coloring-constant}, and $\xi^{\ast}$ is defined similarly for the oversampling domains.    
\end{corollary}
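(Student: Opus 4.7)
The plan is to chain together the three ingredients already stated: the best approximation property \cref{best-approx}, the abstract GFEM estimate in \cref{gfem}, and the local bound in \cref{local-appro-property}, and then absorb the sum over subdomains using a standard finite-overlap argument on the oversampling cover $\{\omega_i^\ast\}$.

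First, I would invoke \cref{best-approx} to rewrite $\|u_h - u_h^G\|_a$ as the infimum of $\|u_h - \phi\|_a$ over $\phi \in u^p + S_m(\Omega)$. Then, applying \cref{gfem} with $v = u_h$ gives $\|u_h - u_h^G\|_a \leq \bigl(\xi \sum_{i=1}^M e_i^2\bigr)^{1/2}$, where $e_i$ is any upper bound on $\inf_{\phi \in u_i^p + S_{m_i}(\omega_i)} \|\chi_{h,i}(u_h|_{\omega_i} - \phi)\|_{a,\omega_i}$. By \cref{local-appro-property}, we may take $e_i = \lambda_{i,m_i+1}^{1/2} \|u_h\|_{a,\omega_i^\ast}$, so that
\begin{equation*}
\|u_h - u_h^G\|_a^2 \leq \xi \sum_{i=1}^M \lambda_{i,m_i+1}\, \|u_h\|_{a,\omega_i^\ast}^2 \leq \xi \Bigl(\max_{i} \lambda_{i,m_i+1}\Bigr) \sum_{i=1}^M \|u_h\|_{a,\omega_i^\ast}^2.
\end{equation*}

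The remaining step is to bound $\sum_{i=1}^M \|u_h\|_{a,\omega_i^\ast}^2$ by $\xi^\ast \|u_h\|_a^2$. This follows from the finite-overlap property of the oversampling cover: writing each local norm as an integral over $\omega_i^\ast$ and swapping the sum with the integral yields
\begin{equation*}
\sum_{i=1}^M \|u_h\|_{a,\omega_i^\ast}^2 = \int_\Omega \Bigl(\sum_{i=1}^M \mathbf{1}_{\omega_i^\ast}(x)\Bigr) (A\nabla u_h)\cdot \nabla u_h \,\dx \leq \xi^\ast \|u_h\|_a^2,
\end{equation*}
where $\xi^\ast$ is the pointwise maximum number of oversampling domains containing any given point, defined in analogy with \cref{coloring-constant}. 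Combining this with the previous display and taking square roots produces exactly the constant $\Lambda$ in \cref{Lamdba}.

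There is no real obstacle here; the proof is essentially a bookkeeping exercise. The only step requiring a touch of care is the finite-overlap bound, where one must be explicit about the definition of $\xi^\ast$ (which the statement of the corollary introduces only in passing) and verify that the inequality $A\nabla u_h \cdot \nabla u_h \geq 0$ pointwise, guaranteed by the uniform ellipticity assumption on $A$, justifies pulling the indicator sum out of the integral. Everything else is a direct substitution.
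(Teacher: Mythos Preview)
Your proposal is correct and follows exactly the route the paper indicates: it states the corollary as a direct consequence of \cref{local-appro-property}, \cref{gfem}, and \cref{best-approx}, and your write-up spells out precisely those substitutions together with the finite-overlap bound on $\sum_i \|u_h\|_{a,\omega_i^\ast}^2$ that yields the factor $\xi^\ast$.
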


\Cref{msgfemapprox} indicates that a rapid decay of the eigenvalues $\lambda_{i,j}$ (with $j$) is central to the efficiency of the MS-GFEM: the faster $\lambda_{i,j}$ decay, the fewer eigenfunctions are needed for a given error tolerance, and thus the smaller the global coarse problem is. The following theorem shows the exponential decay of the eigenvalues as expected.

\begin{theorem}\cite[Theorem 4.8]{ma2022error} \label{expcont}
Let $\delta_{i}^{\ast}= {\rm dist}(\omega_i, \partial \omega_i^{\ast}\setminus \partial \Omega)>0$. There exist $k_{i},\,b_i>0$ independent of $h$, such that for any $k>k_{i}$, if $h\leq \delta_{i}^{\ast}/(4b_{i}k^{1/(d+1)})$, then
\begin{equation*}
\lambda^{1/2}_{i,k} \leq (1 + C_{\chi})e^{-b_{i}k^{{1}/{(d+1)}}},
\end{equation*}
where $C_{\chi}$ is the constant given in \cref{POU}.
\end{theorem}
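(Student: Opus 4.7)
My plan is to recognise $\lambda_{i,k}^{1/2}$ as the $k$-th singular value of the partition-of-unity operator $P_i : V_{h,A}(\omega_i^{\ast}) \to V_{h,0}(\omega_i)$, $v \mapsto \chi_{h,i}(v|_{\omega_i})$, where the source is equipped with the $a_{\omega_i^{\ast}}$-norm and the target with the $a_{\omega_i}$-norm. By the min-max principle this singular value equals the Kolmogorov $k$-width of the range, so it suffices to construct, for each target rank $k$, a subspace $W_k \subset V_{h,0}(\omega_i)$ of dimension $\lesssim k$ together with an explicit approximation $v \mapsto w(v) \in W_k$ satisfying
$$
\lVert \chi_{h,i}(v|_{\omega_i}) - w(v) \rVert_{a,\omega_i} \leq (1+C_\chi)\, e^{-b_i k^{1/(d+1)}} \lVert v \rVert_{a,\omega_i^{\ast}}
\tforall v \in V_{h,A}(\omega_i^{\ast}).
$$

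To build such a $W_k$, I would introduce a chain of nested intermediate subdomains $\omega_i = D_0 \Subset D_1 \Subset \cdots \Subset D_J = \omega_i^{\ast}$ with buffer widths $\mathrm{dist}(D_{j-1},\partial D_j\setminus \partial\Omega)\gtrsim \delta_i^{\ast}/J$, where the number of shells $J$ will be optimised at the end. On each shell I plan two ingredients. First, a \emph{discrete Caccioppoli-type inequality} for $V_{h,A}(\omega_i^{\ast})$-functions: for any $v\in V_{h,A}(\omega_i^{\ast})$,
$$
\lVert v \rVert_{a,D_{j-1}} \leq \frac{C}{\delta_i^{\ast}/J}\, \lVert v \rVert_{L^2(A),D_j},
$$
obtained by testing the Galerkin orthogonality against $I_h(\eta^2 v)$ for a smooth cutoff $\eta$ supported in $D_j$ and equal to $1$ on $D_{j-1}$, then absorbing terms and controlling the interpolation error produced by $I_h$. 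Second, a coarse quasi-interpolant $\Pi_j : L^2(D_j) \to Q_j$ (e.g.\ a piecewise-polynomial/$L^2$-projection on a coarse mesh of spacing $H\sim \delta_i^{\ast}/J$) with $\dim Q_j \lesssim J^d$ and $\lVert v - \Pi_j v\rVert_{L^2(A),D_j}\leq C H \lVert v \rVert_{a,D_j}$.

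Combining the two ingredients yields a one-step contraction $\lVert v-\Pi_j v\rVert_{a,D_{j-1}} \leq \gamma \lVert v \rVert_{a,D_j}$ with $\gamma<1$ (independent of $J$), after which a telescoping argument over the $J$ shells produces an approximation $\widetilde v \in V_h(D_0)$ with $\lVert v - \widetilde v\rVert_{a,\omega_i}\leq \gamma^{J}\lVert v \rVert_{a,\omega_i^{\ast}}$, using in total $\lesssim J\cdot J^d = J^{d+1}$ coarse degrees of freedom. Applying $\chi_{h,i}$ costs at most the multiplicative factor $(1+C_\chi)$, from \cref{POU} and the stability of $I_h$, and produces an element of $V_{h,0}(\omega_i)$ lying in a subspace $W_k$ of dimension $\lesssim J^{d+1}$. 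Setting $J \sim k^{1/(d+1)}$ and identifying $b_i = -(\log\gamma)/c$ gives the desired bound.

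The main obstacle will be Step~2, the \emph{discrete} Caccioppoli inequality: $V_{h,A}(\omega_i^{\ast})$ is $a$-orthogonal only to $V_{h,0}(\omega_i^{\ast})$, not to arbitrary $H^1_0$-functions, so one cannot simply test with $\eta^2 v$ as in the continuous case. The Lagrange interpolation error terms arising from $I_h(\eta^2 v)-\eta^2 v$ must be controlled by $h \cdot \lVert\nabla\eta\rVert_\infty \sim hJ/\delta_i^{\ast}$, and for these terms to be absorbed by the left-hand side one needs $hJ/\delta_i^{\ast}$ to be sufficiently small. This is precisely the source of the hypothesis $h\leq \delta_i^{\ast}/(4b_i k^{1/(d+1)})$: once $J\sim k^{1/(d+1)}$, the required mesh resolution coincides with the stated bound, and the entire telescoping construction carries over from the continuous setting of \cite{babuska2011optimal} to the finite element setting at the cost of this single, natural mesh condition.
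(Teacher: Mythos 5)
The paper does not actually prove \cref{expcont}; it quotes it from \cite[Theorem 4.8]{ma2022error}, so the relevant comparison is with that reference's argument. Your architecture matches it: identifying $\lambda_{i,k}^{1/2}$ with a singular value/Kolmogorov width of the partition-of-unity operator on the discrete $a$-harmonic space, a chain of $J$ nested subdomains, a discrete Caccioppoli inequality obtained by testing Galerkin orthogonality against $I_h(\eta^2 v)$, a coarse $L^2$ approximation of dimension of order $J^{d}$ per shell, the total count $J^{d+1}$ forcing $J\sim k^{1/(d+1)}$ and hence the exponent $k^{1/(d+1)}$, and --- correctly diagnosed --- the mesh condition $h\leq\delta_i^{\ast}/(4b_i k^{1/(d+1)})$ arising from the need to absorb the interpolation error $I_h(\eta^2 v)-\eta^2 v$ relative to the shell width $\delta_i^{\ast}/J$.

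The one step that fails as written is the ``one-step contraction'' $\lVert v-\Pi_j v\rVert_{a,D_{j-1}}\leq\gamma\lVert v\rVert_{a,D_j}$. The Caccioppoli inequality is available only for (discretely) $a$-harmonic functions, and $v-\Pi_j v$ is not $a$-harmonic, since $\Pi_j v$ is a raw piecewise polynomial on a coarse auxiliary mesh (it need not even lie in $V_h$); so the two ingredients cannot be ``combined'' directly. The standard repair, used in the reference, is to note that the Kolmogorov width of a set $K$ in a Hilbert space is not increased when an approximating subspace $Q$ is replaced by its orthogonal projection onto $\overline{\mathrm{span}}(K)$; hence one may assume without loss that the coarse space on each shell consists of restrictions of discretely $a$-harmonic functions. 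Then $v-q$ is $a$-harmonic for every admissible approximant $q$, Caccioppoli applies to the difference, and the contraction holds. With that substitution your telescoping argument, the dimension count, and the final factor $(1+C_\chi)$ (which comes from bounding $\lVert\chi_{h,i}(w)\rVert_{a,\omega_i}$ by the energy norm plus $\lVert\nabla\chi_i\rVert_{L^\infty}$ times the $L^2$ norm of $w$, using \cref{POU} and the stability of $I_h$) all go through as in the cited proof.
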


\begin{remark}
The constants $k_i$ and $b_i$ above were given explicitly in \cite{ma2022error}. In particular, $k_i$ decreases and $b_i$ increases with the growing oversampling size $\delta^{\ast}_{i}$. Moreover, the conditions on $k$ and $h$ are not essential, and can indeed be removed; see \cite{chen2023exponentially,angleitner2023exponential}.
\end{remark}

\section{MS-GFEM based two-level RAS methods} \label{iterativesection}
In this section we will develop two-level restricted additive Schwarz (RAS) methods based on MS-GFEM for solving the linear system \cref{linear_system}. It will be proved that the methods converge at least at a rate of $\Lambda$ (defined in \cref{Lamdba}).



\subsection{Iterative MS-GFEM}\label{iterativemsgfem}
We start by formulating the MS-GFEM presented in the preceding section, which yields the approximate solution in one shot, as an iterative method. Before doing this, let us describe the idea behind the method. Note that the MS-GFEM approximation can be written as $u_{h}^{G} = u^{p} + u^{S}$, where $u^{S}\in S_{m}(\Omega)$ satisfies
\begin{equation*}
a(u^{S}, v) = a(u_{h}-u^{p},v)\quad \tforall v\in S_{m}(\Omega),
\end{equation*}
i.e., the $a$-orthogonal projection of $u_{h}-u^{p}$ onto the coarse space $S_{m}(\Omega)$. Therefore, the MS-GFEM can be viewed as a coarse space correction to the naive approximation $u^{p}$ obtained by pasting the local solutions together. In general, $u^{p}$ is a poor approximation of $u_{h}$ due to the inaccurate (zero) boundary conditions used for the local solutions. With $u_{h}^{G}$ in hand, a natural idea to obtain a better approximation is to use the local boundary conditions of $u_{h}^{G}$ to update the local solutions, and then compute the coarse space correction. This observation motivates the iterative method below.

To formulate the iterative method in operator language, we need some notation. Recall the local FE spaces $V_{h,0}(\omega_i^*)$ $(1\leq i\leq M)$ and the global approximation space $S_{m}(\Omega)$. We denote by $\pi_i$ and $\pi_S$ the $a$-orthogonal projections of $V_{h}$ onto $V_{h,0}(\omega_i^*)$ and $S_{m}(\Omega)$, respectively, i.e., for any $v\in V_{h}$,
\begin{align*}
\begin{split}
a_{\omega_i^{\ast}}(\pi_i(v),w) &= a(v,w) \quad \tforall w \in V_{h,0}(\omega_i^*),\\[1mm]
a(\pi_S(v),w) &= a(v,w) \quad \tforall w \in S_{m}(\Omega).
\end{split}
\end{align*}
Moreover, we define the operators $\widetilde{\chi}_{h,i}: V_{h,0}(\omega_i^*)\rightarrow V_{h,0}(\omega_i^*)$ ($1\leq i\leq M$) by 
$$\widetilde{\chi}_{h,i}(v) = {\chi}_{h,i}(v|_{\omega_i}),$$
where $\chi_{h,i}$ is defined by \cref{POU-operators}, and we have identified $V_{h,0}(\omega_i)$ as a subspace of $V_{h,0}(\omega^{\ast}_i)$. With these operators we can define the MS-GFEM map as follows.
\begin{definition}\label{MSGFEM-map}
    We call the map $G:V_{h} \rightarrow V_{h}$ given by
    $$
        G(v) = \sum\limits_{i=1}^M \widetilde{\chi}_{h,i} \pi_i(v) + \pi_S \Big( v - \sum\limits_{i=1}^M \widetilde{\chi}_{h,i} \pi_i(v) \Big)
    $$
    the MS-GFEM map.  
\end{definition}
The map $G$ is linear and depends on $a(\cdot,\cdot)$, as well as on the choices of the $\omega_i$, $\omega_i^*$, $\chi_i$ and $m_i$, all of which remain fixed throughout this section. Note that for any $v\in V_{h}$, $G(v)$ is exactly the MS-GFEM approximation for the fine-scale problem \cref{model_problem_discrete} with the right-hand side $\ell(\cdot):=a(v, \cdot)$. Hence, we have the following estimate by \cref{msgfemapprox}.
\begin{lemma} \label{cor1}
 Let $\Lambda$ be defined by \cref{Lamdba}. For any $v \in V_{h}$,
 \begin{equation*}
\lVert v - G(v) \rVert_a \leq \rate \lVert v \rVert_{a}.    
 \end{equation*}

\end{lemma}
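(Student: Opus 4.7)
The plan is to recognize the map $G$ as exactly encoding the MS-GFEM procedure for a cleverly chosen right-hand side, and then invoke \cref{msgfemapprox} directly. Given an arbitrary $v \in V_h$, define the linear functional $\ell_v \in V_h'$ by $\ell_v(w) := a(v,w)$. The FE solution of the discrete problem \cref{model_problem_discrete} with this particular right-hand side is, by Galerkin orthogonality, exactly $v$ itself. Thus, if I can show that $G(v)$ coincides with the MS-GFEM approximation $u_h^G$ associated to $\ell_v$, then \cref{msgfemapprox} immediately yields $\lVert v - G(v) \rVert_a \leq \Lambda \lVert v \rVert_a$.

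To identify $G(v)$ with the MS-GFEM approximation for $\ell_v$, I would verify the two constituent pieces of \cref{MSGFEM-map} against the definitions in \cref{msgfem}. For the local particular functions, the defining equation $a_{\omega_i^*}(\varphi_i, w) = \ell_v(w) = a(v,w)$ for all $w \in V_{h,0}(\omega_i^*)$ is exactly the definition of $\pi_i(v)$, so $\varphi_i = \pi_i(v)$ and consequently the global particular function becomes
$$u^p = \sum_{i=1}^M \chi_{h,i}(\varphi_i|_{\omega_i}) = \sum_{i=1}^M \widetilde{\chi}_{h,i}\pi_i(v).$$
For the coarse correction, writing $u_h^G = u^p + u^S$ with $u^S \in S_m(\Omega)$, Galerkin orthogonality on $S_m(\Omega)$ gives $a(u^S, w) = \ell_v(w) - a(u^p, w) = a(v - u^p, w)$ for all $w \in S_m(\Omega)$, hence $u^S = \pi_S(v - u^p)$. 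Combining the two displays yields $u_h^G = G(v)$ as desired.

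With the identification in hand, the conclusion is immediate: \cref{msgfemapprox} applied to the problem with right-hand side $\ell_v$ and FE solution $u_h = v$ gives $\lVert v - G(v) \rVert_a = \lVert u_h - u_h^G \rVert_a \leq \Lambda \lVert u_h \rVert_a = \Lambda \lVert v \rVert_a$.

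I expect no serious obstacle here; the only mildly delicate point is making sure the book-keeping in the identification of $G(v)$ with $u_h^G$ is clean, in particular the slight notational mismatch between $\chi_{h,i}$ (acting on $V_h(\omega_i)$) and $\widetilde{\chi}_{h,i}$ (acting on $V_{h,0}(\omega_i^*)$ via restriction to $\omega_i$), which the paper has already reconciled by identifying $V_{h,0}(\omega_i)$ as a subspace of $V_{h,0}(\omega_i^*)$. Everything else is a direct appeal to definitions and to the already-established a priori error estimate.
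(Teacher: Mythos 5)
Your proposal is correct and follows exactly the paper's own route: the paper proves \cref{cor1} by observing that $G(v)$ is precisely the MS-GFEM approximation for the right-hand side $\ell(\cdot) := a(v,\cdot)$, whose fine-scale FE solution is $v$ itself, and then invoking \cref{msgfemapprox}. Your write-up merely fills in the identification of the particular functions and the coarse correction in more detail than the paper does, which is a faithful elaboration rather than a different argument.
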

Now we can define the iterative method by means of the map $G$.
\begin{definition}[Iterative MS-GFEM] \label{iteration}
Let $u_{h}$ be the solution of problem \cref{model_problem_discrete}. Given an initial guess $\richIterate^0 \in V_{h}$, let the sequence $\{\richIterate^j\}_{j \in \mathbb{N}}$ be constructed by 
    \begin{equation} \label{richardson-iteration-equation}
        \richIterate^{j+1} := \richIterate^j + G\left( u_h - \richIterate^j \right),\quad  j=0,1\cdots.
    \end{equation}
We will call this algorithm the MS-GFEM iteration in the following.
\end{definition}

While $u_{h}$ is involved in the algorithm \cref{richardson-iteration-equation}, it is indeed not needed for the iteration as we can use the equation $a(u_{h},v)= \ell (v)\;\;\forall v\in V_{h}$. The iterative MS-GFEM is a RAS type algorithm (see \cite[Chapter 1]{dolean2015introduction}) with a coarse-space correction in each iteration. A key difference between our method and the traditional RAS method is that the local problems here are solved on the enlarged subdomains, which, combined with the specially chosen coarse space, leads to a fast convergence. A simple application of \cref{cor1} shows that the iteration converges at a rate of $\rate$.
\begin{proposition} \label{cor2}
Let the sequence $\{\richIterate^j\}_{j \in \mathbb{N}}$ be defined by \cref{iteration}. Then,  
    \begin{equation*}
    \lVert \richIterate^{j+1} - u_h \rVert_a \leq \rate \lVert \richIterate^j - u_h \rVert_{a},\quad j=0,1,\cdots.  
    \end{equation*}
\end{proposition}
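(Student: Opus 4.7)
The plan is to reduce the statement directly to \cref{cor1} by a change of variables that recasts the iteration error as the GFEM approximation error for an auxiliary problem. Specifically, I would introduce the error $e^{j} := u_h - v^{j}$ and rewrite the recursion \cref{richardson-iteration-equation} as
\begin{equation*}
v^{j+1} - u_h = (v^{j} - u_h) + G(u_h - v^{j}) = -e^{j} + G(e^{j}) = -\bigl(e^{j} - G(e^{j})\bigr).
\end{equation*}

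Next, I would apply \cref{cor1} with the choice $v = e^{j}\in V_h$. Since $G$ is linear and the statement of \cref{cor1} holds for every element of $V_h$, this yields
\begin{equation*}
\lVert v^{j+1} - u_h \rVert_a = \lVert e^{j} - G(e^{j}) \rVert_a \leq \Lambda \lVert e^{j} \rVert_a = \Lambda \lVert v^{j} - u_h \rVert_a,
\end{equation*}
which is exactly the claimed one-step contraction.

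There is essentially no obstacle here: the only conceptual point is to verify that $e^{j}$ is indeed an admissible argument for \cref{cor1}, i.e.\ that $e^{j}\in V_h$. This is immediate by induction, since $v^{0}\in V_h$ by assumption and each update $G(u_h - v^{j})$ lies in $V_h$ by the definition of the MS-GFEM map $G$ (its range is contained in the sum of the local spaces $V_{h,0}(\omega_i^{\ast})$ and the coarse space $S_m(\Omega)$, both subspaces of $V_h$). The only mildly nontrivial ingredient is thus the interpretation baked into \cref{cor1}: that $G(w)$ coincides with the MS-GFEM approximation to the problem with right-hand side $a(w,\cdot)$, whose exact solution is $w$ itself, so that \cref{msgfemapprox} furnishes the $\Lambda$-bound on $\lVert w - G(w)\rVert_a$. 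Once this is in place, \cref{cor2} follows with no additional work.
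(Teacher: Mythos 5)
Your proof is correct and follows exactly the route the paper takes: the paper's own proof is the one-line instruction to combine the recursion \cref{richardson-iteration-equation} with \cref{cor1}, which is precisely the computation $v^{j+1}-u_h = -\bigl(e^j - G(e^j)\bigr)$ followed by the $\Lambda$-bound applied to $e^j\in V_h$. Your additional remarks on the admissibility of $e^j$ and the interpretation of $G$ are sound but not needed beyond what the paper assumes.
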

   \begin{proof}
   Use the relation \cref{richardson-iteration-equation} and \cref{cor1}.
    \end{proof}

Next we give the matrix formulation for the MS-GFEM iteration. For each $1\leq i\leq M$, let $\widetilde{\bf R}_i^T$ be the matrix representation of the zero extension operator $V_{h,0}(\omega_i^*) \rightarrow V_{h}$ with respect to the basis $\{\psi_{k}\}_{k=1}^{n}$ of $V_{h}$. Here we use the symbol $\;\widetilde{}\;$ to distinguish them from matrix representations of the usual extension operators $V_{h,0}(\omega_i)\rightarrow V_{h}$. Similarly, let ${\bf R}^{T}_S$ be the matrix representation of the natural embedding $S_{m}(\Omega) \rightarrow V_{h}$ with respect to the chosen basis of $S_{m}(\Omega)$. Using these matrices and the stiffness matrix $\bf A$, we can write the projections $\pi_i$ and $\pi_S$ in matrix form as follows: 
$$
    \boldsymbol{\pi}_i = \widetilde{\bf A}_i^{-1} \widetilde{\bf R}_i {\bf A} \quad \text{and} \quad \boldsymbol{\pi}_S = {\bf A}_S ^{-1}{\bf R}_S {\bf A},
$$
with 
$$
    \widetilde{\bf A}_i: = \widetilde{\bf R}_i {\bf A} \widetilde{\bf R}_i^T \quad \text{and} \quad {\bf A}_S: = {\bf R}_S {\bf A} {\bf R}_S^T.
$$
Then the matrix representation of the MS-GFEM map $G$ is given by
\begin{align*}
  {\bf G} 
    &= \left(  \sum_{i=1}^{M} \widetilde{\bf R}_i^T \widetilde{\boldsymbol{\chi}}_i \widetilde{\bf A}_i^{-1} \widetilde{\bf R}_i {\bf A} \right)
    + 
    \big({\bf R}_S^T {\bf A}_S^{-1} {\bf R}_S {\bf A} \big) 
    \left( {\bf I} - \sum_{i=1}^{M} \widetilde{\bf R}_i^T \widetilde{\boldsymbol{\chi}}_i \widetilde{\bf A}_i^{-1} \widetilde{\bf R}_i {\bf A}  \right),
\end{align*}
where ${\bf I}$ denotes the identity matrix, and $\widetilde{\boldsymbol{\chi}}_i$ is the matrix representation of the operator $\widetilde{\chi}_{h,i}$. Note that the matrix ${\bf G}$ can be written as ${\bf G} = {\bf B}{\bf A}$, with the two-level hybrid RAS preconditioner
\begin{equation} \label{preconditioner}
    {\bf B} = \left(  \sum_{i=1}^{M} \widetilde{\bf R}_i^T \widetilde{\boldsymbol{\chi}}_i \widetilde{\bf A}_i^{-1} \widetilde{\bf R}_i \right)
    + 
    \big({\bf R}_S^T {\bf A}_S^{-1} {\bf R}_S \big) 
    \left( {\bf I} - {\bf A}\sum_{i=1}^{M} \widetilde{\bf R}_i^T \widetilde{\boldsymbol{\chi}}_i \widetilde{\bf A}_i^{-1} \widetilde{\bf R}_i\right).
\end{equation}
The matrix formulation of the MS-GFEM iteration \cref{richardson-iteration-equation} is now given by
\begin{equation} \label{matrix-form-iteration}
    \richIterateVector^{j+1} := \richIterateVector^{j} + {\bf BA}({\bf u} - \richIterateVector^{j}) = \richIterateVector^{j} + {\bf B}({\bf f} - {\bf A}\richIterateVector^{j}).
\end{equation}
In practice, the algorithm \cref{matrix-form-iteration} is performed in two steps:
\begin{align*}
\begin{split}
\richIterateVector^{j+\frac12} &=  \richIterateVector^{j} + \sum_{i=1}^{M} \widetilde{\bf R}_i^T \widetilde{\boldsymbol{\chi}}_i \widetilde{\bf A}_i^{-1} \widetilde{\bf R}_i ({\bf f} - {\bf A}\richIterateVector^{j}), \\ 
\richIterateVector^{j+1} &=  \richIterateVector^{j+\frac12} + {\bf R}_S^T {\bf A}_S^{-1} {\bf R}_S ({\bf f} - {\bf A}\richIterateVector^{j+\frac12}). 
\end{split}
\end{align*}
In practice, the preconditioner ${\bf B}$ is often used in conjunction with a Krylov subspace method to obtain faster convergence. This will be discussed in the next subsection.

\subsection{Preconditioned GMRES algorithm} \label{preconditionersection}
We consider the solution of the preconditioned system
\begin{equation} \label{preconditioned-system}
   {\bf BAu}  = {\bf Bf}
\end{equation}
with ${\bf B}$ defined by \cref{preconditioner}. As usual RAS preconditioners, ${\bf B}$ is a nonsymmetric preconditioner and thus we use GMRES to solve the system \cref{preconditioned-system}. For completeness, we briefly describe the basic idea of the algorithm below. Let $b(\cdot,\cdot)$ be an inner product on $\mathbb{R}^{n}$ and $\Vert\cdot\Vert_{b}$ the induced norm. Given an initial vector ${\bf u}^{0}\in \mathbb{R}^{n}$, the GMRES algorithm applied to \cref{preconditioned-system} seeks a sequence of approximate solutions ${\bf u}^{j}$, $j=1,2\cdots$, by solving the least-square problems 
\begin{equation}\label{GMRES-minimization}
\min_{{\bf v} \in {\bf u}^{0}+\mathcal{K}_j} \lVert {\bf BA}({\bf u} - {\bf v}) \rVert_b    
\end{equation}
with the Krylov subspaces
$$\mathcal{K}_j:= \text{span} \big\{ {\bf BA}({\bf u} - {\bf u}^{0}), \,({\bf BA})^{2}({\bf u} - {\bf u}^{0}),...,({\bf BA})^{j}({\bf u} - {\bf u}^{0}) \big\}.$$

Thanks to the minimization property \cref{GMRES-minimization}, GMRES approximates the exact solution ${\bf u}$ at least as well as the iterative MS-GFEM algorithm, with the residual measured in the $b$-norm. More precisely, we have
\begin{proposition} \label{gmresbetter}
Let $\{ {\bf u}^{j} \}_{j\in \mathbb{N}}$ and $\{ \richIterateVector^{j} \}_{j\in \mathbb{N}}$ be the sequences of approximate solutions generated by GMRES and by the iterative MS-GFEM with the same initial guess, respectively. Then,
    $$
        \lVert {\bf BA} ({\bf u} - {\bf u}^{j}) \rVert_b \leq  \lVert {\bf BA} ({\bf u} - \richIterateVector^j) \rVert_b,\quad j=1,2\cdots.
    $$
\end{proposition}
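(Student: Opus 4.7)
The plan is to show that each iterate $\richIterateVector^j$ of the iterative MS-GFEM lies in the affine Krylov subspace ${\bf u}^0 + \mathcal{K}_j$ over which GMRES minimizes, so that the claim becomes an immediate consequence of the defining minimization property \cref{GMRES-minimization}.

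First I would rewrite the MS-GFEM iteration in residual/error form. Setting $E_j := {\bf u} - \richIterateVector^j$ and using \cref{matrix-form-iteration}, one gets
\begin{equation*}
E_{j+1} = E_j - {\bf B}({\bf f} - {\bf A}\richIterateVector^j) = E_j - {\bf BA}\,E_j = ({\bf I} - {\bf BA})E_j,
\end{equation*}
so by induction $E_j = ({\bf I} - {\bf BA})^j E_0$. Consequently,
\begin{equation*}
\richIterateVector^j - \richIterateVector^0 = E_0 - E_j = \bigl[{\bf I} - ({\bf I} - {\bf BA})^j\bigr] E_0 = p_{j}({\bf BA})\,E_0,
\end{equation*}
where $p_j$ is a polynomial of degree at most $j$ with $p_j(0)=0$. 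Writing $p_j(t) = \sum_{k=1}^j c_k t^k$, we see that $\richIterateVector^j - \richIterateVector^0$ is a linear combination of $({\bf BA})^k E_0$ for $k=1,\dots,j$, which by definition of $\mathcal{K}_j$ (with ${\bf u}-{\bf u}^0 = E_0$, since the two methods start from the same initial vector) lies in $\mathcal{K}_j$. Hence $\richIterateVector^j \in {\bf u}^0 + \mathcal{K}_j$.

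Second, I apply the GMRES minimization property \cref{GMRES-minimization}: since ${\bf u}^j$ minimizes $\lVert {\bf BA}({\bf u}-{\bf v})\rVert_b$ over all ${\bf v}\in {\bf u}^0 + \mathcal{K}_j$, and since $\richIterateVector^j$ is one admissible candidate in this affine subspace by the previous step, the inequality
\begin{equation*}
\lVert {\bf BA}({\bf u} - {\bf u}^{j}) \rVert_b \;\leq\; \lVert {\bf BA}({\bf u} - \richIterateVector^j) \rVert_b
\end{equation*}
follows immediately.

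There is no serious obstacle here; the only point that requires a moment of care is verifying that $p_j(0)=0$ so that $\richIterateVector^j-\richIterateVector^0$ contains no $({\bf BA})^0$ term and therefore genuinely lies in $\mathcal{K}_j$ (rather than merely in $\mathrm{span}\{E_0\}+\mathcal{K}_j$). This is clear from the identity ${\bf I}-({\bf I}-{\bf BA})^j$ evaluated at ${\bf BA}={\bf 0}$.
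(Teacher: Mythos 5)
Your proof is correct and follows the same route as the paper: the paper simply asserts that $\richIterateVector^j \in {\bf u}^0 + \mathcal{K}_j$ ``by an easy induction'' and then invokes the GMRES minimization property, whereas you spell out that induction explicitly via the error recursion $E_{j+1}=({\bf I}-{\bf BA})E_j$ and the polynomial identity with $p_j(0)=0$. No gaps; your version is just a more detailed account of the same argument.
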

\begin{proof}
By definition, ${\bf u}^{j}$ minimizes $\lVert {\bf BA}({\bf u} - {\bf v}) \rVert_b$ over all ${\bf v} \in {\bf u}^0 + \mathcal{K}_j$. On the other hand, $\richIterateVector_j$ lies in $ {\bf u}^0 + \mathcal{K}_j$ as can be shown by an easy induction. 
    \end{proof}

In the rest of this subsection, we derive the rate of convergence for the GMRES applied to \cref{preconditioned-system}. For any ${\bf v}\in \mathbb{R}^{n}$ and ${\bf W}\in \mathbb{R}^{n\times n}$, we define the $a$-norm of ${\bf v}$ as $\Vert {\bf v}\Vert_{a}:= \sqrt{{\bf v}^{T}{\bf A}{\bf v}}$, and denote by $\Vert {\bf W}\Vert_{a}$ the induced matrix norm. In general, the $a$-norm is different from the $b$-norm applied within GMRES which is often the Euclidean norm. In order to switch between these two norms in the analysis, we need the following assumption concerning their equivalence.

\begin{assumption}\label{norm-equiv}
     There exist constants $\normEquivLow, \normEquivHigh > 0$ such that for all ${\bf v} \in \mathbb{R}^n$,
    $$
        \normEquivLow \lVert {\bf v} \rVert _b \leq \lVert {\bf v} \rVert _a \leq \normEquivHigh \lVert {\bf v} \rVert _b.
    $$    
\end{assumption}

\begin{remark} \label{lagrangeBasis}
Assume that the family of meshes $\{\tau_h\}$ is quasiuniform, and that $\Vert\cdot\Vert_{b}$ is the Euclidean norm. It can be shown, by using inverse and norm equivalence estimates, that for all ${\bf v}\in \mathbb{R}^{n}$,
    $$
        \underline{C} \beta^{-1/2} h^{-d/2+1} \lVert {\bf v} \rVert_{b} \leq \lVert {\bf v} \rVert _{a} \leq \overline{C} \alpha^{-1/2} h^{-d/2}  \lVert {\bf v} \rVert _{b},
    $$
where constants $\underline{C}, \overline{C} > 0$ are independent of all parameters.
\end{remark}



The following lemma gives an upper bound on the condition number (in the $a$-norm) of the preconditioned matrix ${\bf BA}$, which is useful in the analysis below.
\begin{lemma} \label{conditionnumber}
   Let $\Lambda$ be defined by \cref{Lamdba}, and ${\bf B}$ as defined in \cref{preconditioner}. Then,
    \begin{equation} \label{condition-estimate}
        \lVert {\bf I} - {\bf BA} \rVert_a \leq \rate. 
    \end{equation}
    Moreover, if $\Lambda <1$, then
    \begin{equation}\label{cn-bound}
      \lVert {\bf BA} \rVert_a \lVert ({\bf BA})^{-1} \rVert_a  \leq \frac{1+\Lambda}{1-\Lambda}.
    \end{equation}
\end{lemma}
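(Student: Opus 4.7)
My plan is to reduce both bounds to the operator-level estimate of \cref{cor1} via the natural identification between vectors in $\mathbb{R}^{n}$ and functions in $V_{h}$ through the basis $\{\psi_{k}\}_{k=1}^{n}$. For any ${\bf v} \in \mathbb{R}^{n}$, let $v = \sum_{k=1}^{n} {\bf v}_{k}\psi_{k} \in V_{h}$ be the associated finite element function. By construction of $\bf A$, we have $\lVert {\bf v} \rVert_{a}^{2} = {\bf v}^{T}{\bf A}{\bf v} = a(v,v) = \lVert v \rVert_{a}^{2}$, so this identification is an isometry. The derivation of ${\bf G} = {\bf B}{\bf A}$ in the previous subsection was precisely set up so that multiplication by $\bf G$ on coordinate vectors corresponds to application of the MS-GFEM map $G$ on functions. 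Hence for any ${\bf v} \in \mathbb{R}^{n}$, the vector $({\bf I} - {\bf B}{\bf A}){\bf v}$ corresponds to $v - G(v) \in V_{h}$.

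With this identification in hand, the first bound is immediate: for arbitrary ${\bf v} \in \mathbb{R}^{n}$,
\begin{equation*}
\lVert ({\bf I} - {\bf B}{\bf A}){\bf v} \rVert_{a} = \lVert v - G(v) \rVert_{a} \leq \Lambda \lVert v \rVert_{a} = \Lambda \lVert {\bf v} \rVert_{a},
\end{equation*}
by \cref{cor1}, which gives $\lVert {\bf I} - {\bf B}{\bf A} \rVert_{a} \leq \Lambda$.

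For the second bound, I would use a standard Neumann-series argument. From \cref{condition-estimate} and the assumption $\Lambda < 1$, we obtain by the triangle inequality
\begin{equation*}
\lVert {\bf B}{\bf A} \rVert_{a} \leq \lVert {\bf I} \rVert_{a} + \lVert {\bf I} - {\bf B}{\bf A} \rVert_{a} \leq 1 + \Lambda.
\end{equation*}
Moreover, since $\lVert {\bf I} - {\bf B}{\bf A} \rVert_{a} \leq \Lambda < 1$, the matrix ${\bf B}{\bf A}$ is invertible with $({\bf B}{\bf A})^{-1} = \sum_{k=0}^{\infty}({\bf I} - {\bf B}{\bf A})^{k}$, and therefore
\begin{equation*}
\lVert ({\bf B}{\bf A})^{-1} \rVert_{a} \leq \sum_{k=0}^{\infty} \Lambda^{k} = \frac{1}{1 - \Lambda}.
\end{equation*}
Multiplying the two bounds yields \cref{cn-bound}.

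The only point that requires any care is the identification between the matrix action of $\bf G$ on $\mathbb{R}^{n}$ (with the $a$-norm) and the operator $G$ on $V_{h}$ (with the $a$-norm), including that the local factors $\widetilde{\bf R}_{i}^{T}\widetilde{\boldsymbol{\chi}}_{i}\widetilde{\bf A}_{i}^{-1}\widetilde{\bf R}_{i}{\bf A}$ and ${\bf R}_{S}^{T}{\bf A}_{S}^{-1}{\bf R}_{S}{\bf A}$ correctly represent $\widetilde{\chi}_{h,i}\pi_{i}$ and $\pi_{S}$, respectively. This is standard but worth noting: once it is in place, both estimates follow purely from \cref{cor1} and elementary operator-norm manipulations, with no further analysis of the MS-GFEM machinery required.
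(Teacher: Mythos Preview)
Your proof is correct and follows essentially the same route as the paper: both use \cref{cor1} to obtain $\lVert {\bf I} - {\bf BA}\rVert_a \leq \Lambda$, then bound $\lVert {\bf BA}\rVert_a$ by the triangle inequality and $\lVert ({\bf BA})^{-1}\rVert_a$ via the Neumann series. The only difference is that you make the vector--function identification and the correspondence ${\bf G}\leftrightarrow G$ more explicit, which the paper leaves implicit.
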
    
    \begin{proof}
        From \cref{cor1} we deduce that for all ${\bf v}\in \mathbb{R}^{n}$,
        $
            \lVert ({\bf I} - {\bf BA}){\bf v} \rVert_a \leq \rate \lVert {\bf v} \rVert_a,
        $
        which immediately yields
        $
           \lVert {\bf I} - {\bf BA} \rVert_a \leq \rate,
        $
        and thus $\lVert {\bf BA} \rVert_a \leq 1+\Lambda$. To bound  $\lVert ({\bf BA})^{-1} \rVert_a$, we notice that if $\Lambda <1$, then the Neumann series
        \begin{equation*}
            \sum_{i=0}^{\infty} ({\bf I} - {\bf BA})^{i}
        \end{equation*}
converges, and it holds that
        \begin{equation*}
           ({\bf BA})^{-1}=\sum_{i=0}^{\infty} ({\bf I} - {\bf BA})^{i}.
        \end{equation*}
        Therefore, we have $\lVert ({\bf BA})^{-1} \rVert_a \leq (1-\lVert {\bf I} - {\bf BA} \rVert_a)^{-1}\leq (1-\Lambda)^{-1}$, which completes the proof of \cref{cn-bound}.
    \end{proof}

Now we are ready to establish the convergence rate for GMRES applied to \cref{preconditioned-system}.

\begin{theorem}\label{gmresconvergence}
Let $\{ {\bf u}^{j} \}_{j\in \mathbb{N}}$ be the sequence of approximate solutions generated by GMRES, and let $\Lambda$ be defined by \cref{Lamdba}. Then, if $\Lambda<1$, 
    $$
        \lVert {\bf BA}({\bf u} - {\bf u}^{j}) \rVert_b \leq \rate^j \left( \frac{1+\rate}{1-\rate}\right)\frac{b_2}{b_1}  \lVert {\bf BA}({\bf u} - {\bf u}^{0}) \rVert_b,\quad j=1,2\cdots,
    $$
where $\normEquivLow, \normEquivHigh > 0$ are given in \cref{norm-equiv}.   
    \begin{proof}
    Using \cref{gmresbetter} and \cref{norm-equiv} yields
        \begin{align*}
            \lVert {\bf BA}({\bf u} - {\bf u}^{j}) \rVert_b 
            \leq \lVert {\bf BA}({\bf u} - {\bf v}^{j}) \rVert_b
            \leq \frac{1}{b_1}\lVert {\bf BA} \rVert_a \lVert {\bf u} - {\bf v}^{j} \rVert_a.
            \end{align*}
Combining the convergence estimate for the iterative MS-GFEM in \cref{cor2}, the initial guess $\richIterateVector^0 = {\bf u}^0$, and \cref{norm-equiv}, we obtain
       \begin{align*}
           \lVert {\bf u} - \richIterateVector^j \rVert_a
            &\leq  \rate^j \lVert {\bf u} - \richIterateVector^0 \rVert_a\leq  \rate^j \lVert ({\bf BA})^{-1} \rVert_a \lVert {\bf BA}({\bf u} - {\bf u}^0) \rVert_a \\
            & \leq  b_{2}\rate^j \lVert ({\bf BA})^{-1}\rVert_a\lVert {\bf BA}({\bf u} - {\bf u}^0) \rVert_b.
        \end{align*}
Combining the two estimates above and using \cref{conditionnumber} complete the proof. 
    \end{proof}
\end{theorem}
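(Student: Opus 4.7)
The plan is to exploit the minimisation property of GMRES to bound its residual by the residual produced by the iterative MS-GFEM after $j$ steps, since by \cref{gmresbetter} the latter is just one specific candidate vector in ${\bf u}^0+\mathcal{K}_j$. Concretely, denoting by ${\bf v}^j$ the iterate of the MS-GFEM iteration launched from the same initial guess ${\bf v}^0={\bf u}^0$, the first step is to write
\[
\lVert {\bf BA}({\bf u}-{\bf u}^{j})\rVert_b \;\leq\; \lVert {\bf BA}({\bf u}-{\bf v}^{j})\rVert_b.
\]

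The second step is to transfer this residual from the $b$-norm (the one in which GMRES optimises) into the energy $a$-norm (the one in which the MS-GFEM contraction rate is available), then separate out the matrix norm: using the lower bound from \cref{norm-equiv},
\[
\lVert {\bf BA}({\bf u}-{\bf v}^{j})\rVert_b \;\leq\; b_1^{-1}\lVert {\bf BA}\rVert_a\,\lVert {\bf u}-{\bf v}^{j}\rVert_a.
\]
The third step is to iterate \cref{cor2} to obtain $\lVert {\bf u}-{\bf v}^{j}\rVert_a \leq \Lambda^{j}\lVert {\bf u}-{\bf v}^{0}\rVert_a$, and then to translate the initial energy error back into a residual by writing ${\bf u}-{\bf v}^{0}=({\bf BA})^{-1}{\bf BA}({\bf u}-{\bf u}^{0})$ and applying the upper bound from \cref{norm-equiv}:
\[
\lVert {\bf u}-{\bf v}^{0}\rVert_a \;\leq\; \lVert ({\bf BA})^{-1}\rVert_a\,\lVert {\bf BA}({\bf u}-{\bf u}^{0})\rVert_a \;\leq\; b_2\,\lVert ({\bf BA})^{-1}\rVert_a\,\lVert {\bf BA}({\bf u}-{\bf u}^{0})\rVert_b.
\]

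Finally, stringing the three chains of inequalities together produces the factor $(b_2/b_1)\,\lVert {\bf BA}\rVert_a\lVert({\bf BA})^{-1}\rVert_a\,\Lambda^{j}$ in front of $\lVert {\bf BA}({\bf u}-{\bf u}^{0})\rVert_b$; invoking the bound $\lVert {\bf BA}\rVert_a\lVert({\bf BA})^{-1}\rVert_a \leq (1+\Lambda)/(1-\Lambda)$ from \cref{conditionnumber}, which is where the hypothesis $\Lambda<1$ enters (both to make the Neumann series converge and to guarantee invertibility of ${\bf BA}$), gives the claimed estimate.

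The only real obstacle is the mismatch between the norm in which GMRES minimises (here $\lVert\cdot\rVert_b$, typically Euclidean) and the energy norm in which the MS-GFEM contraction is proven; handling this cleanly is precisely why \cref{norm-equiv} was introduced and why the extra factors $b_2/b_1$ and $(1+\Lambda)/(1-\Lambda)$ — the latter essentially a condition number of ${\bf BA}$ in the $a$-norm — unavoidably appear. Everything else is a bookkeeping exercise that chains together \cref{gmresbetter}, \cref{cor2}, \cref{conditionnumber}, and \cref{norm-equiv} in the order above.
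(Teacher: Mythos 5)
Your proposal is correct and follows essentially the same route as the paper's own proof: bound the GMRES residual by the MS-GFEM iterate's residual via \cref{gmresbetter}, switch to the $a$-norm with \cref{norm-equiv}, contract with \cref{cor2}, return to the initial residual through $({\bf BA})^{-1}$, and invoke \cref{conditionnumber} for the factor $(1+\Lambda)/(1-\Lambda)$. Nothing further is needed.
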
 

The following convergence estimate follows from \cref{gmresconvergence} and \cref{lagrangeBasis}.  
\begin{corollary}
Let $\Vert\cdot\Vert_{b}$ be the Euclidean norm, and let $\Lambda < 1$. Suppose that the family of meshes $\{\tau_h\}$ is quasiuniform. Then, GMRES applied to the preconditioned system \cref{preconditioned-system} satisfies the bound
\begin{equation*}
        \lVert {\bf BA}({\bf u} - {\bf u}^{j+k}) \rVert_{b} \leq \rate^j \lVert  {\bf BA}({\bf u} - {\bf u}^{0}) \rVert_{b}
\end{equation*}
with an integer $k$ that grows at most proportionally to  $\log(\beta/\alpha) + \log(h^{-1})$.
\end{corollary}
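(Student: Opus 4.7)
The plan is to apply Theorem \ref{gmresconvergence} at iteration index $j+k$ and absorb the constant prefactor into the extra factor $\Lambda^k$. By Theorem \ref{gmresconvergence},
\begin{equation*}
\lVert {\bf BA}({\bf u} - {\bf u}^{j+k}) \rVert_b \leq \Lambda^{j+k}\left(\frac{1+\Lambda}{1-\Lambda}\right)\frac{b_2}{b_1}\lVert {\bf BA}({\bf u} - {\bf u}^{0}) \rVert_b,
\end{equation*}
so the desired bound $\Lambda^{j}\lVert {\bf BA}({\bf u} - {\bf u}^{0}) \rVert_b$ holds as soon as $\Lambda^{k}\,(1+\Lambda)\,b_2 \leq (1-\Lambda)\,b_1$. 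Since $\Lambda<1$, taking logarithms and solving for $k$ shows it suffices to choose any integer
\begin{equation*}
k \geq \frac{\log\!\big((1+\Lambda)b_2\big) - \log\!\big((1-\Lambda)b_1\big)}{\log(1/\Lambda)}.
\end{equation*}

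Next I would substitute the norm-equivalence constants from Remark \ref{lagrangeBasis}, namely $b_1 = \underline{C}\,\beta^{-1/2} h^{-d/2+1}$ and $b_2 = \overline{C}\,\alpha^{-1/2}h^{-d/2}$, to get
\begin{equation*}
\frac{b_2}{b_1} = \frac{\overline{C}}{\underline{C}}\sqrt{\beta/\alpha}\,h^{-1}.
\end{equation*}
Taking logarithms then yields $\log(b_2/b_1) = \tfrac{1}{2}\log(\beta/\alpha) + \log(h^{-1}) + O(1)$. Plugging this into the lower bound on $k$ above, the $\Lambda$-dependent terms contribute an additive constant (since $\Lambda<1$ is fixed), and therefore $k$ can be taken to grow at most proportionally to $\log(\beta/\alpha)+\log(h^{-1})$.

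There is no real obstacle here; the argument is entirely routine, amounting to asking how many iterations are needed for the geometric factor $\Lambda^k$ to swallow the constant $\tfrac{(1+\Lambda)b_2}{(1-\Lambda)b_1}$. The only mild subtlety is to make clear that the $\Lambda$-dependent piece of the numerator $\log\!\big((1+\Lambda)/(1-\Lambda)\big)$ and the multiplicative constant $\overline{C}/\underline{C}$ from the inverse/norm-equivalence estimates are independent of $h$ and of the contrast $\beta/\alpha$, so that they contribute only an $O(1)$ offset and do not spoil the claimed proportionality.
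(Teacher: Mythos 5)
Your proposal is correct and is exactly the argument the paper intends: the corollary is stated as an immediate consequence of \cref{gmresconvergence} and \cref{lagrangeBasis}, and your computation (choosing $k$ so that $\Lambda^{k}$ absorbs the prefactor $\tfrac{(1+\Lambda)b_2}{(1-\Lambda)b_1}$, with $b_2/b_1 \sim \sqrt{\beta/\alpha}\,h^{-1}$ from the quasiuniformity-based norm equivalence) is precisely the omitted routine verification. The only point worth keeping explicit, which you already flag, is that the proportionality constant for $k$ depends on the fixed $\Lambda<1$ through $1/\log(1/\Lambda)$ but not on $h$ or the contrast.
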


\subsection{Comparison to GenEO}\label{comparison-to-geneo}
The coarse space within MS-GFEM was motivated by the GenEO coarse space \cite{spillane2014abstract}. GenEO is a method of constructing robust coarse spaces via generalized eigenproblems in the overlaps in the two-level additive Schwarz setting. In this subsection, we will compare the MS-GFEM preconditioner with the GenEO preconditioner. 



The GenEO coarse space is based on local eigenproblems similar to \cref{eigenproblemdiscrete}. With the notation from \cref{modelproblem}, for each subdomain $\omega_i$, $i=1,\cdots,M$, the GenEO eigenproblem is defined by: Find $\lambda_i\in \mathbb{R}\cup \{+\infty\}$ and $\phi_{i} \in V_{h}(\omega_i)$ such that
\begin{equation} \label{eigenproblemgeneo}
   a_{\omega_i^{o}}\big(\chi_{h,i} (\phi_{i}), \,\chi_{h,i} (v) \big)=\lambda_{i} \,a_{\omega_i}(\phi_{i}, v) \quad \tforall v \in V_{h}(\omega_i),
\end{equation}
where $\omega_i^{o}$ denotes the overlapping zone of subdomain $\omega_i$, i.e.,
 $$
    \omega_i^{o} := \{ x \in \omega_i \ | \ x \in \omega_j \text{ for some } j \neq i \}.
 $$
As in MS-GFEM, the GenEO coarse space $V_{H}$ is built by gluing selected local eigenfunctions together with the partition of unity. Let ${\bf R}_{H}^{T}$ and ${\bf R}_{i}^{T}$ ($1\leq i\leq M$) be the matrix representations of the embedding $V_{H}\rightarrow V_{h}$ and the extensions $V_{h,0}(\omega_i)\rightarrow V_{h}$, respectively. Then, the standard two-level additive Schwarz preconditioner with the GenEO coarse space is defined by
\begin{equation} \label{preconditionergeneo}
{\bf M}^{-1}_{AS,2} = \sum_{i=1}^{M} {\bf R}_i^T ({\bf R}_{i}{\bf A}{\bf R}_{i}^{T})^{-1} {\bf R}_i
    + {\bf R}_{H}^T ({\bf R}_{H}{\bf A}{\bf R}_{H}^{T})^{-1} {\bf R}_{H}.
\end{equation}
It was shown \cite[Theorem 3.22]{spillane2014abstract} that the condition number of the preconditioner ${\bf M}^{-1}_{AS,2}$ can be bounded by 
\begin{equation}\label{conditionnumber-geneo}
        \kappa({\bf M}^{-1}_{AS,2}{\bf A}) \leq (1 + \xi) \left( 2 + \xi (2\xi + 1) \right)\max_{1\leq i\leq M} \left( 1 + \lambda_{i,m_i+1}\right),
\end{equation}
where $\xi$ is defined by \cref{coloring-constant}, and $\lambda_{i,m_i +1}$ represents the eigenvalue corresponding to the first eigenfunction on subdomain $\omega_i$ that was not included in the GenEO coarse space. Since ${\bf M}^{-1}_{AS,2}$ is symmetric, it can be used with the Conjugate Gradient (CG) method with the convergence rate 
\begin{equation*}
{\Big(\sqrt{\kappa({\bf M}^{-1}_{AS,2}{\bf A})}-1\Big)}\Big /{\Big(\sqrt{\kappa({\bf M}^{-1}_{AS,2}{\bf A})}+1\Big)}.   
\end{equation*}

Based on the description above, we see that there are several important differences between the two preconditioners. First, the local eigenproblems, while similar in form, are essentially different. The GenEO eigenproblems use no oversampling, and are posed on usual finite element spaces instead of $a$-harmonic subspaces. Consequently, the corresponding eigenvalues do not decay rapidly to $0$ as in MS-GFEM. Indeed, numerical experiments have shown that the spectra of the GenEO eigenproblems typically have an accumulation point around $1$. On the other hand, it requires more work to solve the MS-GFEM eigenproblems than the GenEO eigenproblems. Second, the MS-GFEM preconditioner is a RAS type preconditioner with the local solves performed on the oversampling domains. Moreover, the coarse space is added to the one-level method multiplicatively. This allows us to make full use of the MS-GFEM approximation theory. In contrast, the GenEO preconditioner is a standard, fully additive two-level Schwarz preconditioner. Finally, thanks to the exponential convergence property of MS-GFEM, the GMRES method preconditioned with MS-GFEM converges much faster than the CG (and also GMRES) method preconditioned with GenEO (indeed, it can be made 'arbitrarily' fast by enriching the coarse space). To summarize, the MS-GFEM preconditioner yields a much faster convergence with a more expensive setup. Whether it leads to reduced total computational cost depends on the problem to be solved -- in general, the more the preconditioner is reused, the more computational savings there will be.

We conclude this subsection by noting that conversely, we can use the MS-GFEM coarse space in a standard additive Schwarz method, or the GenEO coarse space in a restricted hybrid method. Numerical results in \cref{experimentSection} will show that for both preconditioners, the restricted hybrid method has a much faster convergence speed than the standard additive one.

\section{Abstract theory}\label{abstractFramework}
In this section, we generalize the results in \cref{iterativesection} to abstract, symmetric positive definite problems. To do this, we first present the abstract theory of MS-GFEM developed in \cite{ma2023unified}.

\subsection{Abstract MS-GFEM}
Whereas the abstract MS-GFEM was designed in both the finite and infinite dimensional settings, we restrict ourselves to the finite-dimensional case. Let $V_{0}(\Omega)$ be a finite-dimensional space of functions defined on $\Omega$, and let $a(\cdot,\cdot)$ be a symmetric, positive definite bilinear form on $V_{0}(\Omega)$. Given an element $F\in V_{0}(\Omega)^{\prime}$, we consider the problem of finding $u\in V_{0}(\Omega)$ such that
\begin{equation}\label{eq:abstract_problem}
    a(u, v) = F(v)\quad \forall v\in V_{0}(\Omega).
\end{equation}

To formulate the abstract MS-GFEM, we first introduce a family of local function spaces, local bilinear forms, and related operators. These notions are often standard in the definition of domain decomposition methods.
\begin{assumption}\label{abstract_assumption}
There exist function spaces $\big\{V(D),\;V_{0}(D): D\subset\Omega \big\}$ such that
\vspace{0.5ex}
\begin{itemize}
\item[(i)] $(\mathrm{Continuous \;\,inclusion}).$ For any subdomain $D\subset \Omega$, $V_{0}(D)\subset V(D)$. Moreover, $V(\Omega)=V_{0}(\Omega)$.

\vspace{0.5ex}
\item[(ii)] $(\mathrm{Zero\;\, extension}).$ For any subdomains $D\subset D^{\ast}$, there exists a linear operator $E_{D,D^{\ast}}:V_{0}(D)\rightarrow V_{0}(D^{\ast})$ such that 
\begin{equation*}
\big\Vert E_{D,D^{\ast}}(u) \big\Vert_{V_{0}(D^{\ast})} =  \Vert u\Vert_{V_{0}(D)}\quad \forall u\in  V_{0}(D).
\end{equation*}
In particular, $\Vert u\Vert_{V_{0}(D)} = \big\Vert E_{D,\Omega}(u) \big\Vert_{V_{0}(\Omega)}$.

\vspace{0.5ex}
\item[(iii)] $(\mathrm{Restriction}).$ For any subdomains $D\subset D^{\ast}$, there exists an operator $R_{D^{\ast},D}:V(D^{\ast})\rightarrow V(D)$ satisfying $R_{D^{\ast},D}\big(E_{D,D^{\ast}}(u)\big)=u$ for $u\in V_{0}(D)$. Moreover,
\begin{equation*}
\begin{array}{lll}
{\displaystyle R_{D^{\ast},D}\circ R_{D^{\ast\ast},D^{\ast}} = R_{D^{\ast\ast},D}\quad \;\;\;\forall D\subset D^{\ast}\subset D^{\ast\ast},}\\[2mm]
{\displaystyle \big\Vert R_{D^{\ast},D}(u) \big\Vert_{V(D)}\leq \Vert u\Vert_{V(D^{\ast})} \quad \forall u\in V(D^{\ast}). }
\end{array}
\end{equation*}
If no ambiguity arises, we simply write $R_{D}$ and denote $R_{D}(u)$ by $u|_{D}$.

\vspace{0.5ex}
\item[(iv)] $(\mathrm{Local\;\, bilinear \;\,forms}).$ For any subdomain $D$, there is a bounded, symmetric positive semi-definite bilinear form $a_{D}(\cdot,\cdot)$ on $V(D)$ with $a_{\Omega}(\cdot,\cdot) = a(\cdot,\cdot)$. In addition, if $D\subset D^{\ast}$, then for any $u\in V(D^{\ast})$, $v\in V_{0}(D)$, 
\begin{equation*}
a_{D}(u|_{D},\,v) = a_{D^{\ast}}\big(u, \,E_{D,D^{\ast}}(v)\big).
\end{equation*}
\end{itemize}
\end{assumption}
For ease of notation, we simply identify $u\in V_{0}(D)$ with its zero extension $E_{D,D^{\ast}}(u)$. Moreover, for any $D\subset \Omega$, we define 
\begin{equation*}
\Vert u\Vert_{a,D} := \sqrt{a(u,u)}  \quad \text{for all}\;\, u\in V(D).
\end{equation*}
If $D=\Omega$, we simply write $\Vert u\Vert_{a}$. Note that the local bilinear form $a_{D}(\cdot,\cdot)$ is positive definite on $V_{0}(D)$.

Let $\{ \omega_{i} \}_{i=1}^{M}$ be a collection of overlapping subdomains of $\Omega$ such that $\cup_{i=1}^{M} \omega_{i} = \Omega$. We now introduce an abstract partition of unity as follows.
\begin{definition}\label{def:2-1-1}
Let $\chi_{i}: V(\omega_{i})\rightarrow V_{0}(\omega_{i})$, $i=1,\ldots,M$, be a set of bounded linear operators such that 
\begin{equation*}
u=\sum_{i=1}^{M}\chi_{i}(u|_{\omega_{i}}) \qquad {\rm for \;\,all} \;\,u\in V_{0}(\Omega).
\end{equation*}
Then $\{\chi_{i}\}_{i=1}^{M}$ is called an abstract partition of unity subordinate to $\{ \omega_{i} \}_{i=1}^{M}$. 
\end{definition}

For each subdomain $\omega_{i}$, $i=1,\cdots,M$, let a local particular function $u_{i}^{p}\in V(\omega_{i})$ and a local approximation space $S_{m_i}(\omega_{i}) \subset V(\omega_{i})$ of dimension $m_{i}$ be given. As in the classical GFEM, the global particular function and the global approximation space are defined by gluing the local components together using the partition of unity:
\begin{equation*}
u^{p}=\sum_{i=1}^{M}\chi_{i}(u_{i}^{p}),\quad S_{m}(\Omega) =\Big\{\sum_{i=1}^{M}\chi_{i}(\phi_{i})\,:\, \phi_{i}\in S_{m_{i}}(\omega_{i})\Big\}.
\end{equation*}
The GFEM approximation of problem \cref{eq:abstract_problem} is then defined similarly as before by:
\begin{equation}\label{abstract_GFEM_solution}
{\rm Find}\;\;u^{G}\in u^{p}+ S_{m}(\Omega)\quad \;{\rm such \;\;that}\;\quad a(u^{G},v) = F(v)\quad \forall v\in S_{m}(\Omega).
\end{equation}

Before stating the fundamental approximation theorem for the abstract GFEM, we introduce the coloring constant associated with the open cover $\{\omega_{i}\}_{i=1}^{M}$, which is typically equal to the maximal number of $\omega_{i}$'s overlapping at any one point.
\begin{definition}\label{def:coloring_constant}
Let $\zeta$ be the smallest positive integer such that the set of the subdomains $\{\omega_i\}_{i=1}^{M}$ can be partitioned into $\zeta$ classes $\{\mathcal{C}_{j}, \;1\leq j\leq \zeta\}$ that satisfy
\begin{equation*}
\{\omega_{j_{1}},\,\omega_{j_{2}}\}\subset \mathcal{C}_{j} \;\;\text{for some}\;\;\mathcal{C}_{j}\; \Longleftrightarrow\; a(u_{1},u_{2}) = 0\;\;\; \forall u_{1}\in V_{0}(\omega_{j_{1}}),\;u_{2}\in V_{0}(\omega_{j_{2}}).
\end{equation*}
Then $\zeta$ is called the coloring constant associated with the open cover $\{\omega_{i}\}_{i=1}^{M}$.
\end{definition}

\begin{theorem}\label{thm:2-1}
Let $u\in V_{0}(\Omega)$ be the solution of \cref{eq:abstract_problem}, and $u^{G}$ be the GFEM approximation defined by \cref{abstract_GFEM_solution}. Assuming that for each $i=1,\cdots,M$, 
\begin{equation*}
\inf_{v_{i}\in u_{i}^{p}+ S_{m_{i}}(\omega_{i})} \big\Vert \chi_{i}(u|_{\omega_i} - v_{i})\big\Vert_{a,\omega_i} \leq \varepsilon_{i},
\end{equation*}
then
\begin{equation*}
\Vert u - u^{G}\Vert_{a} = \inf_{v\in u^{p}+ S_{m}(\Omega)}\Vert u - v\Vert_{a} \leq \Big(\zeta \sum_{i=1}^{M}\varepsilon_{i}^{2}\Big)^{1/2},
\end{equation*}
where $\zeta$ is the coloring constant defined in \cref{def:coloring_constant}.
\end{theorem}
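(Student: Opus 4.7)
The plan is to follow the standard two-part strategy for GFEM approximation theorems: first reduce to a best-approximation problem via Galerkin orthogonality, then construct an explicit good approximant by gluing the local near-best approximants using the partition of unity, and finally control the resulting sum with the coloring constant.

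The first equality $\|u-u^G\|_a = \inf_{v\in u^p + S_m(\Omega)}\|u-v\|_a$ is pure Galerkin. Since $a(\cdot,\cdot)$ is symmetric positive definite on $V_0(\Omega)$ and $u^G$ is defined by $a(u^G,v)=F(v)=a(u,v)$ for all $v\in S_m(\Omega)$, one obtains the orthogonality $a(u-u^G, v) = 0$ for all $v\in S_m(\Omega)$, so $u^G$ is the $a$-orthogonal projection of $u$ onto the affine space $u^p + S_m(\Omega)$; this is the standard Céa-type identity.

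For the inequality, I would, given $\varepsilon>0$, choose for each $i$ an element $v_i \in u_i^p + S_{m_i}(\omega_i)$ with $\|\chi_i(u|_{\omega_i}-v_i)\|_{a,\omega_i}\le \varepsilon_i+\varepsilon$, and set $v:=\sum_{i=1}^M \chi_i(v_i) \in u^p+S_m(\Omega)$. Using the partition of unity property $u=\sum_i \chi_i(u|_{\omega_i})$ from \cref{def:2-1-1}, the error decomposes as
\begin{equation*}
u-v = \sum_{i=1}^M \chi_i(u|_{\omega_i}-v_i) =: \sum_{i=1}^M e_i,
\end{equation*}
where each $e_i\in V_0(\omega_i)$ (identified with its zero extension to $\Omega$). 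By the zero-extension property in \cref{abstract_assumption}(ii), $\|e_i\|_a = \|\chi_i(u|_{\omega_i}-v_i)\|_{a,\omega_i}$.

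The main technical step, and the only nontrivial point, is introducing the coloring constant $\zeta$. Partitioning the indices into the $\zeta$ color classes $\mathcal{C}_1,\ldots,\mathcal{C}_\zeta$ from \cref{def:coloring_constant} and setting $w_k := \sum_{i:\omega_i\in \mathcal{C}_k} e_i$, the defining property of the classes (orthogonality in $a$ of any two $V_0(\omega_{j_1}), V_0(\omega_{j_2})$ with $\{\omega_{j_1},\omega_{j_2}\}\subset \mathcal{C}_k$) gives
\begin{equation*}
\|w_k\|_a^2 = \sum_{i:\omega_i\in \mathcal{C}_k}\|e_i\|_a^2 = \sum_{i:\omega_i\in \mathcal{C}_k}\|\chi_i(u|_{\omega_i}-v_i)\|_{a,\omega_i}^2.
\end{equation*}
Then a triangle inequality across the $\zeta$ colors followed by the discrete Cauchy–Schwarz inequality yields
\begin{equation*}
\|u-v\|_a^2 \le \Bigl(\sum_{k=1}^\zeta \|w_k\|_a\Bigr)^2 \le \zeta \sum_{k=1}^\zeta\|w_k\|_a^2 = \zeta\sum_{i=1}^M \|\chi_i(u|_{\omega_i}-v_i)\|_{a,\omega_i}^2 \le \zeta \sum_{i=1}^M (\varepsilon_i+\varepsilon)^2.
\end{equation*}
Since $u^G$ is the best approximation in the affine space, $\|u-u^G\|_a\le \|u-v\|_a$, and letting $\varepsilon\to 0$ concludes the proof. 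The only subtle point is ensuring the orthogonality argument is applied to the zero-extended functions living in the global space $V_0(\Omega)$, which is exactly what \cref{abstract_assumption}(ii) and the coloring definition via $V_0(\omega_{j_1}),V_0(\omega_{j_2})$ are set up to deliver.
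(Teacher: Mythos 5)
Your proof is correct and is exactly the standard argument behind this theorem (the paper itself defers the proof to the cited reference \cite{ma2023unified}, where the same route is taken): Galerkin orthogonality for the equality, gluing near-optimal local approximants with the partition of unity, orthogonality within each color class, and a triangle plus Cauchy--Schwarz inequality across the $\zeta$ classes. One tiny nit: the identity $\Vert e_i\Vert_a=\Vert \chi_i(u|_{\omega_i}-v_i)\Vert_{a,\omega_i}$ for the zero extension follows from \cref{abstract_assumption}(iv) together with (iii), not from the $V_0$-norm isometry in (ii) that you cite.
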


Next we construct the local particular functions and local approximation spaces for the abstract GFEM. For each subdomain $\omega_i$, let $\omega_i^{\ast}$ be the associated oversampling domains with ${\rm dist}(\omega_i,\partial\omega_i^{\ast}\setminus\partial \Omega) > 0$. We consider the following local problems:
\begin{equation}
  \text{Find}\;\,\psi_{i}\in V_{0}(\omega^{\ast}_i) \quad \text{such that}\quad  a_{\omega_i^{\ast}}(\psi_i, v) =  F(v)\quad \forall v\in V_{0}(\omega^{\ast}_i).
\end{equation}
The local particular function on $\omega_i$ is then defined by 
\begin{equation}\label{abstract_loc_par_sol}
u_{i}^{p} = \psi_{i}|_{\omega_i}. 
\end{equation}
To construct the local approximation spaces, we define the abstract $a$-harmonic spaces
\begin{equation}\label{abstract_harmonic_space}
V_{a}(\omega_{i}^{\ast}) = \big\{u\in V(\omega_{i}^{\ast})\,:\, a_{\omega_i^{\ast}}(u,v) = 0\quad \forall v\in V_{0}(\omega_{i}^{\ast}) \big\},
\end{equation}
and consider the following local eigenproblems: Find $\lambda_{i}\in \mathbb{R}\cup \{+\infty\}$ and $\phi_{i}\in V_{a}(\omega_{i}^{\ast})$ such that
\begin{equation}\label{abstract_eigenproblem}
a_{\omega_i}\big(\chi_{i}(\phi_{i}|_{\omega_i}),\,  \chi_{i} (v|_{\omega_i})\big) = \lambda_{i}\,a_{\omega_{i}^{\ast}}(\phi_{i},  v)\quad \forall v\in V_{a}(\omega_{i}^{\ast}).
\end{equation}
The desired local approximation space $S_{n_i}(\omega_i)$ is then defined by
\begin{equation}\label{abstract_local_space}
S_{m_{i}}(\omega_i) =  {\rm span}\big\{{\phi_{i,1}}|_{\omega_i},\ldots,{\phi_{i,m_{i}}}|_{\omega_i}\big\},
\end{equation}
where $\phi_{i,j}$ denotes the $j$-th eigenfunction (with eigenvalues arranged in decreasing order) of problem \cref{abstract_eigenproblem}.




With $u_{i}^{p}$ and $S_{m_i}(\omega_i)$ constructed above, we have the following local approximation error estimates analogous to \cref{local-appro-property} provided that $m_{i}\geq l_i$, where $l_i$ denotes the dimension of the kernel of $a_{\omega_i^{\ast}}(\cdot,\cdot)$ on $V_{a}(\omega_{i}^{\ast})$.

\begin{lemma}\label{abstract-local-appro-proerty}
    Let $u$ be the solution of problem \cref{eq:abstract_problem}, and let $u_{i}^{p}$ and $S_{m_i}(\omega_i)$ $(m_i\geq l_{i})$ be defined by \cref{abstract_loc_par_sol} and \cref{abstract_local_space}, respectively. Then,
    $$
        \inf\limits_{\phi \in u_{i}^{p} + S_{m_i}(\omega_i)} \lVert \chi_{i} (u|_{\omega_i} - \phi) \rVert_{a,{\omega_i}} \leq \lambda_{i,m_i+1}^{1/2} \lVert u \rVert_{a,\omega_i^*},
    $$
where $\lambda_{i,m_i+1}$ denotes the $(m_i+1)$-th eigenvalue of problem \cref{abstract_eigenproblem}.   
\end{lemma}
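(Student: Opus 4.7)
The plan is to mimic the finite-element proof of \cref{local-appro-property} in the abstract setting, relying on \cref{abstract_assumption} to carry over the usual manipulations (zero extension, compatibility of local/global bilinear forms, Galerkin orthogonality).

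First, I would establish that $u|_{\omega_i^{\ast}} - \psi_i \in V_{a}(\omega_i^{\ast})$. For any $v \in V_0(\omega_i^{\ast})$, its extension by zero lies in $V_0(\Omega)$, so testing the global equation \cref{eq:abstract_problem} against it and using Assumption (iv) for compatibility of $a_{\omega_i^{\ast}}$ with $a$ yields $a_{\omega_i^{\ast}}(u|_{\omega_i^{\ast}}, v) = F(v)$; subtracting the defining equation for $\psi_i$ gives the claim. This also shows, by Galerkin orthogonality, that $\psi_i$ is the $a_{\omega_i^{\ast}}$-orthogonal projection of $u|_{\omega_i^{\ast}}$ onto $V_0(\omega_i^{\ast})$, so in particular $\|u|_{\omega_i^{\ast}} - \psi_i\|_{a,\omega_i^{\ast}} \leq \|u\|_{a,\omega_i^{\ast}}$.

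Next, I would use the spectral structure of \cref{abstract_eigenproblem}. Since $a_{\omega_i^{\ast}}(\cdot,\cdot)$ is symmetric positive semi-definite on $V_a(\omega_i^{\ast})$ with $l_i$-dimensional kernel, and $a_{\omega_i}(\chi_i(\cdot|_{\omega_i}), \chi_i(\cdot|_{\omega_i}))$ is symmetric positive semi-definite, the eigenproblem admits a basis $\{\phi_{i,j}\}$ of $V_a(\omega_i^{\ast})$; the $l_i$ "infinite" eigenvalues correspond to the kernel and, by assumption, are placed at the top, with $\phi_{i,1},\ldots,\phi_{i,l_i}$ spanning that kernel. The remaining (finite) eigenfunctions can be chosen $a_{\omega_i^{\ast}}$-orthonormal, and testing \cref{abstract_eigenproblem} with $v = \phi_{i,k}$ yields the orthogonality relation
\begin{equation*}
a_{\omega_i}\bigl(\chi_i(\phi_{i,j}|_{\omega_i}),\, \chi_i(\phi_{i,k}|_{\omega_i})\bigr) = \lambda_{i,j}\,\delta_{jk}
\end{equation*}
for finite eigenvalues. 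Expanding $u|_{\omega_i^{\ast}} - \psi_i = \sum_{j} c_j \phi_{i,j}$ (the kernel part being irrelevant, since $\chi_i$ applied to it contributes zero energy and is anyway absorbed once $m_i \geq l_i$), the candidate $\phi = u_i^p + \sum_{j=1}^{m_i} c_j\,\phi_{i,j}|_{\omega_i} \in u_i^p + S_{m_i}(\omega_i)$ gives $u|_{\omega_i} - \phi = \sum_{j > m_i} c_j\, \phi_{i,j}|_{\omega_i}$, and therefore
\begin{equation*}
\bigl\|\chi_i(u|_{\omega_i} - \phi)\bigr\|_{a,\omega_i}^{2}
= \sum_{j > m_i} c_j^{2}\,\lambda_{i,j}
\leq \lambda_{i,m_i+1}\sum_{j} c_j^{2}
= \lambda_{i,m_i+1}\,\|u|_{\omega_i^{\ast}} - \psi_i\|_{a,\omega_i^{\ast}}^{2}.
\end{equation*}
Combining with the Galerkin bound from the first step finishes the estimate.

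The main conceptual obstacle is the careful treatment of the kernel of $a_{\omega_i^{\ast}}(\cdot,\cdot)$ on $V_a(\omega_i^{\ast})$: one has to argue that (a) a generalized eigenbasis still exists despite the degeneracy, (b) the kernel directions contribute nothing to $\chi_i(u|_{\omega_i^{\ast}} - \psi_i)|_{\omega_i}$ beyond what is already captured when $m_i \geq l_i$, and (c) the Parseval-type identity $\sum_j c_j^2 = \|u|_{\omega_i^{\ast}} - \psi_i\|_{a,\omega_i^{\ast}}^2$ is well-defined after quotienting. Beyond this, the argument is essentially linear algebra plus Galerkin orthogonality; the axioms gathered in \cref{abstract_assumption} are precisely what make every step of the FE proof go through verbatim.
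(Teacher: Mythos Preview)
Your proposal is correct and follows exactly the standard argument underlying this result. Note that the paper does not actually give a proof of \cref{abstract-local-appro-proerty}; it merely states the lemma as the abstract analogue of \cref{local-appro-property}, which in turn is quoted from \cite[Theorem~3.3]{ma2022error}. Your write-up supplies precisely the proof one would expect: reduce to the $a$-harmonic remainder via Galerkin orthogonality, expand in the generalized eigenbasis of \cref{abstract_eigenproblem}, and use the spectral orthogonality relations. The condition $m_i \geq l_i$ is indeed only there to handle the kernel of $a_{\omega_i^{\ast}}(\cdot,\cdot)$ on $V_a(\omega_i^{\ast})$, and you have identified this correctly.
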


Finally, by combining \cref{thm:2-1} and \cref{abstract-local-appro-proerty}, we can get the same error estimate for the abstract MS-GFEM as \cref{msgfemapprox}. In particular, we define the error bound $\Lambda$ similarly to \cref{Lamdba} by 
    \begin{equation}\label{abstract_Lamdba}
     \Lambda := \Big( \xi \xi^{\ast}\max\limits_{i=1,..,M} \lambda_{i,m_i+1} \Big)^{1/2},    
    \end{equation}
where $\zeta$ and $\zeta^{\ast}$ denotes the coloring constants associated with the subdomains $\{\omega_{i}\}_{i=1}^{M}$ and the oversamling domains $\{\omega^{\ast}_{i}\}_{i=1}^{M}$, respectively.

\subsection{Exponential convergence of abstract MS-GFEM} \label{exponentialdecaysection}
The core of the abstract MS-GFEM theory is the local exponential convergence under two fundamental conditions -- a Caccioppoli-type inequality and a weak approximation property. To state these two conditions, we first assume that there exists a family of local Hilbert spaces $\{\mathcal{L}(D)\}_{D\subset\Omega}$ satisfying that (i) for any $D\subset \Omega$, $V(D)\subset \mathcal{L}(D)$, and (ii) for any $D\subset D^{\ast}$ and $u\in \mathcal{L}(D^{\ast})$, $u|_{D}\in \mathcal{L}(D)$ with $ \Vert u|_{D}\Vert_{\mathcal{L}(D)}\leq \Vert u\Vert_{\mathcal{L}(D^{\ast})}$. 

\begin{assumption}[Caccioppoli-type inequality]\label{Caccioppoli inequality}
There exists a constant $C_{\rm cac}>0$ such that for any subdomains $D\subset D^{\ast}$ with $\delta := {\rm dist} ({D},\partial D^{\ast}\setminus \partial \Omega)>0$, 
\begin{equation*}
\Vert u\Vert_{a,D} \leq ({C_{\rm cac}}/{\delta}) \Vert u\Vert_{\mathcal{L}({D}^{\ast}\setminus {D})} \quad \text{for all}\;\; u\in V_{a}({D}^{\ast}).
\end{equation*} 
\end{assumption}

\begin{assumption}[Weak approximation property]\label{weak-approximation}
Let $D\subset D^{\ast}\subset {D}^{\ast\ast}$ be subdomains of $\Omega$ such that $\hat{\delta}:={\rm dist} ({D}^{\ast},\partial D^{\ast\ast}\setminus \partial \Omega)>0$ and that ${\rm dist} ({\bm x},\partial D^{\ast}\setminus \partial \Omega)\leq \hat{\delta}$ for all ${\bm x}\in \partial D\setminus \partial \Omega$. For each $m\in \mathbb{N}$, there exists an $m$-dimensional space $Q_{m}(D^{\ast\ast})\subset \mathcal{L}({D}^{\ast\ast})$ such that for all $u\in V_{a}(D^{\ast\ast})$,
 \begin{equation}
\inf_{v\in Q_{m}(D^{\ast\ast})} \big\Vert u-v\big\Vert_{\mathcal{L}({D}^{\ast}\setminus D)} \leq C_{\rm wa}\big|\mathsf{V}_{\hat{\delta}}({D}^{\ast}\setminus D)\big|^{\alpha} m^{-\alpha}\Vert u\Vert_{a, D^{\ast}},
 \end{equation}
 where $\big|\mathsf{V}_{\hat{\delta}}({D}^{\ast}\setminus D)\big|: = {\rm vol}\big(\mathsf{V}_{\hat{\delta}}({D}^{\ast}\setminus D)\big)$, $\mathsf{V}_{\hat{\delta}}({D}^{\ast}\setminus D):= \big\{{\bm x}\in {D}^{\ast\ast}: {\rm dist}({\bm x}, {D}^{\ast}\setminus D)\leq \hat{\delta} \big\}$, and $C_{\rm wa}$ and $\alpha$ are positive constants independent of $D^{\ast\ast}$, $D^{\ast}$, $D$, and $m$.
\end{assumption}

Now we can give the central result of the abstract MS-GFEM theory, which states that the eigenvalues of problems \cref{abstract_eigenproblem} (and thus the local approximation errors) decay exponentially under the two fundamental conditions.
\begin{theorem}\cite[Theorem 3.7]{ma2023unified}
Let Assumptions~\ref{Caccioppoli inequality} and \ref{weak-approximation} be satisfied, and let $\lambda_{i,k}$ be the $k$-th eigenvalue of problem \cref{abstract_eigenproblem}. Then, there exist positive constants $k_{i}$, $b_{i}$, and $C_{i}$, such that for any $k>k_{i}$,
\begin{equation}
\lambda_{i,k}^{1/2}\leq C_{i} \,e^{-b_{i}k^{\alpha}},   
\end{equation}
where $C_{i}$ only depends on the partition of unity operator $\chi_i$, and $\alpha>0$ is the same constant as in Assumption~\ref{weak-approximation}.     
\end{theorem}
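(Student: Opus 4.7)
The plan is to reduce the bound on $\lambda_{i,k}^{1/2}$ to a Kolmogorov $n$-width statement and then construct the approximating space by a multilevel (onion-peeling) scheme that iterates the two structural hypotheses.

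First, I would translate the eigenproblem \cref{abstract_eigenproblem} into a singular value decomposition of the compact operator $P_i: V_a(\omega_i^{\ast}) \to V_0(\omega_i)$ defined by $P_i(v) = \chi_i(v|_{\omega_i})$, where the domain carries the inner product $a_{\omega_i^{\ast}}(\cdot,\cdot)$ and the range carries $a_{\omega_i}(\cdot,\cdot)$. The eigenvalues $\lambda_{i,k}$ are then exactly the squared singular values of $P_i$, and by the standard Courant--Fischer / Kolmogorov $n$-width identification,
\begin{equation*}
    \lambda_{i,k+1}^{1/2} \;=\; \inf_{\substack{W\subset V_0(\omega_i)\\ \dim W = k}} \sup_{v\in V_a(\omega_i^{\ast})} \inf_{w\in W} \frac{\|\chi_i(v|_{\omega_i}) - w\|_{a,\omega_i}}{\|v\|_{a,\omega_i^{\ast}}}.
\end{equation*}
So it suffices to exhibit, for each $k>k_i$, a $k$-dimensional subspace $W_k\subset V_0(\omega_i)$ realizing the exponential bound. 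Using boundedness of $\chi_i$ in the $a$-norm, it is enough to construct a finite-dimensional $\widetilde W_k\subset V(\omega_i)$ that approximates $v|_{\omega_i}$ (for any $v\in V_a(\omega_i^{\ast})$) exponentially well in $\|\cdot\|_{a,\omega_i}$, and then set $W_k = \chi_i(\widetilde W_k)$.

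Next, I would build $\widetilde W_k$ by an iterative peeling argument. Choose an integer $n$ and partition the buffer between $\omega_i$ and $\omega_i^{\ast}$ into $n$ concentric shells by nested subdomains $\omega_i = D_0 \subset D_1\subset\cdots\subset D_n = \omega_i^{\ast}$, each with $\mathrm{dist}(D_{j-1},\partial D_j\setminus\partial\Omega)\gtrsim \delta_i^{\ast}/n$. Starting from $u_0 := v\in V_a(\omega_i^{\ast})$, at step $j$ apply \cref{weak-approximation} on the triple $D_{n-j-1}\subset D_{n-j}\subset D_{n-j+1}$ to produce an $m$-dimensional space $Q_m^{(j)}\subset\mathcal{L}(D_{n-j+1})$ and an element $q_j\in Q_m^{(j)}$ with
\begin{equation*}
    \|u_j - q_j\|_{\mathcal{L}(D_{n-j}\setminus D_{n-j-1})} \;\leq\; C_{\rm wa}\,|\mathsf{V}_{\hat\delta}|^{\alpha}\,m^{-\alpha}\,\|u_j\|_{a,D_{n-j}}.
\end{equation*}
Set $u_{j+1} := u_j - q_j$ restricted to $D_{n-j-1}$ (implicitly taking the $a$-harmonic part so that $u_{j+1}\in V_a(D_{n-j-1})$, a step that needs to be spelled out carefully using the inclusion $V_0\subset V$). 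Then invoke \cref{Caccioppoli inequality} on the pair $D_{n-j-1}\subset D_{n-j}$ to obtain
\begin{equation*}
    \|u_{j+1}\|_{a,D_{n-j-1}} \;\leq\; \frac{C_{\rm cac}\, n}{\delta_i^{\ast}}\,\|u_j-q_j\|_{\mathcal{L}(D_{n-j}\setminus D_{n-j-1})}.
\end{equation*}
Telescoping the two bounds yields, after $n$ steps, a residual satisfying
\begin{equation*}
    \|u_n\|_{a,\omega_i} \;\leq\; \Big(\tfrac{C_{\rm cac}\, C_{\rm wa}\, n}{\delta_i^{\ast}}\,|\mathsf{V}_{\hat\delta}|^{\alpha}\,m^{-\alpha}\Big)^{n}\,\|v\|_{a,\omega_i^{\ast}},
\end{equation*}
where the approximation $\sum_j q_j$ lives in a space of dimension at most $k = nm$.

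Finally, I would optimize. Choosing $m$ large enough (e.g.\ proportional to a fixed constant times $n^{1/\alpha}$, or simply $m$ large enough so that the factor in parentheses is bounded below $e^{-1}$) yields an error of the form $C_i\,e^{-b_i n}$ with $k = nm$ and hence $n \gtrsim k^{\alpha}$ for suitable exponents, producing the claimed bound $\lambda_{i,k}^{1/2}\leq C_i\,e^{-b_i k^{\alpha}}$. The constant $C_i$ absorbs the operator norm of $\chi_i$ and the dependencies on $\delta_i^{\ast}$, while $k_i$ is the threshold beyond which the per-step contraction factor is strictly less than one.

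The main obstacle will be step 2: the careful bookkeeping required to make the residual $u_{j+1}$ lie in $V_a(D_{n-j-1})$ so that Caccioppoli is applicable. Achieving this cleanly seems to require subtracting off the $a$-harmonic part of $q_j$ (or its local approximation), which slightly modifies the approximating space but preserves its dimension. A secondary difficulty is the bookkeeping of the shell volumes $|\mathsf{V}_{\hat\delta}(\cdot)|$ through the nested construction, since they shrink as the shells do and must be controlled uniformly to ensure the per-step contraction factor is genuinely $O(m^{-\alpha})$ rather than degrading with $n$.
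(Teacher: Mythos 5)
The paper does not prove this theorem itself --- it is quoted verbatim from \cite[Theorem~3.7]{ma2023unified} --- but your strategy (reduce $\lambda_{i,k}^{1/2}$ to a Kolmogorov $n$-width of the restriction/partition-of-unity operator, then build the approximating space by peeling nested shells and alternating the weak approximation property with the Caccioppoli inequality) is exactly the strategy used there and in its predecessors. However, as written your argument does not deliver the claimed exponent, and the reason is precisely the item you dismiss as ``secondary bookkeeping.'' Your telescoped per-step contraction factor is $\tfrac{C_{\rm cac}C_{\rm wa}\,n}{\delta_i^{\ast}}\,|\mathsf{V}_{\hat\delta}|^{\alpha}m^{-\alpha}$; if, as you propose, the volume factor is merely ``controlled uniformly'' (i.e.\ bounded by a constant), then forcing this below $e^{-1}$ requires $m\gtrsim n^{1/\alpha}$, hence $k=nm\sim n^{1+1/\alpha}$ and $n\sim k^{\alpha/(1+\alpha)}$ --- your stated conclusion ``$n\gtrsim k^{\alpha}$'' is an arithmetic slip, since $\alpha/(1+\alpha)<\alpha$. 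That route only reproduces the older rate $e^{-b k^{\alpha/(1+\alpha)}}$ (cf.\ the $k^{1/(d+1)}$ in \cref{expcont} with $\alpha=1/d$). The sharper rate $e^{-b_i k^{\alpha}}$, which is the whole point of the abstract theorem and is why \cref{weak-approximation} carries the factor $|\mathsf{V}_{\hat\delta}(D^{\ast}\setminus D)|^{\alpha}$ at all, is obtained by \emph{exploiting} that this volume shrinks like $n^{-1}$ when the buffer $\delta_i^{\ast}$ is cut into $n$ shells of width $\sim\delta_i^{\ast}/n$: the per-step factor then behaves like $n^{1-\alpha}m^{-\alpha}$, so $m\gtrsim n^{(1-\alpha)/\alpha}$ suffices, $k=nm\sim n^{1/\alpha}$, and $n\sim k^{\alpha}$. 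You must keep, not discard, the decay of $|\mathsf{V}_{\hat\delta}|$.

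The other gap is the one you flag yourself: $q_j\in Q_m(\cdot)\subset\mathcal{L}(\cdot)$ need not lie in $V$ at all, so ``taking the $a$-harmonic part of $q_j$'' is not a well-defined operation and $u_j-q_j$ is not an admissible argument for \cref{Caccioppoli inequality}. The standard repair is to replace $Q_m$ by the space of $\mathcal{L}(D^{\ast}\setminus D)$-orthogonal best approximations \emph{from $V_{a}(D^{\ast})$} of elements of $Q_m$; this space has dimension at most $m$, the triangle inequality costs only a factor $2$ (since the target $u$ itself lies in the subspace being projected onto), and the difference $u-w$ is then genuinely in $V_{a}(D^{\ast})$ so Caccioppoli applies. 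Two smaller points: your domain indexing is off by one (at step $j=0$ the triple $D_{n-1}\subset D_n\subset D_{n+1}$ does not exist, and after one step $u_{j+1}$ is harmonic only on the inner domain, whereas the next application of \cref{weak-approximation} needs harmonicity on the outermost domain of the next triple --- you need to budget the shells accordingly); and in the abstract setting the first $l_i$ eigenvalues are $+\infty$ (the kernel of $a_{\omega_i^{\ast}}$ on $V_a(\omega_i^{\ast})$), so the $n$-width identification and the threshold $k_i$ must account for $k>l_i$.
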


\subsection{Abstract two-level RAS methods}
Based on the abstract MS-GFEM, we can define the corresponding iterative method in a fully abstract way following \cref{iterativesection}. No additional assumptions or conditions are needed in this subsection. 

Let $\pi_{S}$ and $\pi_{i}$ ($1\leq i\leq M$) denote the $a$-orthogonal projections of $V_{0}(\Omega)$ onto the coarse space $S_{m}(\Omega)$ and the local spaces $V_{0}(\omega_i^{\ast})$, respectively. Moreover, for each $i=1,\cdots,M$, we define the operator $\widetilde{\chi}_{i}: V_{0}(\omega_i^*)\rightarrow V_{0}(\omega_i^*)$ by $\widetilde{\chi}_{i}(v) = {\chi}_{i}(v|_{\omega_i})$. Then, we can define the map $G:V_{0}(\Omega)\rightarrow V_{0}(\Omega)$ of the abstract MS-GFEM approximation as in \cref{MSGFEM-map} by
\begin{equation}\label{abstract_MSGFEM_map}
G(v) = \sum\limits_{i=1}^M \widetilde{\chi}_{i} \pi_i(v) + \pi_S \Big( v - \sum\limits_{i=1}^M \widetilde{\chi}_{i}\pi_i(v) \Big).
\end{equation}
With this map, the abstract analogue of the iterative MS-GFEM is defined identically as in \cref{iterativesection}. 
\begin{definition}[abstract iterative MS-GFEM]
Let $u$ be the solution of \cref{eq:abstract_problem}. For an initial guess $\richIterate^0 \in V_{0}(\Omega)$, let the sequence $\{\richIterate^j\}_{j \in \mathbb{N}}$ be generated by 
    \begin{equation}\label{abstract_MSGFEM_iteration} 
        \richIterate^{j+1} = \richIterate^j + G\left( u - \richIterate^j \right),\quad j=0,1,\cdots.
    \end{equation}
\end{definition}
Similar to \cref{cor2}, we have the following convergence estimate. 
\begin{proposition} \label{rate_abstract_Richardson}
Let $\Lambda$ be the defined by \cref{abstract_Lamdba}, and let the sequence $\{\richIterate^j\}_{j \in \mathbb{N}}$ be generated by \cref{abstract_MSGFEM_iteration}. Then, for $j=1,2,\cdots$,
    $$
        \lVert \richIterate^{j+1} - u \rVert_a \leq \rate \lVert \richIterate^j - u \rVert_{a}.
    $$  
\end{proposition}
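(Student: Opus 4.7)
The plan is to reduce the proposition to the abstract analogue of \cref{cor1}, namely the claim that $\lVert v - G(v) \rVert_{a} \leq \Lambda \lVert v \rVert_{a}$ for all $v \in V_{0}(\Omega)$, and then plug this one-step estimate into the iteration \cref{abstract_MSGFEM_iteration}. The iteration step itself is cheap: from $\richIterate^{j+1} = \richIterate^{j} + G(u - \richIterate^{j})$ I get
\begin{equation*}
u - \richIterate^{j+1} = (u - \richIterate^{j}) - G(u - \richIterate^{j}),
\end{equation*}
so applying the one-step bound with $v := u - \richIterate^{j} \in V_{0}(\Omega)$ immediately yields $\lVert u - \richIterate^{j+1}\rVert_{a} \leq \Lambda \lVert u - \richIterate^{j}\rVert_{a}$.

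The real content is therefore to prove the one-step estimate. The key observation is that, by linearity, $G(v)$ coincides with the abstract MS-GFEM approximation \cref{abstract_GFEM_solution} of the auxiliary problem whose right-hand side is $F(\cdot) := a(v,\cdot)$, for which the exact solution is precisely $v$. To verify this, I would unwind the definitions: for this auxiliary $F$, the local problem in \cref{abstract_loc_par_sol} reads $a_{\omega_i^{\ast}}(\psi_i,w) = a(v,w)$ for all $w\in V_0(\omega_i^{\ast})$, which is exactly the defining relation of the $a$-orthogonal projection $\pi_i(v)$; hence $u_i^{p} = \pi_i(v)|_{\omega_i}$ and the glued global particular function equals $\sum_i \widetilde{\chi}_i \pi_i(v)$. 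Writing the GFEM approximation as $u^{p} + u^{S}$ with $u^{S}\in S_m(\Omega)$ satisfying $a(u^{S},w) = a(v-u^{p},w)$ for all $w\in S_m(\Omega)$ identifies $u^{S} = \pi_S(v-u^{p})$, and adding the two pieces reproduces exactly \cref{abstract_MSGFEM_map}. Thus $u^{G} = G(v)$.

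Once this identification is established, the one-step estimate is immediate from the abstract MS-GFEM error bound. Combining \cref{thm:2-1} with the local approximation result \cref{abstract-local-appro-proerty} applied to the auxiliary problem (whose solution is $v$) gives
\begin{equation*}
\lVert v - G(v) \rVert_{a} \;\leq\; \Bigl(\zeta \sum_{i=1}^{M} \lambda_{i,m_i+1}\, \lVert v\rVert_{a,\omega_i^{\ast}}^{2}\Bigr)^{1/2} \;\leq\; \Lambda \lVert v \rVert_{a},
\end{equation*}
where in the last step I use $\sum_i \lVert v\rVert_{a,\omega_i^{\ast}}^{2}\leq \zeta^{\ast} \lVert v\rVert_{a}^{2}$ from the coloring of the oversampling cover, together with the definition of $\Lambda$ in \cref{abstract_Lamdba}. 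Combining with the iteration step from the first paragraph completes the proof.

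I do not expect a real obstacle here: the proof is a direct transcription of the argument for \cref{cor2} to the abstract setting, and all needed machinery (the abstract approximation theorem, the local error bound, and the definition of the projections) is already in place. The only point requiring a little care is the bookkeeping that shows $G(v)$ really is the MS-GFEM approximation of the auxiliary problem with exact solution $v$; once that identification is verified, everything else is mechanical.
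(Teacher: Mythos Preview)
Your proposal is correct and follows exactly the approach implicit in the paper: the paper states this proposition without proof, indicating it is ``similar to \cref{cor2}'', whose proof in turn is just ``use the relation \cref{richardson-iteration-equation} and \cref{cor1}''. You have faithfully reproduced this chain in the abstract setting, supplying the bookkeeping that identifies $G(v)$ with the MS-GFEM approximation of the auxiliary problem with right-hand side $a(v,\cdot)$ and then invoking \cref{thm:2-1} together with \cref{abstract-local-appro-proerty} and the coloring bound for the oversampling cover---which is precisely how the paper derives the global error estimate leading to \cref{abstract_Lamdba}.
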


Next we follow \cref{iterativesection} to give the matrix formulation of the iterative method above. Given a basis for $V_{0}(\Omega)$, problem \cref{eq:abstract_problem} can be written as a linear system
\begin{equation}\label{abstract_linear_system}
    {\bf A} {\bf u} = {\bf f},
\end{equation}
where ${\bf A}\in \mathbb{R}^{n\times n}$ and ${\bf f}\in \mathbb{R}^{n}$ with $n:={\rm dim}(V_{0}(\Omega))$. Let ${\bf R}_S^T$, $\widetilde{\bf R}_i^T$, and $\widetilde{\boldsymbol{\chi}}_i$ ($1\leq i\leq M$) be the matrix representations of the embedding $S_{m}(\Omega) \rightarrow V_{0}(\Omega)$, the inclusions $V_{0}(\omega_i^*) \rightarrow V_{0}(\Omega)$, and the operators $\widetilde{\chi}_{i}$, respectively. Denoting by
$$
    \widetilde{\bf A}_i: = \widetilde{\bf R}_i {\bf A} \widetilde{\bf R}_i^T \quad \text{and} \quad {\bf A}_S: = {\bf R}_S {\bf A} {\bf R}_S^T,
$$
then the map $G$ \cref{abstract_MSGFEM_map} has the following matrix representation:
\begin{align*}
  {\bf G} 
    &= \left(  \sum_{i=1}^{M} \widetilde{\bf R}_i^T \widetilde{\boldsymbol{\chi}}_i \widetilde{\bf A}_i^{-1} \widetilde{\bf R}_i {\bf A} \right)
    + 
    \big({\bf R}_S^T {\bf A}_S^{-1} {\bf R}_S {\bf A} \big) 
    \left( {\bf I} - \sum_{i=1}^{M} \widetilde{\bf R}_i^T \widetilde{\boldsymbol{\chi}}_i \widetilde{\bf A}_i^{-1} \widetilde{\bf R}_i {\bf A}  \right),
\end{align*}
where ${\bf I}$ denotes the identity matrix. With this representation, the abstract MS-GFEM iteration \cref{abstract_MSGFEM_iteration} can be written in matrix form as
\begin{equation} \label{abstract-matrix-Richardson-iteration}
    \richIterateVector^{j+1} = \richIterateVector^{j} + {\bf G}({\bf u} - \richIterateVector^{j}).
\end{equation}
Equation \cref{abstract-matrix-Richardson-iteration} is a preconditioned Richardson iteration for the linear system \cref{abstract_linear_system}, with the preconditioner given by
\begin{equation} \label{abstract_preconditioner}
 {\bf B} = \left(  \sum_{i=1}^{M} \widetilde{\bf R}_i^T \widetilde{\boldsymbol{\chi}}_i \widetilde{\bf A}_i^{-1} \widetilde{\bf R}_i \right)
    + 
    \big({\bf R}_S^T {\bf A}_S^{-1} {\bf R}_S \big) 
    \left( {\bf I} - {\bf A}\sum_{i=1}^{M} \widetilde{\bf R}_i^T \widetilde{\boldsymbol{\chi}}_i \widetilde{\bf A}_i^{-1} \widetilde{\bf R}_i\right).
\end{equation}

Finally, we consider solving the preconditioned system ${\bf BAu} = {\bf Bf}$ by GMRES. We assume that GMRES is applied with respect to the inner product $b(\cdot,\cdot)$ on $\mathbb{R}^{n}$ which satisfies that for all ${\bf v}\in \mathbb{R}^{n}$,
\begin{equation*}
 b_{1} \sqrt{b({\bf v}, {\bf v})}\leq \sqrt{{\bf v}^{T}{\bf A}{\bf v}} \leq b_{2} \sqrt{b({\bf v}, {\bf v})}
\end{equation*}
for some $b_{1},b_{2}>0$. Let $\{{\bf u}^{j} \}_{j\in\mathbb{N}} $ be the approximation sequence generated by GMRES starting from an initial vector ${\bf u}^{0}$. We have the following convergence result.
\begin{theorem}\label{abstract_gmresconvergence}
If $\Lambda<1$, after $j$ steps, the norm of the residual of the GMRES algorithm can be bounded by 
    $$
        \lVert {\bf BA}({\bf u} - {\bf u}^{j}) \rVert_b \leq \rate^j \left( \frac{1+\rate}{1-\rate}\frac{b_2}{b_1}  \right) \lVert {\bf BA}({\bf u} - {\bf u}^{0}) \rVert_b.
    $$
\end{theorem}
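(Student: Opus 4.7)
The plan is to reuse the proof of \cref{gmresconvergence} essentially line for line: every ingredient used there has a direct abstract counterpart available in this section, so no new estimate is required. First, I would observe that \cref{gmresbetter} is a purely algebraic statement whose proof only invokes the GMRES minimization property together with the standard induction showing $\richIterateVector^{j}\in {\bf u}^{0}+\mathcal{K}_{j}$; since the matrix iteration \cref{abstract-matrix-Richardson-iteration} has the same form as \cref{matrix-form-iteration}, this gives $\lVert {\bf BA}({\bf u}-{\bf u}^{j})\rVert_{b}\leq \lVert {\bf BA}({\bf u}-\richIterateVector^{j})\rVert_{b}$ with no changes.

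From here I would mimic the concrete proof: bound the right-hand side by $b_{1}^{-1}\lVert {\bf BA}\rVert_{a}\lVert {\bf u}-\richIterateVector^{j}\rVert_{a}$ via the norm equivalence assumed in the theorem, apply \cref{rate_abstract_Richardson} $j$ times to replace this by $\Lambda^{j}\lVert {\bf u}-\richIterateVector^{0}\rVert_{a}$, and then convert the initial error back into an initial residual using
\[
\lVert {\bf u}-{\bf u}^{0}\rVert_{a}\leq \lVert ({\bf BA})^{-1}\rVert_{a}\lVert {\bf BA}({\bf u}-{\bf u}^{0})\rVert_{a}\leq b_{2}\lVert ({\bf BA})^{-1}\rVert_{a}\lVert {\bf BA}({\bf u}-{\bf u}^{0})\rVert_{b}.
\]
This reduces everything to the abstract analogue of \cref{conditionnumber}, whose proof transfers verbatim: \cref{rate_abstract_Richardson} applied to the error vector yields $\lVert {\bf I}-{\bf BA}\rVert_{a}\leq \Lambda$ and hence $\lVert {\bf BA}\rVert_{a}\leq 1+\Lambda$, while the hypothesis $\Lambda<1$ ensures that the Neumann series for $({\bf BA})^{-1}$ converges in the $a$-operator norm, giving $\lVert ({\bf BA})^{-1}\rVert_{a}\leq (1-\Lambda)^{-1}$. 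Multiplying the three factors produces the constant $\Lambda^{j}\cdot\tfrac{1+\Lambda}{1-\Lambda}\cdot\tfrac{b_{2}}{b_{1}}$ claimed in the theorem.

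I do not foresee any real obstacle; the argument is a transcription of the concrete case into the abstract setting of \cref{abstractFramework}. The only item worth double-checking is that the symmetry and positive definiteness of $a(\cdot,\cdot)$ on $V_{0}(\Omega)$, together with the chosen basis, make ${\bf A}\in\mathbb{R}^{n\times n}$ SPD, so that the $a$-norm on $\mathbb{R}^{n}$ and its induced operator norm are well defined and all the manipulations above make sense exactly as in \cref{preconditionersection}.
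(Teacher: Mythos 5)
Your proposal is correct and matches the paper exactly: the paper itself proves \cref{abstract_gmresconvergence} by stating that the argument of \cref{gmresconvergence} carries over verbatim, which is precisely the transcription you carry out (abstract analogues of \cref{gmresbetter}, \cref{conditionnumber}, and \cref{rate_abstract_Richardson}, combined with the assumed norm equivalence). No gaps.
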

The proof is exactly the same as that of \cref{gmresconvergence}.

\subsection{Applications}
We have established the two-level RAS methods with convergence analysis based on the abstract theory of MS-GFEM. Our theory formally relies only on the general assumption -- \cref{abstract_assumption}, which can be verified easily for various elliptic problems. The core of the theory, the exponential convergence property of local approximations, however, requires two nontrivial conditions -- the Caccioppoli-type inequality and the weak approximation property. In this subsection, we present two applications of the abstract theory, with all the assumptions or conditions verified, to demonstrate its usefulness. We should note that the application of our theory is not limited to these two examples. Indeed, it can be proved that the ${\bm H}({\rm curl})$ elliptic problems with N\'{e}d\'{e}lec elements based discretizations also fit into this theory (the two fundamental conditions in the continuous setting were verified in \cite{ma2023unified}, and their discrete counterparts will be verified in a forthcoming paper). Moreover, with a slight modification of the two conditions, this theory also applies to higher-order problems, e.g., biharmonic type problems. We omit these extensions to avoid overloading the paper. The interested reader is referred to \cite{ma2023unified} for details.

\begin{example}[Diffusion Equation] \label{diffusionExample}
While the diffusion equation was used as the model problem before, we reconsider it here within the unified theoretical framework established in \cite{ma2023unified}. Sharper convergence rates will be provided using the newly established theoretical results in \cite{ma2023unified}.

Let $V_{0}(\Omega)\subset H_{0}^{1}(\Omega)$ be a finite element space that consists of continuous piecewise polynomials on a shape-regular triangulation. For the bilinear form we set
    $$
        a_{D}(v,w) =  \int_{D}(A\nabla u)\cdot\nabla v \,dx, \quad D\subset \Omega,
    $$
    where the coefficient $A$ satisfies the same conditions as in \cref{continuousmodelproblem}. Clearly, \cref{abstract_assumption} is trivially satisfied with 
    \begin{equation*}
    V(D) = \{v|_{D}: v\in V_{0}(\Omega) \},\quad {\rm and} \;\;   V_{0}(D) = \{v|_{D}: v\in V_{0}(\Omega),\;\;  {\rm supp}\,(v)\subset \overline{D} \}.
    \end{equation*}
The Caccioppoli-type inequality and the weak approximation property with $\alpha = 1/d$ were verified in \cite{ma2023unified}. Based on the abstract theory, the convergence rate of the local approximations is $O(\exp(-ck^{1/d}))$, which is sharper than the one given by \cref{expcont}.
\end{example}


\begin{example}[Linear Elasticity] \label{linearElasticityExample}
Let $V_{0}(\Omega)\subset (H_{0}^{1}(\Omega))^{d}$ be a finite element space of vector fields that consists of continuous piecewise polynomials on a shape-regular triangulation. The associated bilinear form is given by 
   $$
     a_{D}({\bm v},{\bm w}) = \int_{D} \sum_{i,j}\Big(\sum_{k,l}C_{ijkl}\epsilon_{kl}({\bm v})\Big)\epsilon_{ij}({\bm w}) \,dx, \quad D\subset \Omega,
   $$
where the infinitesimal strain tensor ${\bm \epsilon} = (\epsilon_{ij})$ is given by the symmetric part of the deformation gradient, i.e.
   $$
     \epsilon_{ij}({\bm v})=\frac{1}{2}\left( \frac{\partial v_i}{\partial x_j} + \frac{\partial v_j}{\partial x_i}\right),
   $$
and ${\bm C} = (C_{ijkl})$ is the fourth-order symmetric elasticity tensor that satisfies the standard boundedness and coercivity conditions. 

As in the previous example, \cref{abstract_assumption} is also trivially satisfied in this case.  The Caccioppoli-type inequality and the weak approximation property with $\alpha = 1/d$ were also proved in \cite{ma2023unified}. Similarly as above, the abstract theory shows that the convergence rate of the local approximations in this case is $O(\exp(-ck^{1/d}))$.

\end{example}

\section{Numerical experiments}\label{experimentSection}
In this section, we present numerical examples to verify our theoretical results and demonstrate practical applicability of the proposed method. The first example is a two-dimensional heterogeneous diffusion problem with a high-contrast coefficient. We implement this example in Matlab, with the purpose of verifying the predicted convergence performance, getting a general idea of the costs, and comparing the performance of related methods. The second example is a three-dimensional elasticity problem in composite aero-structures, implemented in the Distributed and Unified Numerics Environment (DUNE) software package \cite{bastian2021dune}. The purpose of this example is to demonstrate the applicability of the MS-GFEM preconditioner to real-world problems in a high performance computing environment.

\subsection{Numerical solution of local eigenproblems}
We start by presenting a solution technique for the local eigenproblems in MS-GFEM. For simplicity, we focus on the model problem in \cref{modelproblem}, whereas extending the technique to general elliptic problems is straightforward. Following \cite{ma2022error}, by introducing a Lagrange multiplier to relax the $a$-harmonic condition in $V_{h,A}(\omega)$, we can rewrite the eigenproblem \cref{eigenproblemdiscrete} in mixed formulation: Find $\mu\in\mathbb{R}\cup\{+\infty\}$, $\phi\in V_{h}(\omega^{\ast})$, and $p\in V_{h,0}(\omega^{\ast})$ such that
\begin{equation}\label{eq:saddle-point-eig}
\begin{aligned}
a_{\omega^{\ast}}(\phi,v) + a_{\omega^{\ast}}(v,p) =& \;\mu\, a_{\omega}(\chi_{h}(\phi), \chi_{h}(v))\quad\, \forall v\in V_{h}(\omega^{\ast}),\\
a_{\omega^{\ast}}(\phi,\xi) =&\;0\quad\qquad\quad \qquad\qquad\;\; \;\forall \xi\in V_{h,0}(\omega^{\ast}).
\end{aligned}
\end{equation}
Here we omit the subdomain index and write $\mu = \lambda^{-1}$ for simplicity. Note that the eigenvectors of the mixed problem \cref{eq:saddle-point-eig} with finite eigenvalues ($\mu<\infty$) correspond to the eigenvectors of problem \cref{eigenproblemdiscrete}. Therefore, the mixed problem \cref{eq:saddle-point-eig} is equivalent to the original problem \cref{eigenproblemdiscrete} for building the local approximation space. Let $\big\{\varphi_{1},..,\varphi_{n_1}\big\}$ be a basis for $V_{h,0}(\omega^{\ast})$ and $\big\{\varphi_{1},\ldots,\varphi_{n_1}\big\}\cup \big\{\varphi_{n_1+1},\ldots,\varphi_{n_1+n_2}\big\}$ a basis for $V_{h}(\omega^{\ast})$. We can formulate problem \cref{eq:saddle-point-eig} as the following matrix eigenvalue problem: Find $\mu\in\mathbb{R}\cup\{+\infty\}$, ${\bm \phi} = ({\bm \phi}_{1}, {\bm \phi}_{2})\in\mathbb{R}^{n_{1}+n_{2}}$, and ${\bm p}\in \mathbb{R}^{n_1}$ such that
\begin{equation}\label{eq:mixed_eig_matrix}
{\left( \begin{array}{ccc}
{\bf A}_{11} & {\bf A}_{12} & {\bf A}_{11}\\
{\bf A}_{21} & {\bf A}_{22} & {\bf A}_{21}\\
{\bf A}_{11} & {\bf A}_{12} & {\bf 0}
\end{array} 
\right )}
{\left( \begin{array}{c}
{\bm \phi}_{1} \\
{\bm \phi}_{2} \\
{\bm p}
\end{array} 
\right )} = \mu
{\left( \begin{array}{ccc}
{\bf P}_{11} & {\bf P}_{12} & {\bf 0}\\
{\bf P}_{21} & {\bf P}_{22} & {\bf 0}\\
{\bf 0} & {\bf 0} & {\bf 0}
\end{array} 
\right )}
{\left( \begin{array}{c}
{\bm \phi}_{1} \\
{\bm \phi}_{2} \\
{\bm p}
\end{array} 
\right )},
\end{equation}
where ${\bf A}_{ij}=\Big(a_{\omega^{\ast}}(\varphi_{k}, \varphi_{l})\Big)_{k\in I_{i}, l\in I_{j}}$ and ${\bf P}_{ij}=\Big(a_{\omega}\big(\chi_{h}(\varphi_{k}),\,  \chi_{h}(\varphi_{l})\big)\Big)_{k\in I_{i}, l\in I_{j}}$. Here, $I_{1} = \{1,\ldots,n_{1}\}$ and $I_{2} = \{n_{1}+1,\ldots,n_{1}+n_{2}\}$ are index sets. By exploiting the special structure of the matrix pencil above, an efficient algorithm was developed in \cite{ma2022error} for solving the augmented eigenproblem \cref{eq:mixed_eig_matrix} based on simple block-elimination. See \cite{ma2023unified} for its extension to general elliptic problems. Whereas this algorithm is much cheaper in terms of memory and computational time, it is slightly less accurate than the direct solution of the eigenproblem \cref{eq:mixed_eig_matrix} (but can still attain an accuracy of order $10^{-5}$). Since in this paper, we are primarily concerned with the convergence performance of the proposed method predicted by our theory, we solve the eigenproblem \cref{eq:mixed_eig_matrix} directly without introducing additional errors.


In addition to the technique of Lagrange multiplier, there exist other methods for solving the local eigenproblems based on approximating the $a$-harmonic subspaces. These approximation techniques include random sampling \cite{chen2020random,buhr2018randomized} and special choices of boundary data to build the $a$-harmonic bases \cite{babuvska2020multiscale,lipton2022angles}. We will incorporate these techniques into our iterative method and evaluate their performance in a future work.   

\subsection{Example 1: heterogeneous diffusion equation}
In this example, we solve the diffusion equation with a heterogeneous coefficient (see \cref{skyscraper} left) on the domain $\Omega = [0,1] \times [0,1]$. The following mixed boundary conditions are used:
\begin{equation*}
\left\{
\begin{array}{ll}
    A\nabla u\cdot {\bf n}  = 1\quad \;\;\,\text{on} \;\;(0,1)\times \{1 \},  \quad u = 10 \qquad \,\text{on} \;\;\{0\}\times [0,1], \\
    A\nabla u\cdot {\bf n}  = -1\quad \text{on} \;\;(0,1)\times \{0 \},  \quad u = -10 \quad \;\text{on} \;\;\{1\}\times [0,1].
\end{array}
\right.
\end{equation*}
The source term $f$ is given by
$$
    f(x):= 1000 \exp((-(x_1-0.15)(x_1-0.15)-10(x_2-0.55)(x_2-0.55))).
$$
The FE mesh is built on a uniform Cartesian grid with $h=1/700$ and the lowest order quadrilateral element is used. We first divide the computational domain into $7 \times 7$ rectangular non-overlapping subdomains, and then add two layers of elements to each subdomain to create overlapping subdomains $\{\omega_i\}$. Each subdomain $\omega_i$ is extended further by adding a few layers of elements to generate the corresponding oversampling domain $\omega_i^{\ast}$. We refer to the number of these additional layers of elements as oversampling layers or 'Ovsp' for short in the following. For the coarse space we use a certain number of eigenfunctions on each subdomain, denoted by '\#Eig'. The experiment is run on a desktop PC with an AMD Ryzen 5 2600 processor. For the eigensolves we use Matlab's eigs, a wrapper for Arpack. Local solves and coarse solves are implemented with the built-in backslash, a direct solver based on a multifrontal method. We stop the iteration once a residual reduction of $10^{-10}$ is attained.


\begin{figure} \label{skyscraper}
\begin{center}
  \includegraphics[width=.40\textwidth]{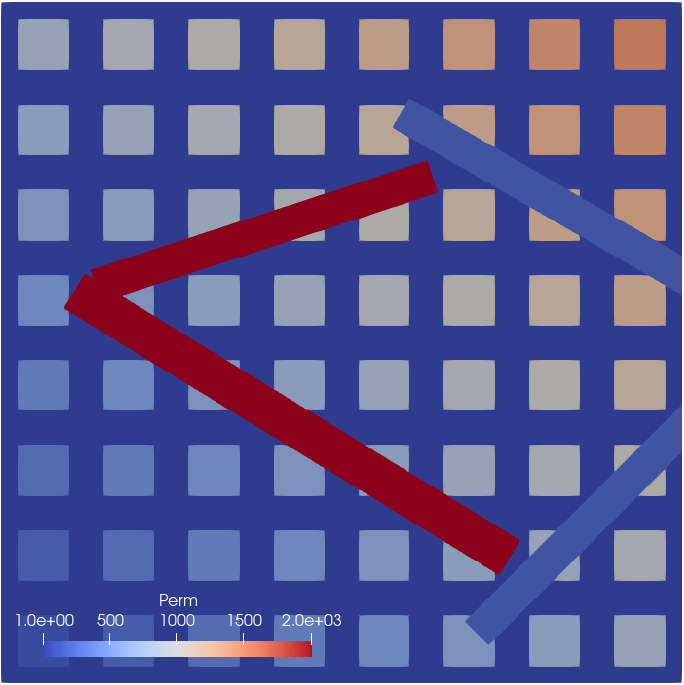}
  \hspace{3ex}
    \includegraphics[width=.40\textwidth]{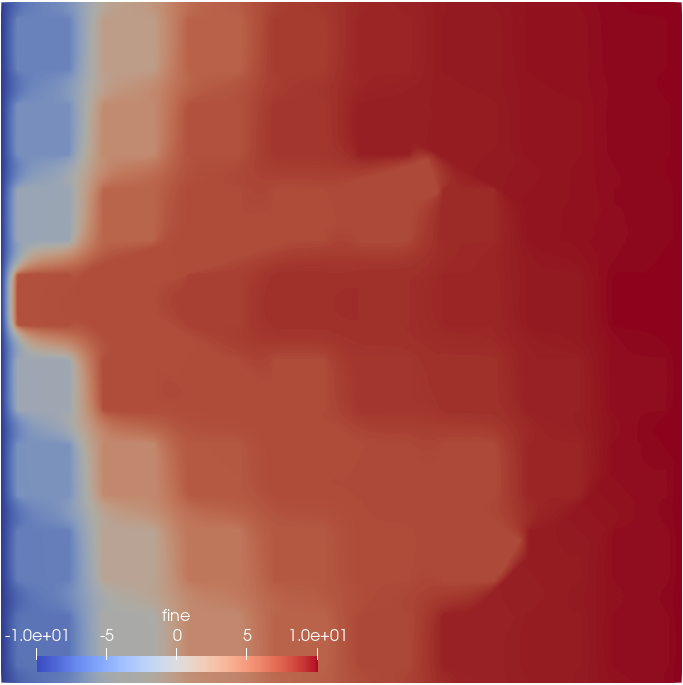}
    \caption{Diffusion equation example. Left: 'skyscraper' coefficient; right: solution.}  
\end{center}
\end{figure}

\begin{figure} \label{heatmaps}
    \includegraphics[width=0.49\textwidth]{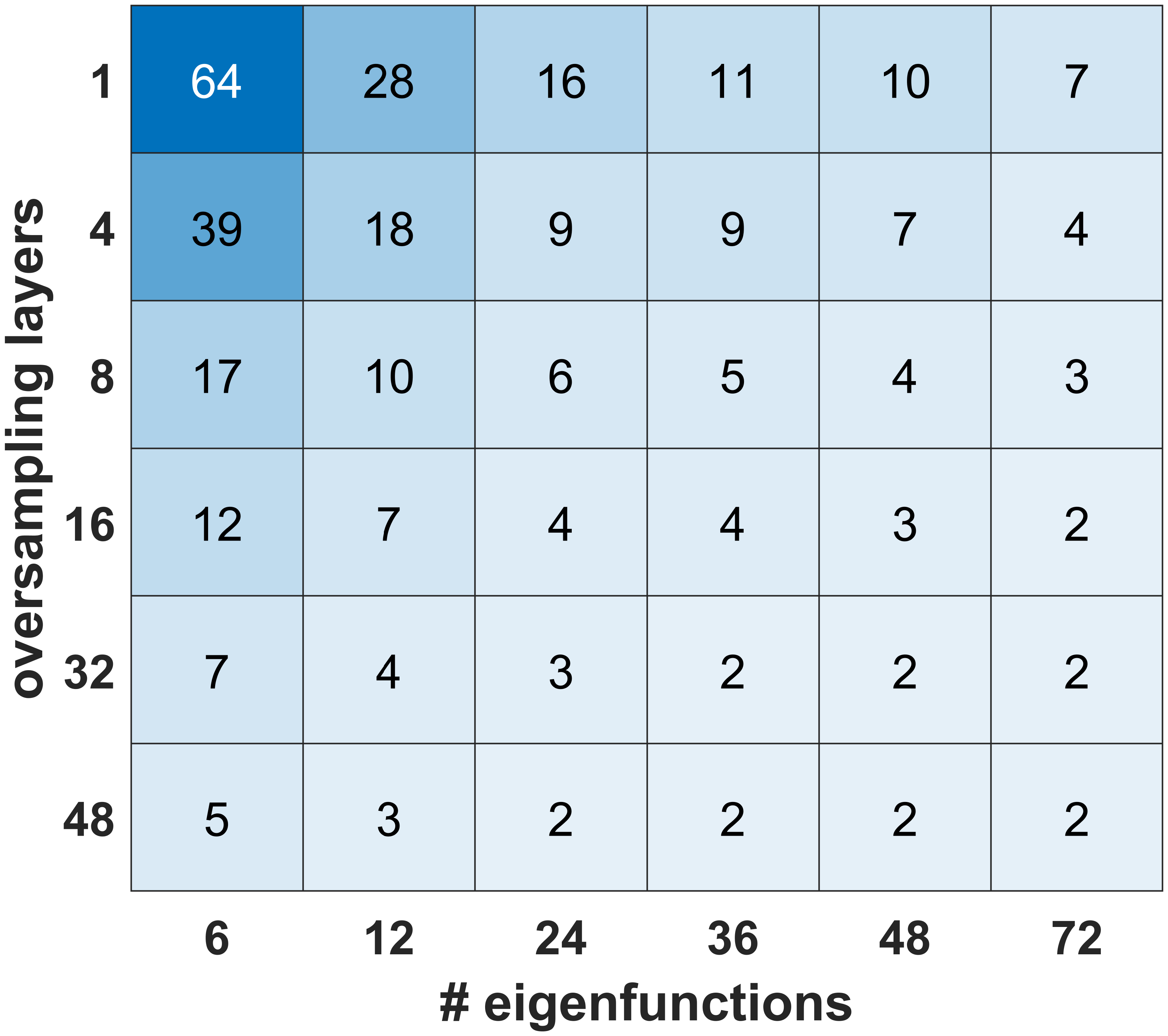}\hfill
    \includegraphics[width=0.49\textwidth]{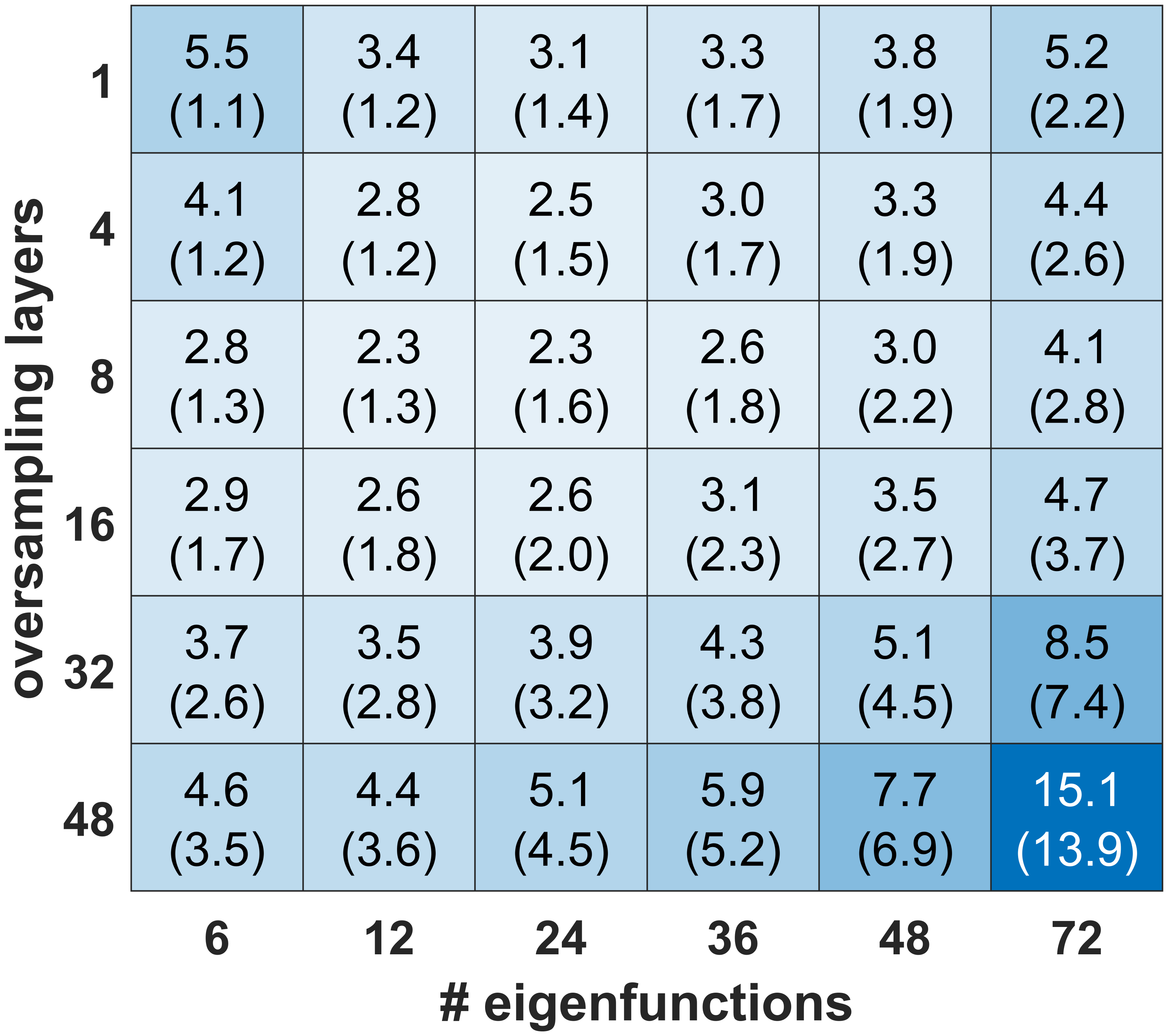}
    \caption{Results for the diffusion example solved with MS-GFEM as the iterative method. We report the iteration count (left) and total computational time in seconds (right) for different numbers of oversampling layers and local eigenfunctions used per subdomain. The numbers in brackets on the right show the time spent on the local eigensolves.}
\end{figure}


\begin{figure} \label{convergencePlots}
    \includegraphics[width=0.49\textwidth]{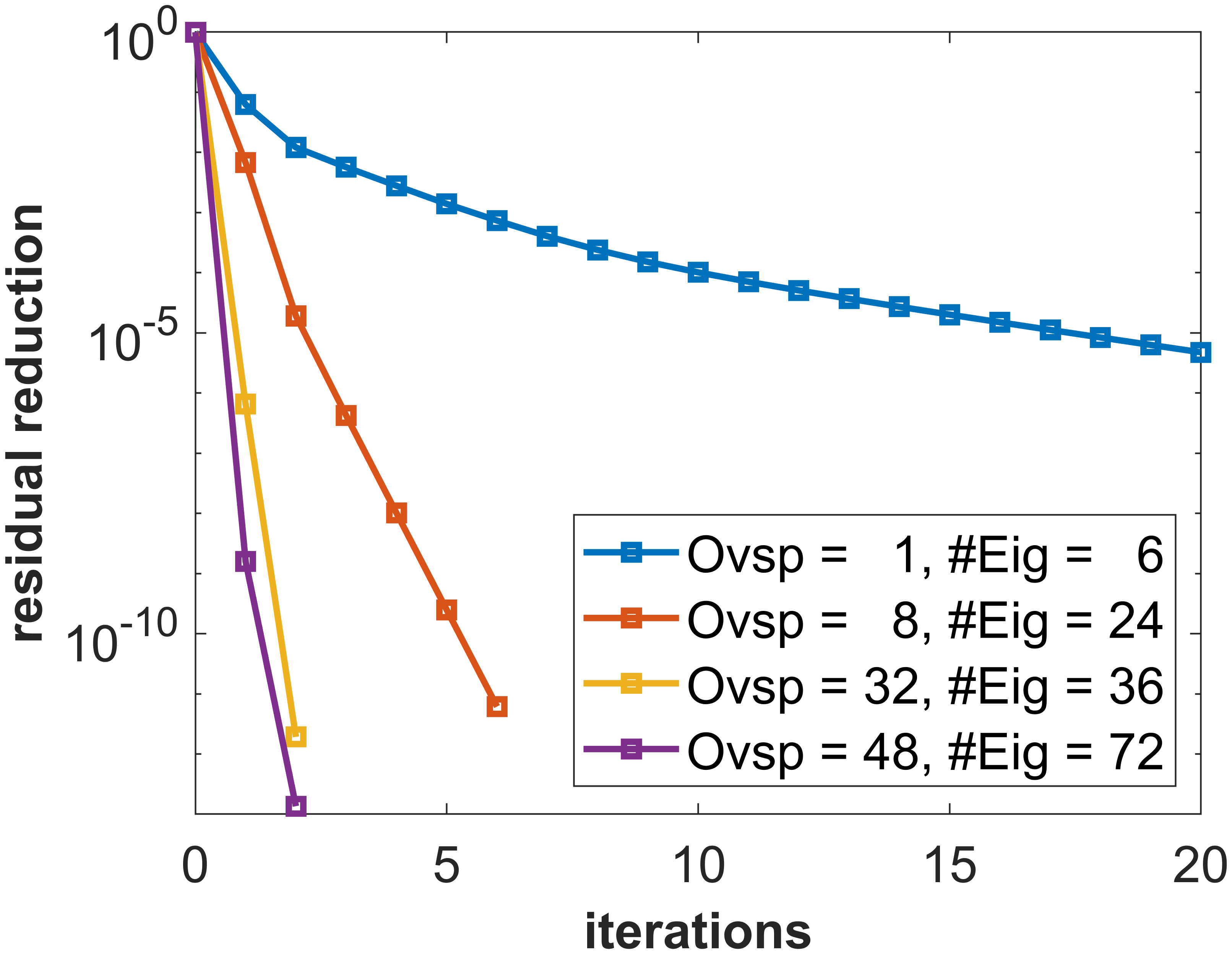}\hfill
    \includegraphics[width=0.49\textwidth]{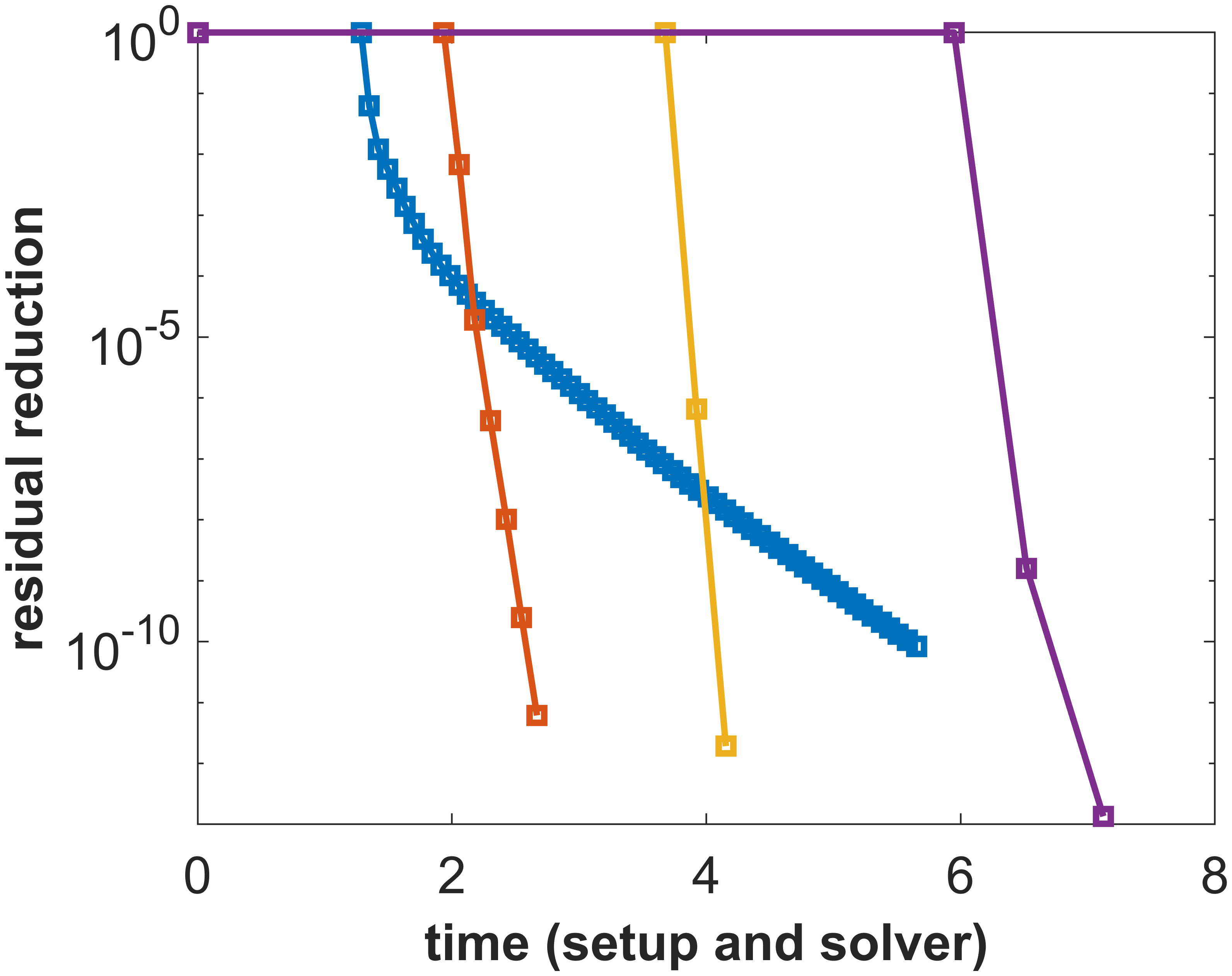}
    \caption{Results for the diffusion example solved with MS-GFEM as the iterative method. The plots show the residual reduction over iterations (left) and over time (right) for selected parameters used in \Cref{heatmaps}. }
\end{figure}


\begin{figure} \label{fig:compareSolvers}
    \includegraphics[width=.48\textwidth]{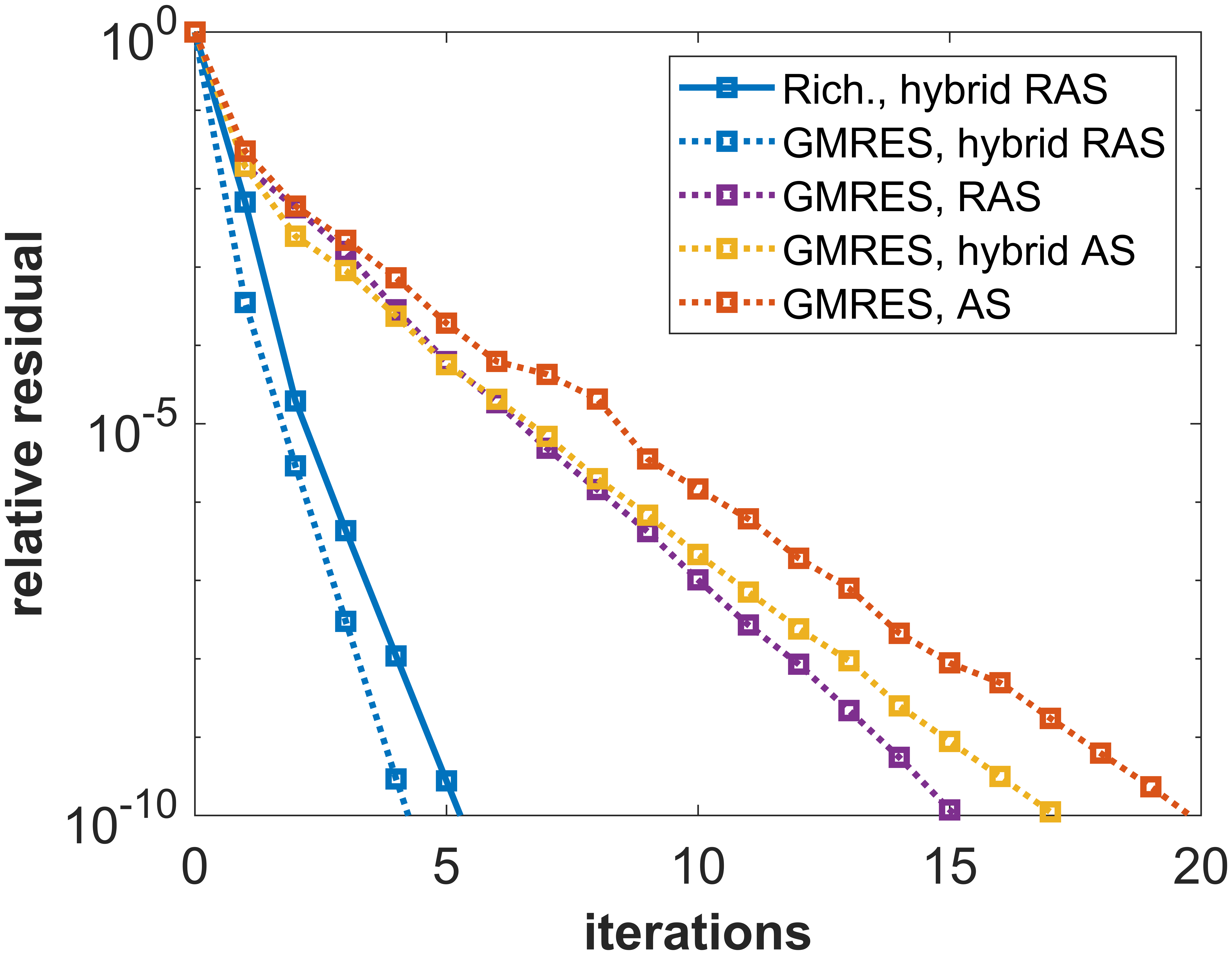}\hfill
    \includegraphics[width=.48\textwidth]{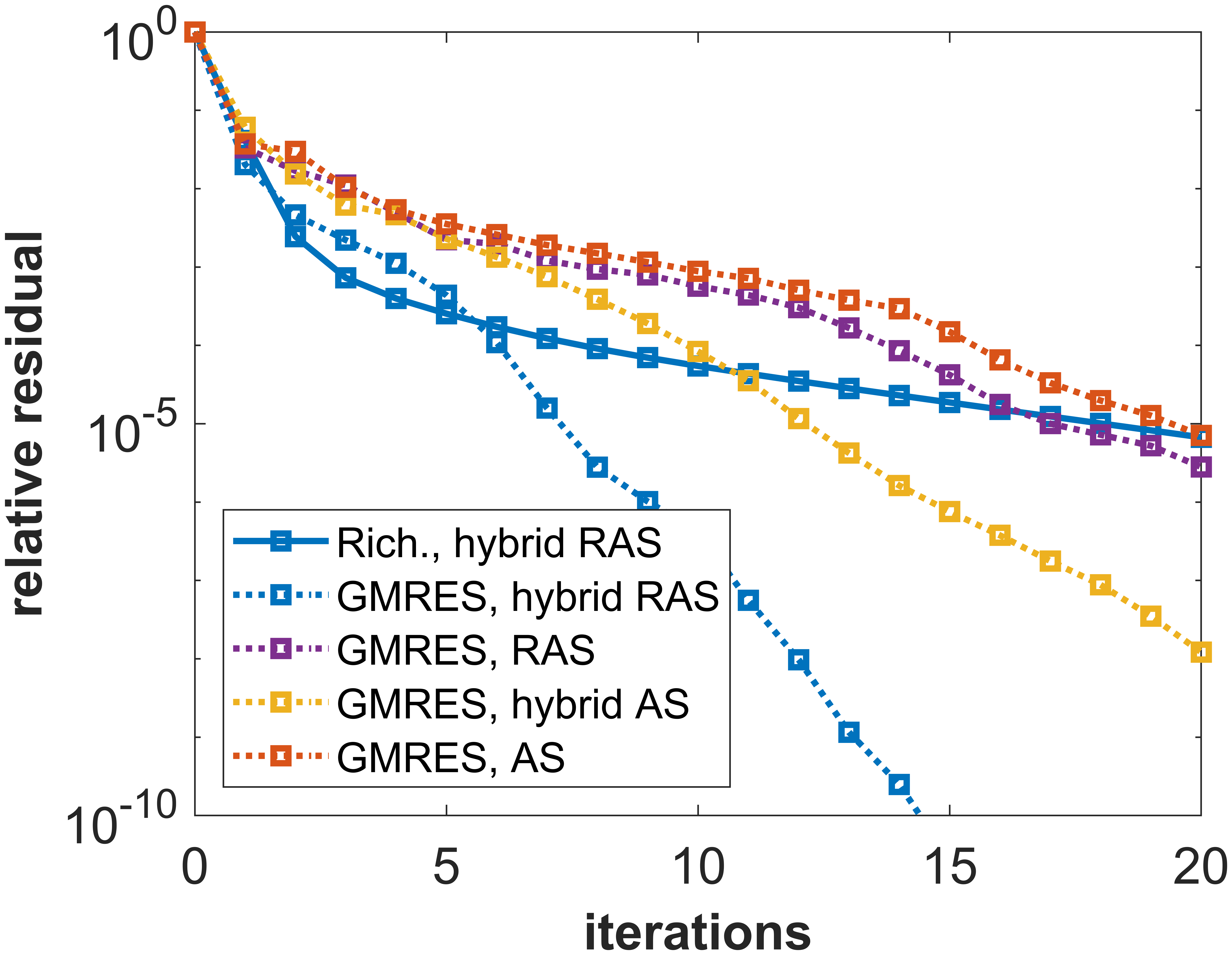}
    \caption{Results for the diffusion example solved with different preconditioner schemes. The plots show the convergence history of these algorithms with the MS-GFEM coarse space (left) and the GenEO coarse space (right), all with a fixed choice of oversampling and local space sizes (Ovsp=8, \#Eig=24). }
\end{figure}


\begin{figure} \label{fig:convergenceRate}
    \includegraphics[width=.48\textwidth]{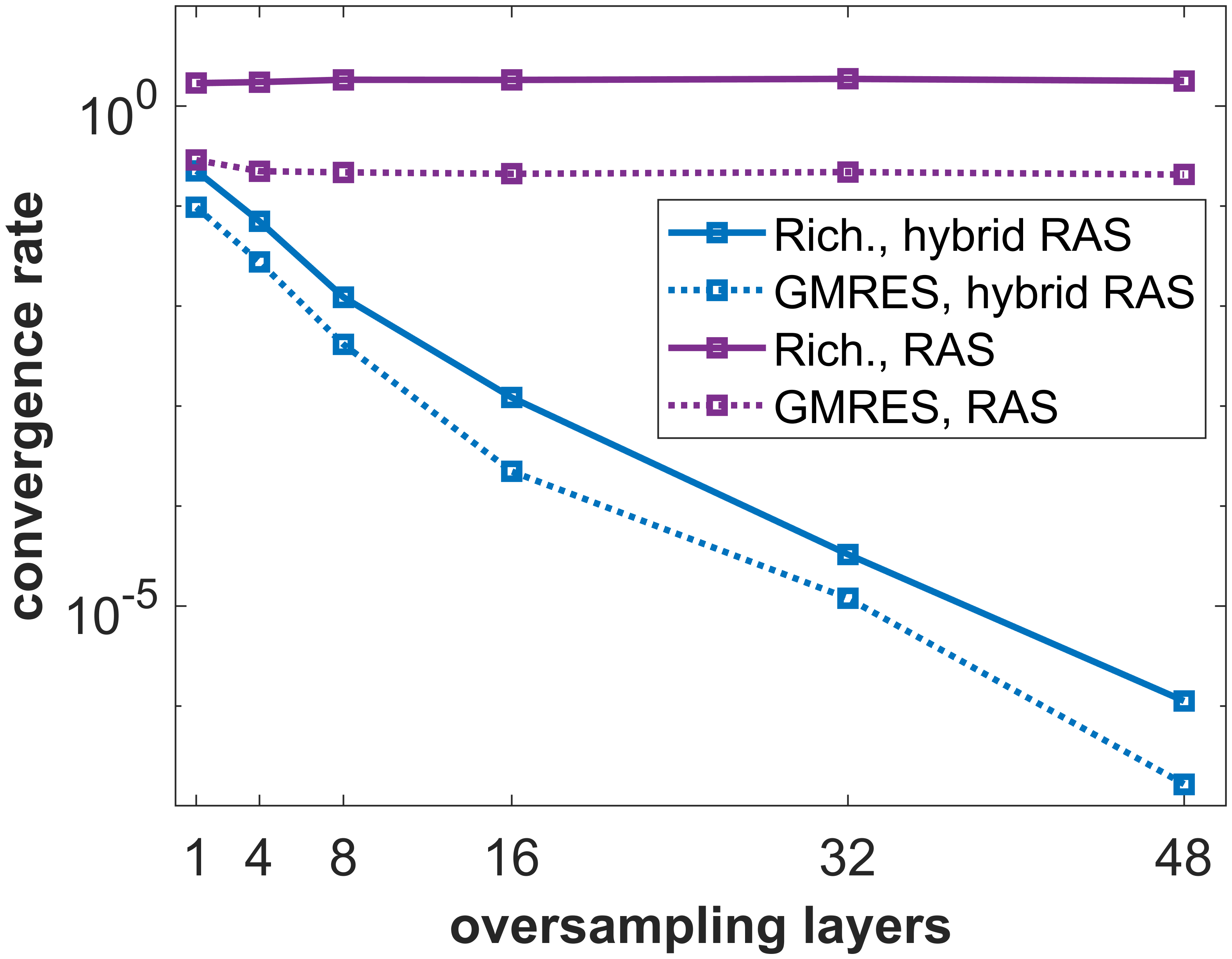}\hfill
    \includegraphics[width=.48\textwidth]{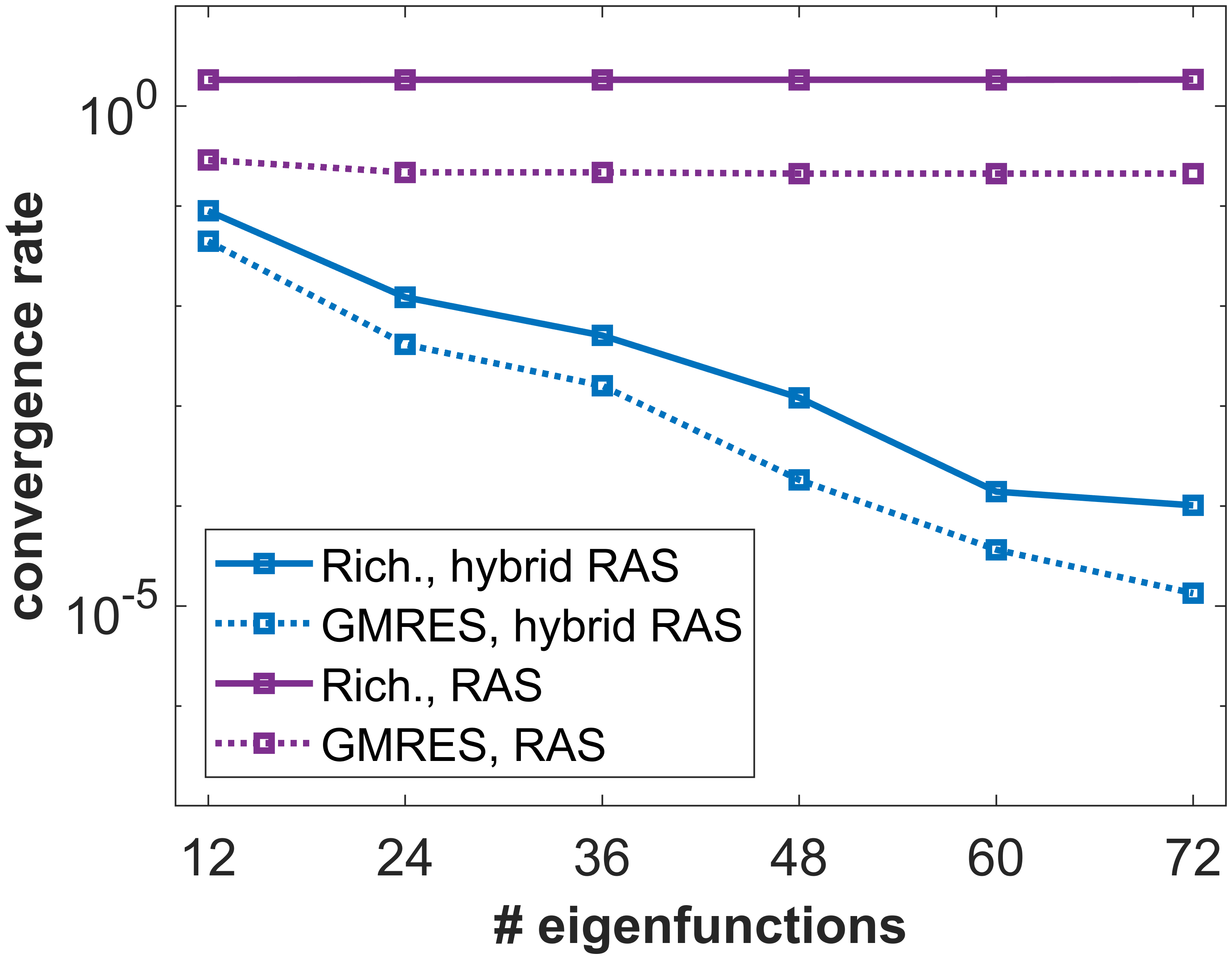}
    \\[\smallskipamount]
    \includegraphics[width=.48\textwidth]{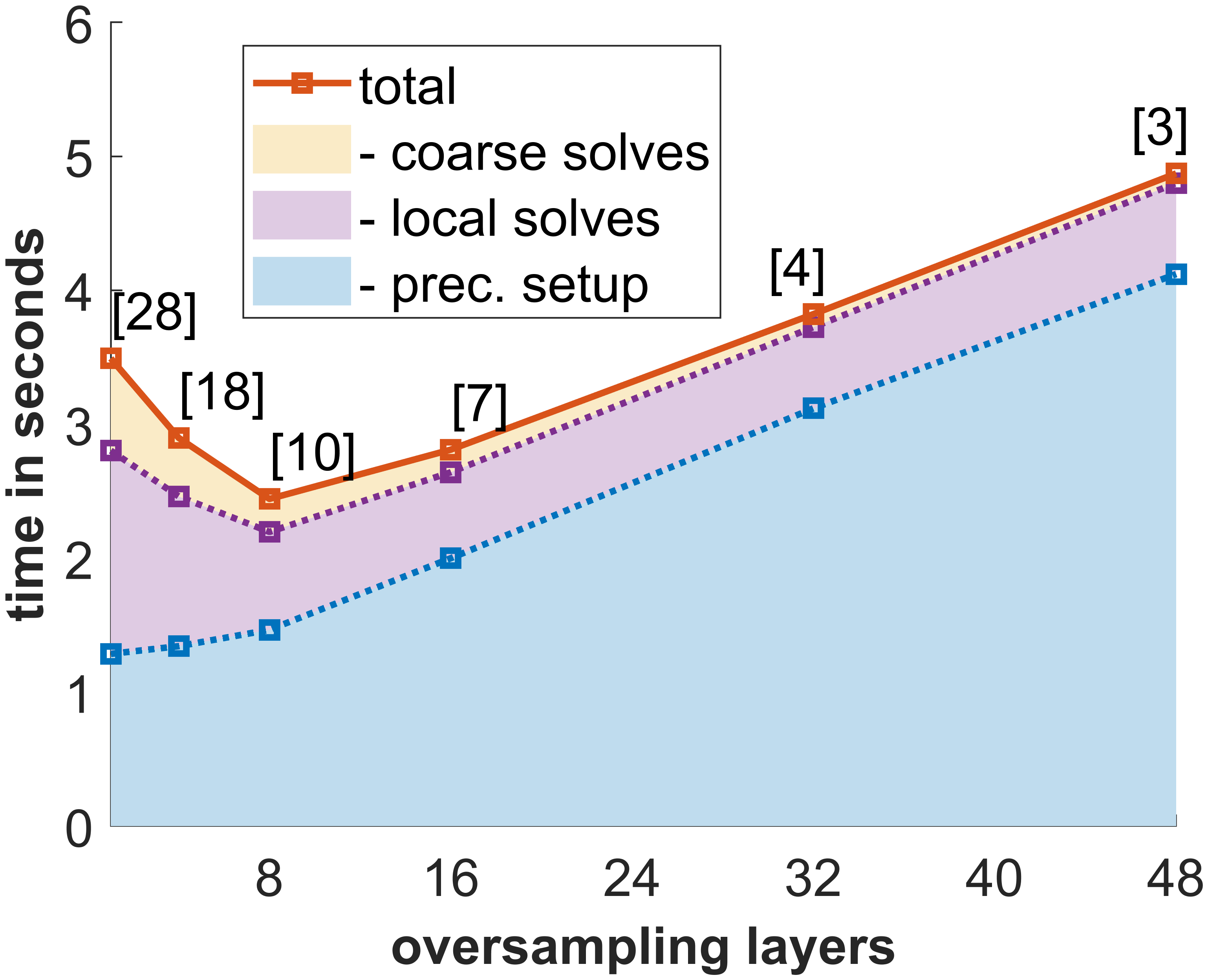}\hfill
    \includegraphics[width=.48\textwidth]{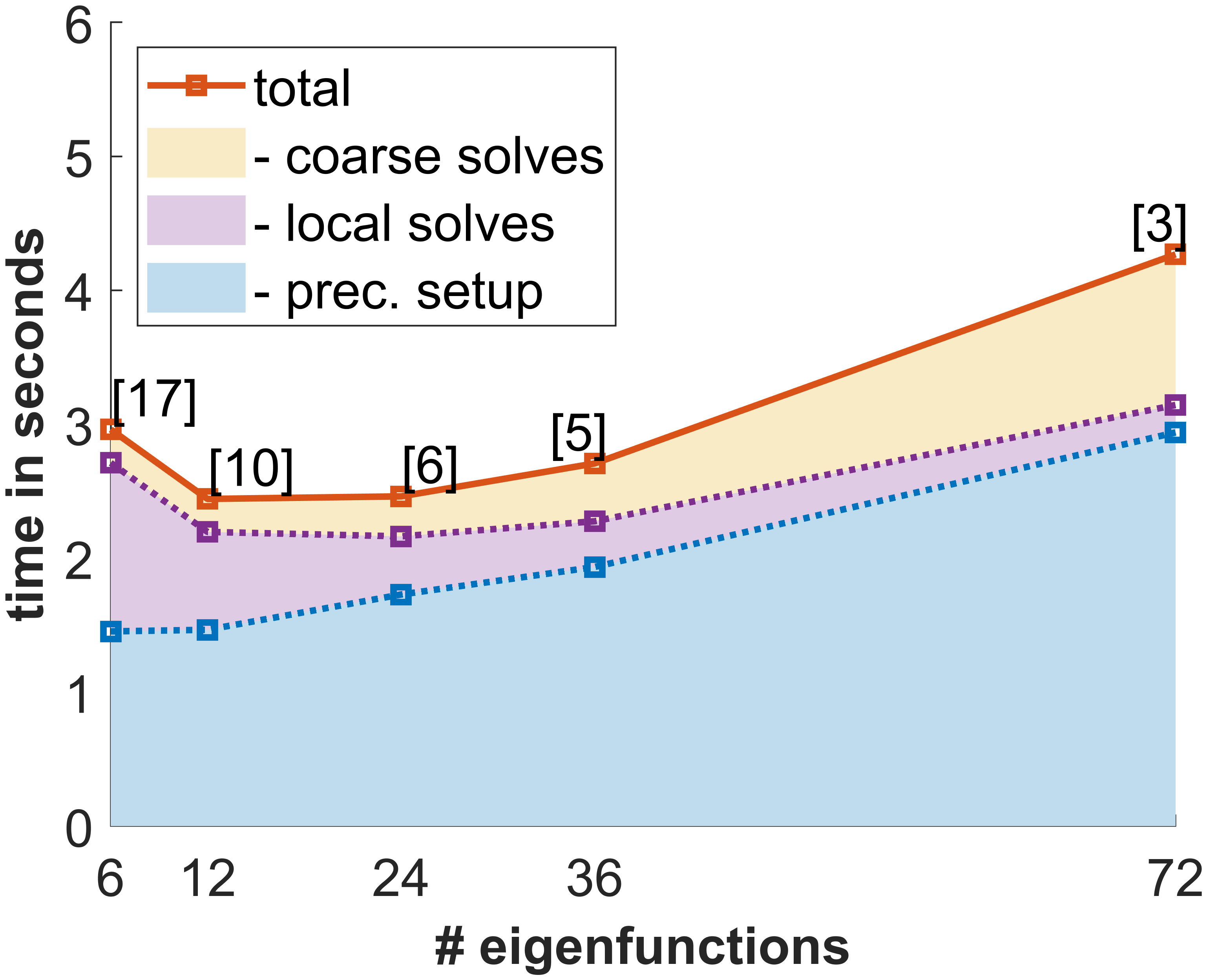}
    \caption{Numerical results for the diffusion example. The top plots show the convergence rates of different MS-GFEM based solvers for increasing oversampling layers (left, with \#Eig = 24 fixed) and numbers of local bases (right, with Ovsp=8 fixed). The bottom plots show the required computational time of the iterative MS-GFEM method as functions of the numbers of oversampling layers (left, with \#Eig = 12 fixed) and local bases (right, with Ovsp=8 fixed). }
\end{figure}

In \cref{heatmaps}, we display the performance of MS-GFEM within the Richardson iterative method for a wide choice of oversampling and local space sizes. It can be clearly seen that as the oversampling size and/or the dimension of the local bases increase, the number of iterations needed for the convergence is significantly reduced, as predicted by our theory. Indeed, we can achieve a convergence rate as high as $10^{-9}$ with a moderate amount of oversampling and local basis functions, whereas increasing the size of the local bases or the oversampling does not significantly improve on this, possibly due to rounding errors. However, the choice of parameters with the fastest convergence rate is not optimal in terms of computation time. This is because the iterative solver itself also becomes more expensive with larger oversampling (local solves) and a larger number of eigenfunctions (coarse solve). Indeed, as shown in \cref{heatmaps} (right), there is a sweet spot in the trade-off between oversampling layers, number of eigenfunctions, and number of iterations to achieve the desired residual reduction in minimal time. In \Cref{convergencePlots}, we further display the performance of several parameter settings in detail. Unsuprisingly the cheapest (most expensive) eigensolves give the slowest (fastest) residual reduction per iteration. The least (total) computational time is achieved here with Ovsp = 8 and \#Eig = 12. While in practice, it is hard to optimally choose these parameters a priori, it is notable that the least computational time is not achieved with the extremes. In particular, it may not be desirable to construct the perfect MS-GFEM coarse space that can solve the problem in one shot, even when solving the same problem repeatedly.

In \cref{fig:compareSolvers} (left), we compare the performance of different two-level Schwarz preconditioners based on the MS-GFEM coarse space with a fixed choice of oversampling and local space sizes. We test four preconditioner schemes -- RAS, AS, hybrid RAS, and hybrid AS, where 'hybrid' means that the coarse space is added multiplicatively. The results show that among the four schemes, the hybrid RAS preconditioner proposed in this paper yields the fastest convergence rate. Moreover, except for the hybrid RAS, the other three lead to divergent Richardson iterative methods (not shown in the plot). As expected, the hybrid RAS preconditioner with the MS-GFEM coarse space converges faster when accelerated with GMRES as opposed to a simple Richardson iteration. In \cref{fig:compareSolvers} (right), we show the performance of the above four preconditioner schemes with the GenEO coarse space. For a fair comparison, here we use the same amount of oversampling for the GenEO eigensolves and local solves, although the original GenEO method was defined without oversampling. We see that each MS-GFEM preconditioner outperforms the corresponding GenEO preconditioner in terms of convergence rate, which demonstrates the importance of defining the local eigenproblems on the harmonic subspaces. Moreover, the convergence rate of the hybrid RAS preconditioner is the fastest, as for the MS-GFEM coarse space.


While \cref{fig:compareSolvers} shows that the standard RAS preconditioner based on MS-GFEM does not perform as well as the hybrid one, it is still unclear that whether increasing the oversampling size or the number of local basis functions can significantly speed up its convergence as for the latter. In \cref{fig:convergenceRate} (top), we show that this is not the case -- its convergence rate remains basically unchanged with increasing oversampling or local space sizes. This observation shows that it is essential to add the MS-GFEM coarse space multiplicatively. In \cref{fig:convergenceRate} (bottom), we plot the required computational time of different parts of the MS-GFEM iterative method as functions of the numbers of oversampling layers (left) and eigenfunctions (right). We see that with increasing the oversampling size, the time of eigensolves increases significantly. Meanwhile, since fewer iterations are needed, the time of local solves remain roughly the same, and the time of the coarse solve is greatly reduced. On the other hand, when increasing the number of local bases, the time of eigensolves increases mildly, but the time of the coarse solve increases significantly, even though the iteration number drops. As in \cref{heatmaps} (right), we clearly see that there are sweet spots, where the sum of the time of eigensolves, local solves and coarse solve are minimised.

\subsection{Example 2: Linear elasticity for composite aero-structures}
In this example, we apply the MS-GFEM preconditioner to three-dimensional linear elasticity equations in composite aero-structures, to evaluate its performance for realistic applications. The structural component that we simulate is a C-shaped wing spar (C-spar) of length 500mm, with a joggle region in its center. The material is a laminated composite with 24 uni-directional layers (plies). Each layer has a thickness of 0.2mm and is made up of carbon fibers embedded into resin. We refer to \cite{benezech2022scalable} for a more detailed description of the model. For the FE discretization, we use piecewise linear elements on a hexahedral mesh with one element through thickness per layer within the DUNE framework \cite{bastian2021dune}. See \Cref{weakScaling} (left) for an illustration of the structure and the FE grid. We run the experiment on the HPC bwForCluster Helix, which offers compute nodes with 64 CPU cores (2x AMD Milan EPYC 7513) and 236GB RAM per node. The preconditioned system is solved using standard GMRES, and the local eigenproblems are solved by the Arpack solver built in DUNE. A (relative) residual reduction of $10^{-6}$ is used as the stopping criterion for the GMRES iteration.

\begin{figure} \label{weakScaling}
    \raisebox{10mm}{\includegraphics[width=0.5\textwidth]{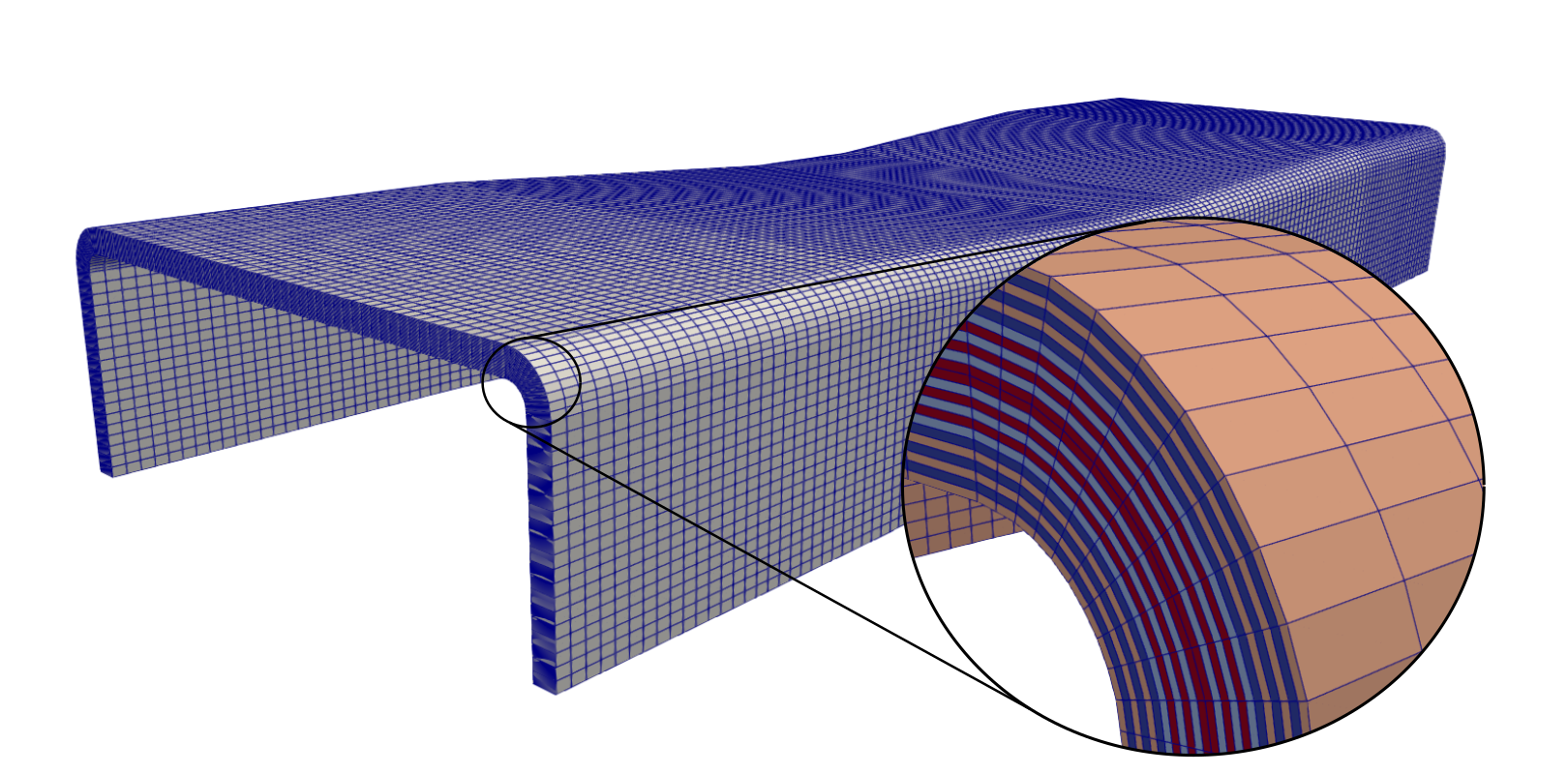}}\hfill
    \includegraphics[width=0.5\textwidth]{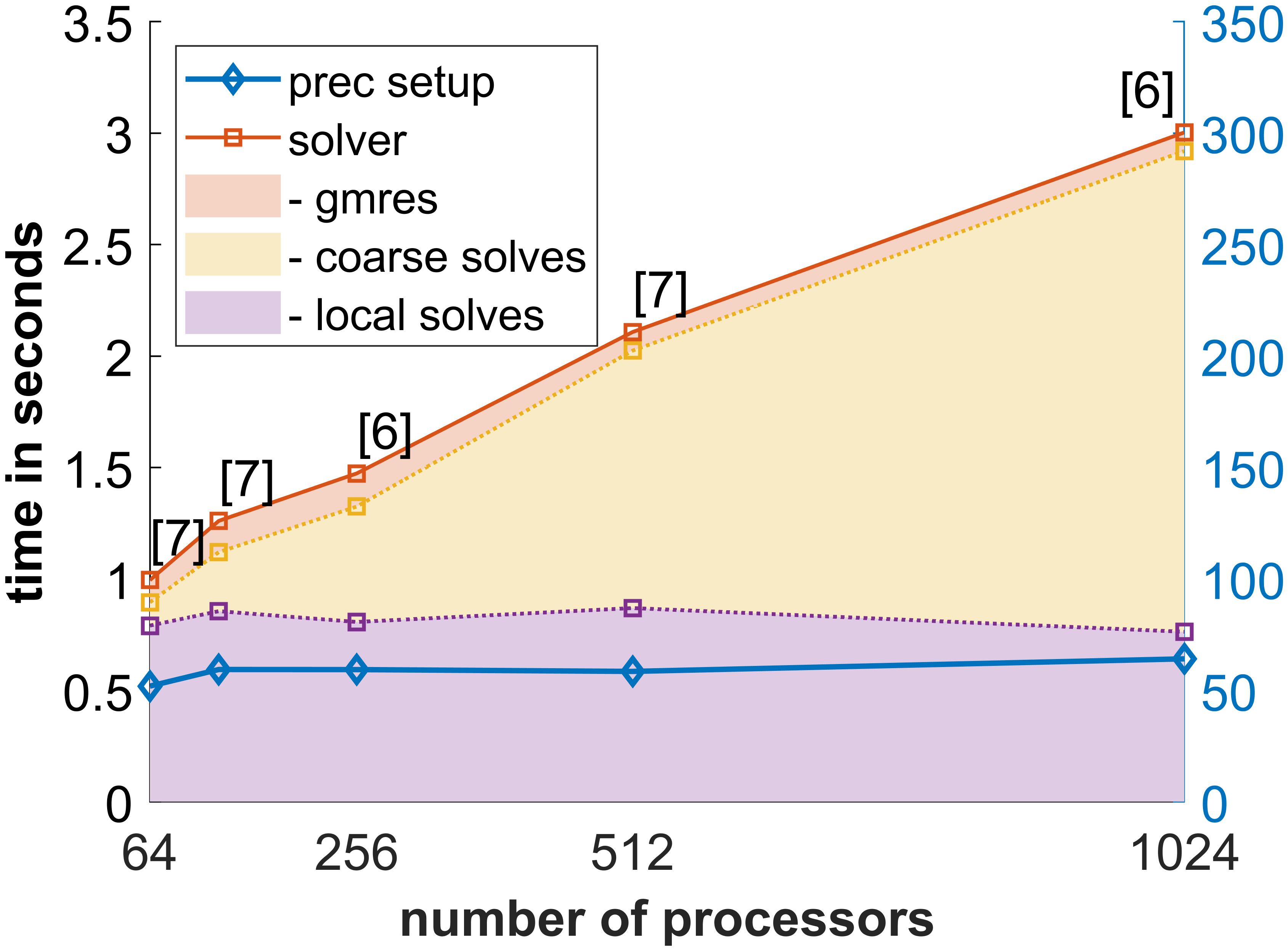}
    \caption{Composite C-spar experiment. Left: an illustration of the C-spar with the layers and the FE grid. Right: results of the weak scaling test for the five models described in \Cref{problemSize}, each with one oversampling layer and 30 eigenfunctions used per subdomain. The numbers in the bracket are the iteration counts. The time of the preconditioner setup ('prec setup') corresponds to the y-axis on the right side.}
\end{figure}

\begin{table}[h!]\label{problemSize}
    \centering
    \begin{tabular}{|r|r|r|}
        \hline
        Length in mm & DOFs & Cores \\
        \hline
        125 & 297,000 & 64 \\
        250 & 585,750 & 128 \\
        500 & 1,163,250 & 256 \\
        1000 & 2,318,250 & 512 \\
        2000 & 4,628,250 & 1024 \\
        \hline
    \end{tabular}
    \caption{Composite C-spar experiment. Details of the models used for the weak scaling test: length in mm, number of DOFs, and number of cores/subdomains used. If not otherwise specified, provided results below are for the model with $\text{length} = 500\text{mm}$ and $\text{cores} = 256$.}
\end{table}

We first perform a weak scaling test for different C-spar models with problem size that scales with the number of processors used. We use five C-spar models with lengths varying from 125mm to 2000mm, and fix the discretization of the mesostructure such that the total number of elements scales with the length. The ratio of the number of DOFs to the number of processors used for each model is kept constant; see \cref{problemSize}. The computational time for the five models is displayed in \cref{weakScaling} (right). We see that the time of the preconditioner setup dominates the overall computational time, and that it scales perfectly with the number of processors. It is because the cost of this part is dominated by the time spent on the local eigensolves that can be performed fully in parallel. Similarly, we observe perfect scaling for the local solves. On the other hand, the time spent on the coarse solves increases significantly for larger models, because the coarse problem is still solved on one processor with direct solvers and its size scales with the number of subdomains (processors).

\begin{figure} \label{heatmapsLinMSGFEM}
    \includegraphics[width=.49\textwidth]{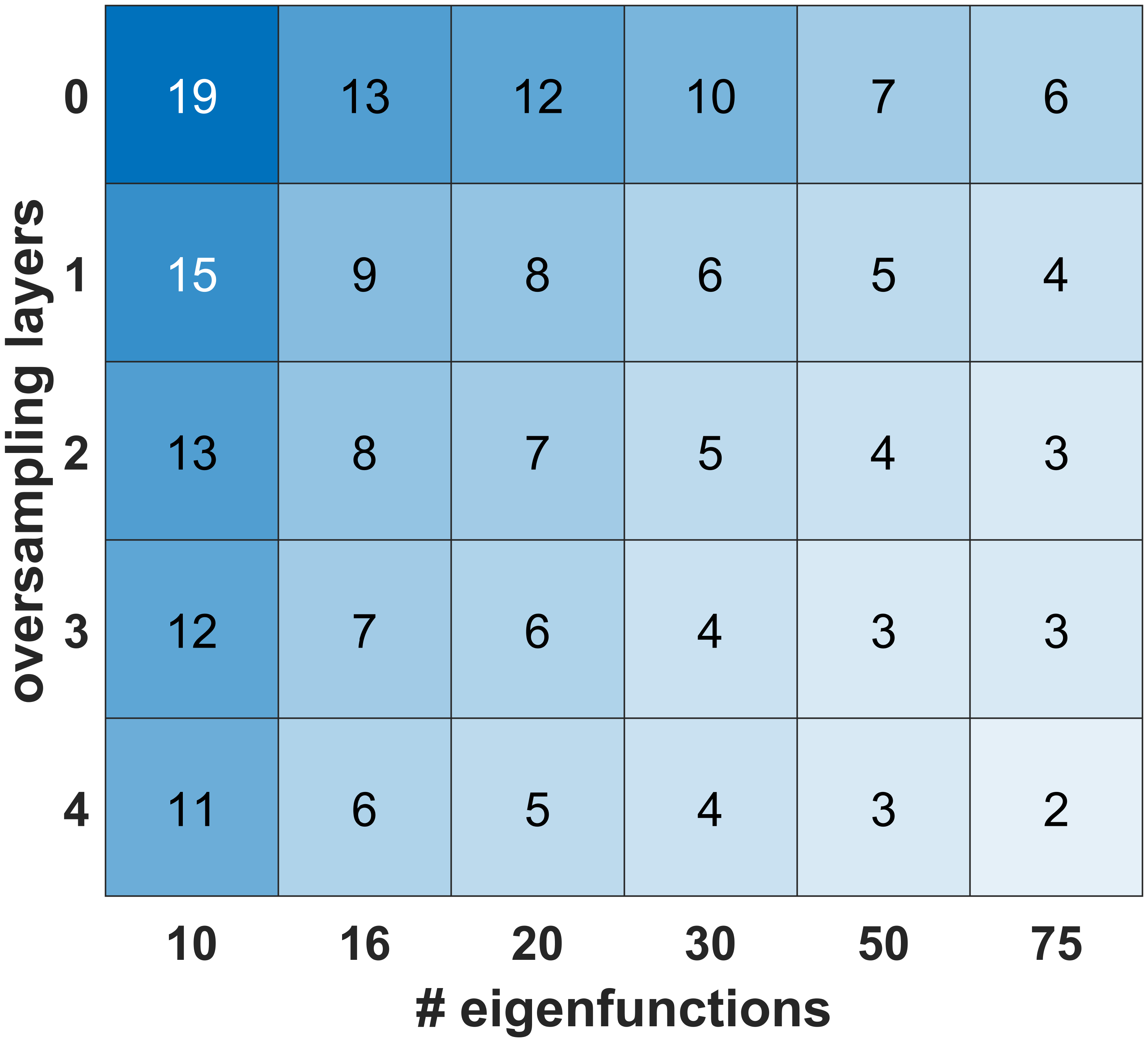} \hfill
     \includegraphics[width=.49\textwidth]{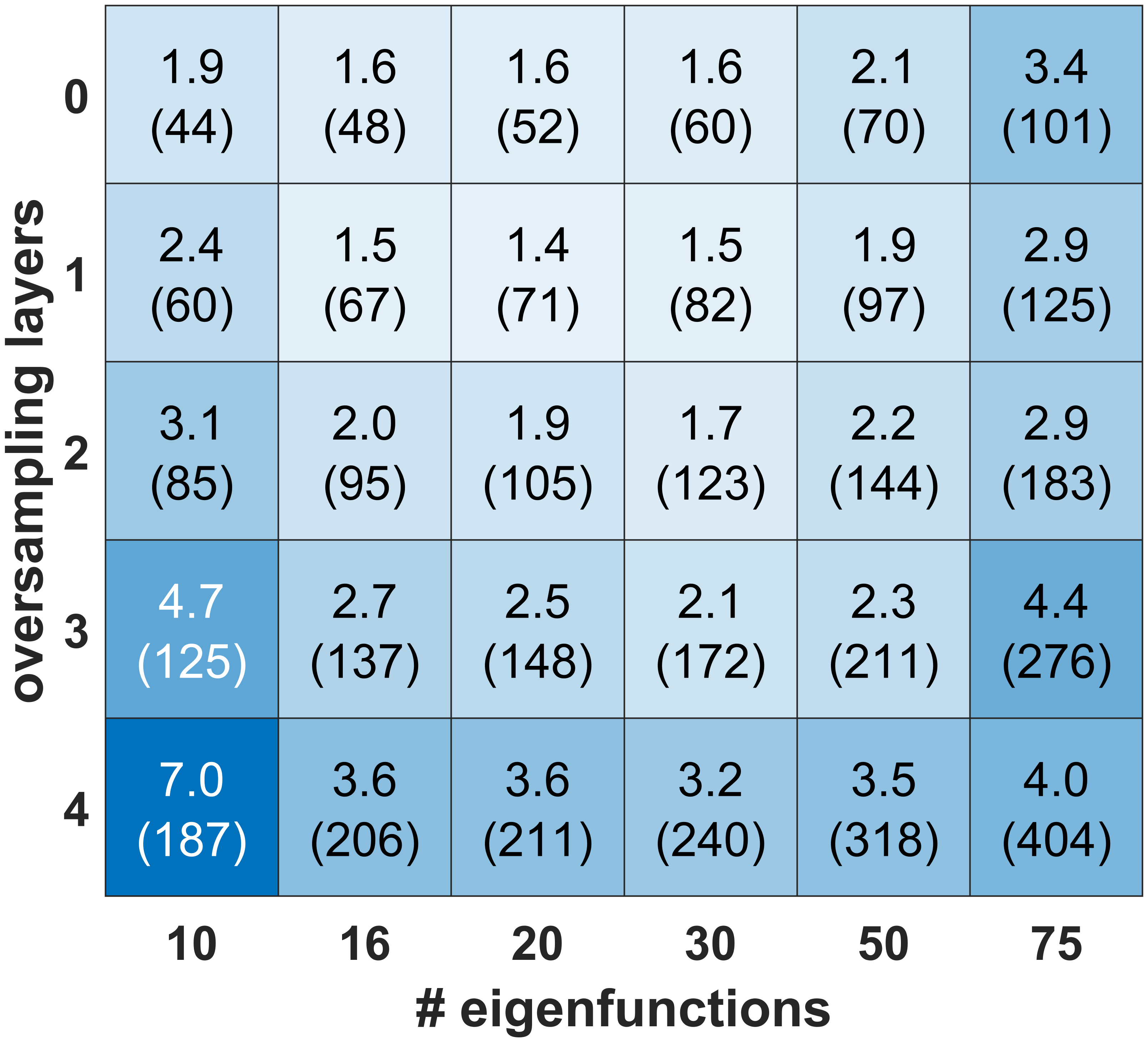}
    \caption{Composite C-spar experiment. Iteration counts (left) and computational time (right) of the preconditioned GMRES for various numbers of eigenfunctions and oversampling layers. On the right, the numbers outside the brackets represent the solver time and the ones in the brackets represent the time of the preconditioner setup.}
\end{figure}

\begin{figure} \label{linearElasticitySimple}
    \includegraphics[width=.48\textwidth]{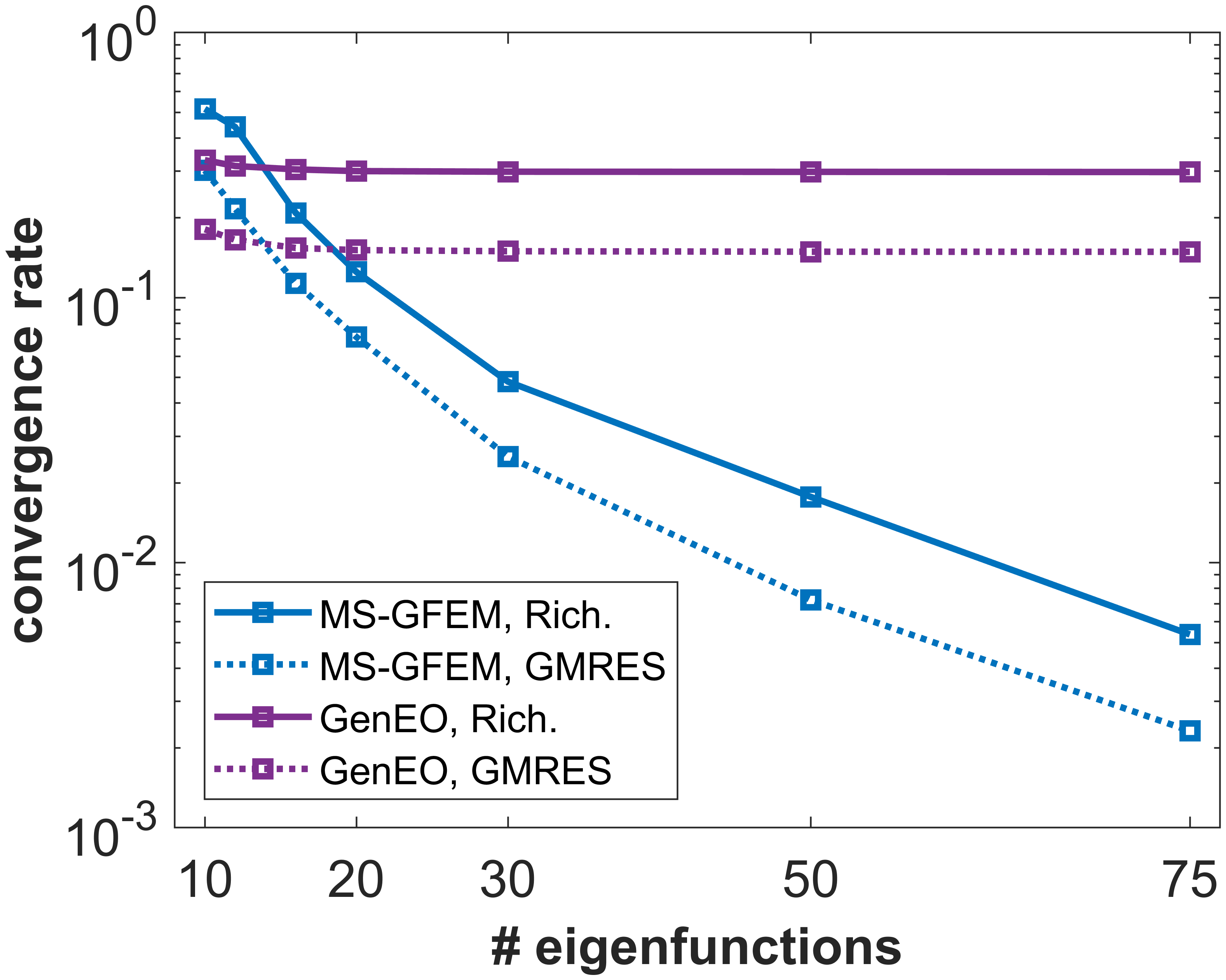}\hfill
    \includegraphics[width=.48\textwidth]{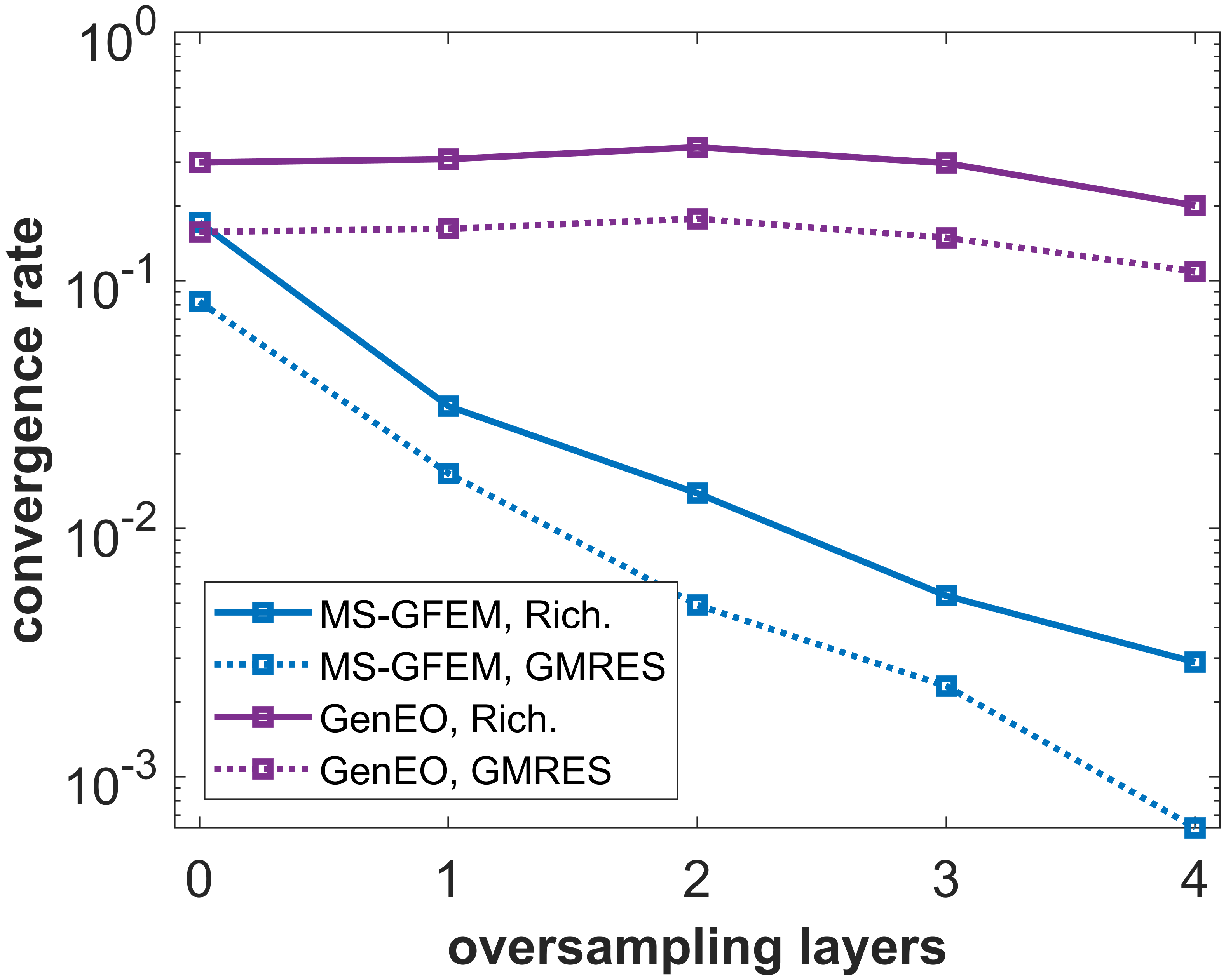}
    \\[\smallskipamount]
    \includegraphics[width=.48\textwidth]{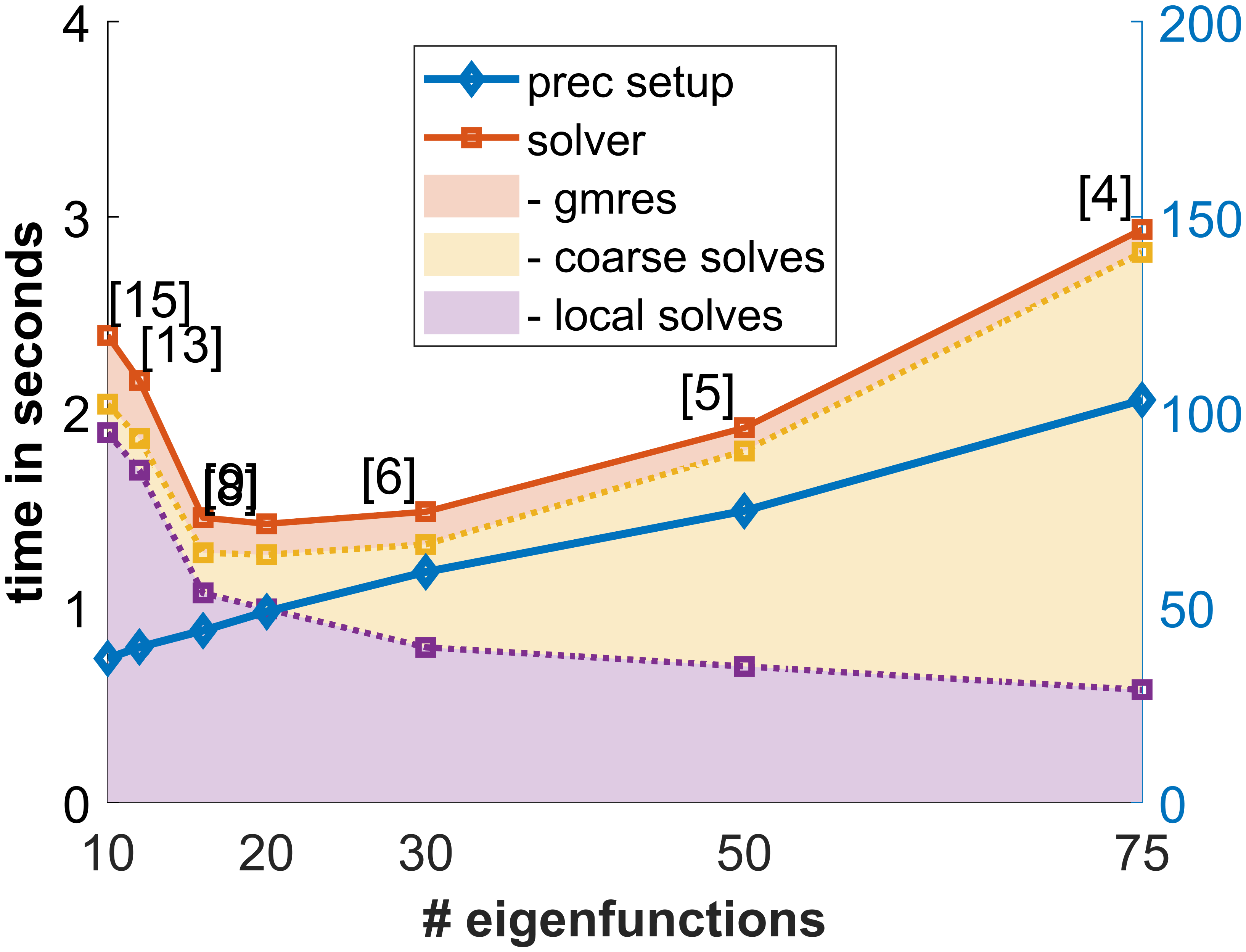}\hfill
    \includegraphics[width=.48\textwidth]{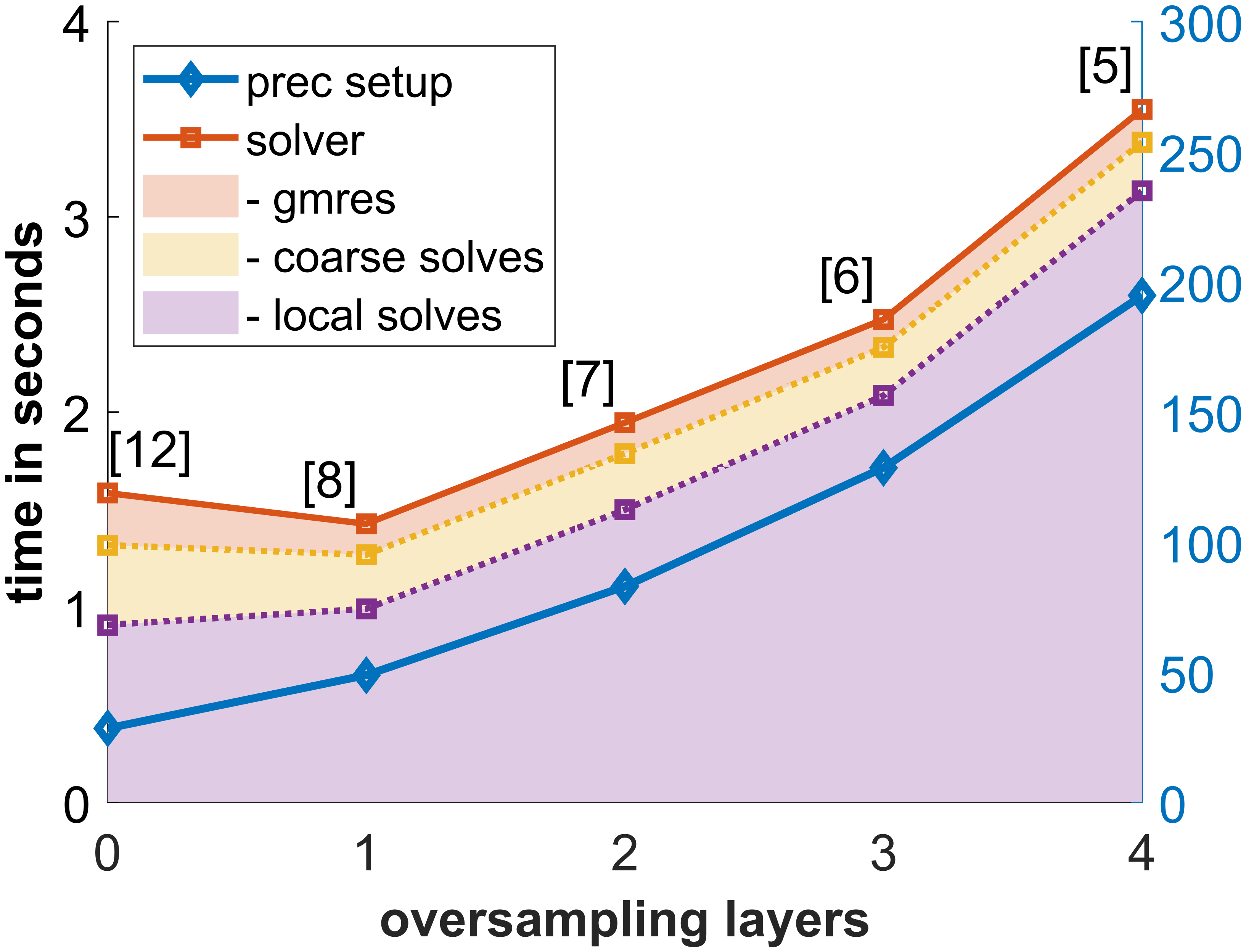}
    \caption{Composite C-spar experiment. Top: convergence rates of the MS-GFEM and GenEO preconditioners, with three oversampling layers used for the left and 75 eigenfunctions used per subdomain for the right. Bottom: Computational time of the preconditioned GMRES, with one oversampling layer used for the left and 20 eigenfunctions used per subdomain for the right.} 
\end{figure}

\Cref{heatmapsLinMSGFEM} gives the iteration counts and computational time for the method with various choices of oversampling and local space sizes. As expected, we observe a significant decrease in the iteration numbers when increasing oversampling and/or the number of local eigenfunctions. This inevitably brings a significant increase in the cost of the preconditioner setup. But as for the two-dimensional example, even ignoring the preconditioner setup, it does not pay to use a large amount of oversampling and local basis functions to minimize the iteration numbers -- the minimal solver time is attained with a small oversampling and a moderate number of local eigenfunctions. For this realistic application, the MS-GFEM preconditioner also outperforms the original method as a 'direct' solver in terms of computational time.

In \cref{linearElasticitySimple} (top), we compare the convergence rates of the MS-GFEM and GenEO preconditioners for increasing numbers of eigenfunctions and oversampling layers. The results clearly show that the convergence rate of the MS-GFEM preconditioner becomes much higher as we increase the number of eigenfunctions or oversampling layers, while that of the GenEO preconditioner remains nearly constant. This agrees well with our theory. \Cref{linearElasticitySimple} (bottom) shows the detailed computational time of the preconditioned GMRES for various choices of oversampling layers and eigenfunctions. We see that as we increase the number of eigenfunctions, the cost of the preconditioner setup grows mildly, but the cost of coarse solves increases significantly, even with reduced iteration numbers. This illustrates again that in practice, it is suitable to use a moderate number of local eigenfunctions. On the other hand, using a larger amount of oversampling greatly increase the cost of the preconditioner setup and of local solves, which can not be compensated for by the savings in the coarse solves. Therefore, in practice, a small size of oversampling is recommended.

\section{Conclusions}
We have formulated MS-GFEM, which was originally proposed as a multiscale discretization method, as an iterative solver and as a preconditioner, and performed a rigorous convergence analysis. Both the iterative solver and the preconditioner for GMRES converge at a rate of the error of the underlying MS-GFEM, and thus can be made 'arbitrarily' fast by means of its superior approximation properties. The theory developed in this paper is very general, applicable to various elliptic problems with highly heterogeneous coefficients. Numerical results for realistic applications confirm our theory and demonstrate that the iterative methods outperform the original MS-GFEM in terms of computational time.

Compared with the original MS-GFEM, the iterative methods allow the use of inexact eigensolvers, thereby providing a trade-off between the accuracy of eigensolves and the amount of work required for them. An important issue is then to determine what level of accuracy is needed to maintain the rapid convergence of the iterative methods. This will be a focus of future work. Moreover, we will extend the theory developed in this paper to indefinite problems, for example, Helmholtz problems with large wavenumbers.

\section{Acknowledgements}
The authors acknowledge support by the state of Baden-Württemberg through bwHPC and the German Research Foundation (DFG) through grant INST 35/1597-1 FUGG and through its Excellence Strategy EXC 2181/1 - 390900948 (the Heidelberg STRUCTURES Excellence Cluster).

\bibliographystyle{siamplain}
\bibliography{lit}

\end{document}